\newtheorem{thm}{Theorem}
\newtheorem*{thmA}{Theorem A}
\newtheorem*{thmB}{Theorem B}
\newtheorem{prop}[thm]{Proposition}
\newtheorem{lem}[thm]{Lemma}
\newtheorem{cor}[thm]{Corollary}
\numberwithin{equation}{section} \numberwithin{thm}{section}
\newtheorem{definition}[thm]{Definition}
\newtheorem{conjecture}[thm]{Conjecture}
\newcommand*\patchAmsMathEnvironmentForLineno[1]{%
  \expandafter\let\csname old#1\expandafter\endcsname\csname #1\endcsname
  \expandafter\let\csname oldend#1\expandafter\endcsname\csname end#1\endcsname
  \renewenvironment{#1}%
     {\linenomath\csname old#1\endcsname}%
     {\csname oldend#1\endcsname\endlinenomath}}%
\newcommand*\patchBothAmsMathEnvironmentsForLineno[1]{%
  \patchAmsMathEnvironmentForLineno{#1}%
  \patchAmsMathEnvironmentForLineno{#1*}}%
\newcommand{\cal}{\mathcal}
\newcommand{\rd}{{\mathbb R}^d}
\newcommand{\R}{{\mathbb R}}
\newcommand{\nor}{{\rm nor}\,}
\newcommand{\supp}{{\rm spt}\,}
\newcommand{\spt}{{\rm spt}\,}
\newcommand{\diam}{{\rm diam}\,}
\newcommand{\E}{{\mathbb E}}
\newcommand{\EE}{{\mathbf E}}
\newcommand{\FFF}{{\mathbf F}}
\newcommand{\CC}{{\mathbf C}}
\newcommand{\clos}{\operatorname{clos}}
\newcommand\inv{^{-1}}
\newcommand\setd{\partial}
\newcommand\mass{\operatorname{mass}}
\newcommand{\eps}{\varepsilon}
\newcommand{\restrict}[2]{\left. #1 \right|_{#2}}
\newcommand{\Rd}{\mathbb{R}^d}
\newcommand{\Rn}{\mathbb{R}^n}
\newcommand{\barsod}{\overline{SO(d)}}
\newcommand{\barson}{\overline{SO(n)}}
\def\en{\mathbb N}
\def\er{\mathbb R}
\def\C{\mathcal C}
\def\F{\mathcal F}
\def\D{\mathbb D}
\def\I{\mathbb I}
\def\E{\mathcal E}
\def\F{\mathcal F}
\def\J{\mathcal J}
\newcommand{\graph}{\operatorname{graph}}
\newcommand{\proj} {\nu}
\newcommand{\csubset}{\subset \subset}
\newcommand{\noreps}{{\rm nor}_\eps}
\newcommand{\vol}{\operatorname{Vol}}
\newcommand{\wth}{\operatorname{width}}
\newcommand{\z}{\operatorname{(zero-section)}}
\newcommand{\epi}{\operatorname{epi}}
\newcommand\DC{\text{DC}}
\newcommand\WDC{\text{WDC}}
\newcommand\MA{\text{MA}}
\newcommand\bdry{\operatorname{bdry}}
\begin{document}

\title[Kinematic formulas for WDC sets]{Kinematic formulas for sets defined by differences of convex functions}
\author{Joseph H.G. Fu, Du\v san Pokorn\'y, and Jan Rataj}
\thanks{JHGF was supported by NSF grant DMS-1406252. The second author is a junior researcher in the University Centre for Mathematical Modelling, Applied Analysis and Computational Mathematics (MathMAC). The second and third authors were also supported by grant GA\v CR~15-08218S}

\begin{abstract} The class $ \WDC(M)$  consists of all subsets of a smooth manifold $M$ that may be expressed in local coordinates as certain sublevel sets of DC (differences of convex) functions. If $M$ is Riemanian and $G$ is a group of isometries acting transitively on the sphere bundle $SM$, we define the invariant curvature measures of  compact \WDC~ subsets of $M$, and show that pairs of such subsets are subject to the array of kinematic formulas known to apply to smoother sets. Restricting to the case $(M, G) = (\Rn, \barson)$, this extends and subsumes Federer's theory of sets with positive reach in an essential way. The key technical point is equivalent to a sharpening of a classical theorem of Ewald, Larman, and Rogers characterizing the dimension of the set of directions of line segments lying in the boundary of a given convex body.

\end{abstract}

\email{joefu@uga.edu}
\email{dpokorny@karlin.mff.cuni.cz}
\email{rataj@karlin.mff.cuni.cz}

\keywords{Curvature measures, kinematic formula, positive reach, DC functions, WDC sets, normal cycle, Monge-Amp\`ere functions}
\subjclass[2010]{53C65, 52A20}
\date{\today}
\maketitle

\section{Introduction} 
The classical  {\it principal kinematic formula} (PKF) expresses, in terms of geometric quantities (intrinsic volumes) associated separately to compact subsets $A,B \subset \Rd$, the integral of the Euler characteristic of the intersection $A\cap \gamma B$ over $\gamma\in SO(d)$. However, it is necessary to restrict $A,B$ to have ``reasonable" smoothness: the original framework of Blaschke assumed $A,B$ to be convex, and subsequently Santal\'o and Chern \cite{santalo, chern} showed that the formula holds when $A,B$ are smooth domains. Both these cases were subsumed by  the theory of Federer \cite{cm}, treating the case of sets $A,B$ of {\it positive reach}. This theory has represented the state of the art for many years: the extensions by Hadwiger to sets from the ``convex ring" (the class of finite unions of compact convex bodies), and in \cite{zahle} to the class $U_{PR}$ of finite unions of sets with positive reach in general position, both rely on the analysis of the convex/positive reach case; and
the extension of \cite{fu94} to subanalytic sets relies on the very special finiteness properties that these sets enjoy (in fact the methods there apply also to sets definable with respect to any given o-minimal  structure \cite{vdD}).

It is natural to ask to characterize precisely the minimal amount  of smoothness needed to ensure that the PKF holds. This question turns out to be  subtle and elusive, and indeed it appears to evade all classical smoothness classes. The paper \cite {fu94} attempted to formulate  an answer using a notion of smoothness arising from the apparatus of the  proof of the PKF itself. The basic object of interest is the {\it normal cycle} $N(A)$ of $A \subset \Rd$, viz. an integral current associated to a singular subspace $A$ that stands in for the manifold of unit normals of a smooth set $A$. Closely related is the {\it differential cycle} $\D(f)$ of a  nonsmooth function $f:\Rd \to \R$, which is an integral current that stands in for the graph of the differential of a smooth $f$. A function $f$ that admits such a differential cycle is  called a {\it Monge-Amp\`ere (MA) function}. The general theory of \cite{fu94} posits that a set $A$ subject to the PKF should be given as a sublevel set of an MA function at a {\it weakly regular} value (cf. Definition \ref{def:weakly reg} below). Unfortunately, the theory found only limited success: in order to prove the PKF for pairs of such sets  it was necessary to introduce additional ad hoc hypotheses on the supports of $N(A)$ and $\D(f)$ (viz. the hypotheses on $\nor(f,0),\nor(g,0)$ in Theorem 2.2.1 of \cite{fu94}).

The main point of the present paper is to show that the general scheme of \cite{fu94} works completely, without these ad hoc devices, in the case of the \WDC~ sets introduced in \cite{PR}. In this last work it was shown first of all that any \DC~  function $f$ (i.e. a function expressible locally as $f = g-h$, where $g,h$ are convex) is \MA. A set $A$ is \WDC~ if it may be expressed as a sublevel set of a \DC~ function $f$ at a weakly regular value. In the present paper, by sharpening  a theorem of Ewald, Larman, and Rogers \cite{ELR}, and using  a construction of Pavlica and Zaj\'i\v cek \cite{PZ},  we show that the unwanted ad hoc hypotheses are always fulfilled in this setting.

Using a characterization of sets with positive reach due to Kleinjohann and Bangert (Theorem \ref{thm:klein-bang} below), it is easy to see that any set with positive reach is a \WDC~ set. Since \WDC~ is closed under finite unions and intersections in general position, it follows that any $U_{PR}$ set (in the original sense of \cite{zahle}) is also \WDC.  Thus the theory developed here subsumes that of \cite{cm, zahle}, and indeed ventures well beyond it, covering for example in a systematic way also the case   of general convex hypersurfaces.

%
%

\subsection {Plan of the paper}  In fact there are many kinematic formulas beyond the PKF, which may be treated together in terms of integration of invariant forms over the normal cycle. It is known (\cite{fu90}, \cite{ale-be}) that if $M$ is Riemannian and admits a Lie group $G$ of isometries that acts transitively on the tangent sphere bundle $SM$ (i.e. $(M,G)$ is a {\bf Riemannian isotropic space}) then kinematic formulas exist for pairs of subsets of $M$ belonging to any of the classical integral geometric regularity classes above \cite{fu90}. We will carry out our analysis of ~\WDC~ sets in this same general context.

Section \ref{sect:generalities} is devoted to  notions that will figure importantly in the subsequent discussion. In particular we recall from \cite{PR} the definitions of DC and MA functions and the key inclusion $\DC \subset \MA$. We give a new result (Theorem \ref{thm:ma addition}) describing the differential cycle of the sum of two \MA~ functions in general position on a homogeneous space; this formula is key to the subsequent proof of the kinematic formulas.
 We recall also the definition of a WDC~ set $A$ as the level set $A= f\inv(0)$ of a nonnegative DC~ function $f$ for which $0$ is a weakly regular value. In this case we say that $f$ is a DC {\bf aura} for $A$, and we recall the main result of \cite{PR}, giving the construction of the conormal cycle of $A$ in terms of $f$ and showing that it is independent of the choice of aura $f$.

The main result (Theorem \ref{WDCbundle}) of Section 3  states that in this setting the sets $\noreps(f)$, defined as the set of all elements of  the sphere bundle $SM$) that arise as normalized Clarke differentials of $f$ at $f=0$, has locally finite $(d-1)$-dimensional Minkowski content, where $d = \dim M$. Since Minkowski content--- unlike Hausdorff measure--- behaves well under Cartesian products, this fact is the key to establishing the support condition needed to prove the kinematic formulas.
The proof of Theorem \ref{WDCbundle} relies ultimately on a fundamental construction of Pavlica and Zaj\'i\v cek \cite{PZ} relating the support elements of the graph of a DC function $f:\Rd\to \R$ to the set of  support hyperplanes $P$ common to two different convex subsets $A,B\subset \R^{d+1}$. A  lemma of Ewald-Larman-Rogers \cite{ELR} states that the latter set has the expected Hausdorff dimension; this is the key lemma for their well-known theorem stating that the set of directions of line segments lying in the boundary of a given convex body in $\Rd$ has Hausdorff dimension at most $d-2$. As shown in \cite{PR}, this argument is enough to show that the set of tangent hyperplanes $P$ to the graph of the DC function $f$ has the expected Hausdorff dimension $d$, which in turn is enough to show that a WDC set admits a normal cycle in the sense of \cite{fu94}, Theorem 3.2. 

However, necessary for our main result (Theorem B below) is the stronger assertion that the set of pairs $(x,P)$ such that $P$ is a tangent plane for the graph of $f$ at $(x,f(x))$ has the same  dimension $d$, and moreover that this dimension may be evaluated in the sense of Minkowski content. This result (Lemma \ref{duality})  follows from a refinement of the Pavlica-Zaj\'i\v cek result: given convex $A,B\subset \Rd$, the set of pairs $(x-y,P)$ such that $P$ is a support plane both for $A$ at $x$ and for $B$ at $y$ has  Minkowski dimension $d$. 
 As a byproduct of this analysis we also arrive at the following enhanced version of the theorem of Ewald-Larman-Rogers:
\begin{thmA} Let $K \subset \Rd$ be closed and convex. Denote by $T_K$ the set of pairs
$
(v,w) \in S^{d-1}\times S^{d-1}$ with the property that there exists a nondegenerate segment $\tau \subset \partial K$ with direction $v$ and lying in a supporting hyperplane of $K$ with outward normal direction $w$. Then $T_K$ has $\sigma$-finite $(d-2)$-dimensional Minkowski content.
\end{thmA}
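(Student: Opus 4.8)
The plan is to reduce Theorem A to the refinement of the Pavlica--Zaj\'i\v cek construction recorded in Lemma~\ref{duality}, exactly as the introduction advertises, rather than reproving anything from scratch. Given a closed convex body $K \subset \Rd$, the ridge structure of $\partial K$ is governed by the way $K$ sits inside $\Rd$ together with a supporting halfspace at infinity, so the first step is to set up the right pair of convex sets in one dimension higher. Following \cite{PZ}, I would realize the support structure of $\partial K$ as the common support planes of two convex bodies $A, B \subset \R^{d}$ obtained from $K$: concretely, view $\partial K$ locally as the graph of a DC function $f$ on a hyperplane $H \cong \R^{d-1}$ (the convexity of $K$ makes $\partial K$ locally a graph of a convex, hence DC, function in suitable coordinates), and apply the Pavlica--Zaj\'i\v cek splitting $f = g - h$ with $g,h$ convex to pass to $A = \operatorname{subgr} g$, $B = \operatorname{supgr} h$ in $\R^{d}$. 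A nondegenerate segment $\tau \subset \partial K$ in direction $v$ lying in a supporting hyperplane with outer normal $w$ then corresponds, under this dictionary, to a common supporting hyperplane $P$ of $A$ and $B$ together with a line segment in the intersection $\partial A \cap \partial B \cap P$ whose direction records $v$ and whose normal records $w$.

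The second step is the dimension count. By Lemma~\ref{duality}, the set of pairs $(x - y, P)$ with $P$ a common support plane of $A$ at $x$ and of $B$ at $y$ has Minkowski dimension $d$; more is true, namely the full incidence set carries locally finite $d$-dimensional Minkowski content in the ambient product. The set $T_K$ is the image of (a subset of) this incidence set under a Lipschitz map: from the pair $(x-y, P)$ one reads off $w$ as the (normalized) normal direction of $P$, and $v$ as the direction of the segment of $\partial A \cap \partial B$ inside $P$ through the relevant point — and passing from ``a plane $P$ and the common-boundary segment it cuts out'' to ``its direction $v$'' drops dimension by one, since fixing the segment's line leaves only a one-parameter family of translates. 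So $T_K$ is, up to a locally Lipschitz map, the image of a $d$-dimensional (in Minkowski content) set after a one-dimensional reduction, giving $(d-2)$-dimensional Minkowski content. Since Lipschitz images do not increase Minkowski content and a countable exhaustion of $\partial K$ by compact pieces (on each of which the local graph representation and the PZ splitting are valid) yields the $\sigma$-finiteness, the bound follows.

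The main obstacle I anticipate is not the dimension arithmetic but making the correspondence between ``segments in $\partial K$ with a prescribed support normal'' and ``common support planes of $A$ and $B$'' genuinely bijective and bi-Lipschitz on compact pieces, uniformly in the choice of local chart. Two points need care. First, a single segment $\tau \subset \partial K$ may lie in more than one supporting hyperplane (if $\tau$ sits in a higher-dimensional face), so the map to $T_K$ is really defined on a set of (segment, hyperplane) pairs and one must check that the extra hyperplane directions $w$ do not inflate the Minkowski content beyond $d-2$ — this is handled by noting that a face of dimension $k \ge 2$ contributes segments only in a $k$-dimensional set of directions $v$ while its normal cone has dimension $\le d-1-k$, so the product stays within dimension $d-1$, and the incidence-set bookkeeping in Lemma~\ref{duality} already accounts for this. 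Second, at points where $\partial K$ fails to be a graph in the fixed direction (vertical tangent planes), one must cover $\partial K$ by finitely many charts with different axis directions and check the estimates are stable under the orthogonal changes of coordinates relating them; this is routine but must be stated. Once the bi-Lipschitz dictionary is in place on each compact chart piece, Theorem A is a direct corollary of Lemma~\ref{duality} together with the elementary fact that Minkowski content behaves monotonically under Lipschitz maps and countably subadditively under the chart decomposition.
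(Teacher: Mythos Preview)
Your proposal misidentifies the operative lemma and contains a genuine gap in the dimension count. The paper's proof of Theorem~A does not go through Lemma~\ref{duality} at all; it applies Lemma~\ref{lem:sharp elr} directly, and in one dimension lower. Concretely: reduce to compact $K$; choose two distinct parallel hyperplanes $H,H'$ meeting $K$, set $A:=K\cap H$ and $B:=(K\cap H')-z$ (with $z\perp H$, $H'=H+z$), both convex bodies in $H\cong\R^{d-1}$. Any boundary segment of $K$ that meets both $H$ and $H'$, with direction $v$ and outer normal $w$, produces a point of $\Sigma_{A,B}\cup\Sigma_{B,A}\subset H\times S^{d-2}$, and the explicit map sending $(x-y,u)$ back to $(v,w)\in S^{d-1}\times S^{d-1}$ is checked to be Lipschitz. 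Lemma~\ref{lem:sharp elr} in $\R^{d-1}$ then gives finite $(d-2)$-content for each such piece $T_K^{H,H'}$; since every nondegenerate boundary segment is captured by some pair $(H,H')$ from a countable family, the $\sigma$-finiteness follows.

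Your route has two real problems. First, you invoke Lemma~\ref{duality} for a statement it does not make: that lemma bounds the Minkowski content of $\graph\partial f$, i.e.\ of pairs $(x,\xi)$ with $\xi\in\partial f(x)$; it says nothing about pairs $(x-y,P)$ with a common support plane at two distinct points. Recovering a segment direction $v$ requires two boundary points sharing a normal, and that incidence structure is precisely what Lemma~\ref{lem:sharp elr} controls, not Lemma~\ref{duality}. (Your Pavlica--Zaj\'i\v cek splitting is also vacuous here: $\partial K$ is locally the graph of a \emph{convex} function, so $f=g-h$ with $h\equiv 0$.) Second, and more seriously, even if one had finite $(d-1)$-content for the relevant incidence set in $\R^d\times S^{d-1}$, the asserted ``dimension drop by one'' under the map to directions is not justified: Lipschitz images do not lose a dimension of Minkowski content merely because fibers are one-dimensional. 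For instance, a subset of $(\R^d\setminus\{0\})\times S^{d-1}$ of finite $(d-1)$-content can project radially onto a set in $S^{d-1}\times S^{d-1}$ of full $(d-1)$-content. The paper's device of slicing by two fixed parallel hyperplanes is exactly what replaces this illegitimate projection step: it pins down a definite scale along the segment and pushes the whole problem into $\R^{d-1}$, where Lemma~\ref{lem:sharp elr} already gives the correct exponent $d-2$ without any further reduction.
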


In Section \ref{sect:kinematic} we prove our main theorem, that the kinematic formulas described in \cite{fu90} hold for pairs of WDC sets in an isotropic space $(M,G)$ (classical work of Hartman \cite{hartman} implies that the space of DC functions is stabilized by diffeomorphisms, hence this notion, and the notion of WDC set, make sense on a smooth manifold). In order to state this precisely, let $d$ be the dimension of $M$. Since $(M,G)$ is Riemannian isotropic, it is clear that the space of $G$-invariant differential forms on $SM$ is canonically isomorphic to the subspace of $G_{\bar o}$-invariant elements of the exterior algebra $\bigwedge^*T_{\bar o} (SM)$, where $G_{\bar o} \subset G$ is the stabilizer of an arbitrary point  $\bar o\in SM$. In particular, this space has finite dimension.  Each such form $\beta$ of degree $d-1$ gives rise to a $G$-invariant {\bf curvature measure} on $M$, i.e. an object that associates to each WDC set $A \subset M$ the signed measure $\Phi_\beta(A,\cdot)$ given by
$$
\Phi_\beta(A,E):= \int_{N(A)\with \pi\inv( E)} \beta,
$$
where $\pi:SM \to M$ is the projection. This measure may alternatively be viewed as a linear functional on the space of bounded Borel measurable functions $\phi$ given by 
$$
\Phi_\beta(A,\phi):= \int_{N(A)}\pi^*\phi\wedge \beta.
$$
Denote the space of all such $\Phi_\beta$ by $\C^G$.  For $\Phi, \Psi \in \C^G$ and $A,B \in \WDC(M)$ we put 
$$
(\Phi\otimes \Psi)(A,\phi;B,\psi) := \Phi(A,\phi) \Psi(B,\psi)
$$
and extend to all of $\C^G \otimes \C^G$ by bilinearity.

\begin{thmB}\label{thm:main}  Let $(M,G)$ be a Riemannian isotropic space, and put $d\gamma$ for the Haar measure on $G$ that projects to the Riemannian volume of $M$.
\begin{enumerate}
\item  If $A,B \in \WDC(M)$ then $A \cap \gamma B \in \WDC(M)$ for a.e. $\gamma \in G$.
\item \label{item:2} There exists a linear map 
$$
K: \C^G \to \C^G \otimes \C^G
$$
such that, for any compact $A,B\in \WDC(M)$ and bounded Borel measurable functions $\phi,\psi: M\to \R$ 
\begin{equation}\label{eq:main}
\int_G \Phi(A \cap \gamma B, \phi \cdot(\psi \circ \gamma\inv)) \, d\gamma = K(\Phi)(A,\phi;B,\psi).
\end{equation}
\end{enumerate}
\end{thmB}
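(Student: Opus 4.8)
The plan is to reduce the entire statement to the normal‑cycle machinery of \cite{fu94} together with the new product formula (Theorem~\ref{thm:ma addition}) and the Minkowski‑content estimate (Theorem~\ref{WDCbundle}). First I would establish part (1). Let $f,g$ be DC auras for $A,B$ respectively; then for $\gamma\in G$ the function $f + g\circ\gamma^{-1}$ is again DC (Hartman's theorem, since the $G$‑action is by diffeomorphisms) and is a nonnegative function whose zero set is exactly $A\cap\gamma B$. The issue is only whether $0$ is a \emph{weakly regular} value of this sum. Here I would invoke Theorem~\ref{WDCbundle}: the sets $\noreps(f)$ and $\noreps(g)$ have locally finite $(d-1)$‑dimensional Minkowski content in $SM$, so by the good behavior of Minkowski content under the obvious fibered product and a Fubini/coarea argument over $G$, for a.e.\ $\gamma$ the images of $\noreps(f)$ and $\noreps(g\circ\gamma^{-1})$ meet the antipodal‑fiber diagonal of $SM\times_M SM$ in an $\Ha^{d-1}$‑null, indeed Minkowski‑null, set; this is precisely the ``general position'' hypothesis under which weak regularity of the sum value is preserved. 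Thus $A\cap\gamma B\in\WDC(M)$ for a.e.\ $\gamma$.

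Next I would build the conormal/normal cycles and identify the integrand of the kinematic integral with a current‑level operation. For a.e.\ $\gamma$, the main result of \cite{PR} gives $N(A\cap\gamma B)$ as a current constructed from the aura $f + g\circ\gamma^{-1}$, and independence of the aura. Theorem~\ref{thm:ma addition} expresses the differential cycle of a sum of two MA functions in general position on a homogeneous space as a fiber‑product of the two differential cycles; specializing to auras and passing to the ``zero‑level'' slices that define the conormal cycle, this yields a formula of the shape
\begin{equation}
N(A\cap\gamma B) \;=\; N(A)\;\boxtimes_\gamma\; N(\gamma B)
\end{equation}
for a.e.\ $\gamma$, where $\boxtimes_\gamma$ is the fibered intersection of cycles in $SM$ pulled back along the two projections, exactly as in the smooth and positive‑reach cases. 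The support condition needed to make $\boxtimes_\gamma$ well defined for a.e.\ $\gamma$ is again supplied by the Minkowski‑content bound of Theorem~\ref{WDCbundle} (this is the whole reason Minkowski content, rather than Hausdorff measure, is the right notion: it multiplies correctly under products).

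Finally I would run the algebraic part of the argument exactly as in \cite{fu90, fu94}. Given $\Phi = \Phi_\beta\in\C^G$ with $\beta$ a $G$‑invariant $(d-1)$‑form on $SM$, one has
\begin{equation}
\int_G \Phi_\beta(A\cap\gamma B,\phi\cdot(\psi\circ\gamma^{-1}))\,d\gamma
=\int_G\!\int_{N(A)\boxtimes_\gamma N(\gamma B)} \pi^*(\phi\cdot(\psi\circ\gamma^{-1}))\wedge\beta \;d\gamma .
\end{equation}
Swapping the order of integration (justified by the Minkowski‑content bounds, which give the requisite mass estimates uniform enough to apply Fubini for currents), the inner integral over $G$ of the fibered‑intersection current against the pulled‑back form becomes, pointwise over $N(A)\times N(B)$, an integral of $G$‑translates of $\beta$ against the diagonal; by $G$‑invariance and transitivity on $SM$ this ``averaged form'' decomposes as a finite sum $\sum_i \beta_i'\otimes\beta_i''$ of invariant forms on $SM\times SM$ of complementary bidegrees, with coefficients depending only on $(M,G)$ and $\beta$. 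This is where the finite‑dimensionality of the invariant exterior algebra $\bigwedge^* T_{\bar o}(SM)^{G_{\bar o}}$ is used. Setting $K(\Phi_\beta) := \sum_i \Phi_{\beta_i'}\otimes\Phi_{\beta_i''}$ and extending linearly gives the desired map, and unwinding the definitions yields \eqref{eq:main}. The main obstacle throughout is not the algebra but the analytic input: verifying that the fibered‑intersection current $N(A)\boxtimes_\gamma N(\gamma B)$ is defined and equals $N(A\cap\gamma B)$ for a.e.\ $\gamma$, and that the double integral may be interchanged — both of which hinge on Theorem~\ref{WDCbundle} and the product formula Theorem~\ref{thm:ma addition}, i.e.\ on the replacement of Hausdorff‑dimension bounds by Minkowski‑content bounds. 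I expect the bulk of Section~\ref{sect:kinematic} to be devoted precisely to making this interchange and this fibered‑product identity rigorous.
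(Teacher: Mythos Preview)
Your plan is essentially the paper's: part~(1) is Proposition~\ref{prop:max dc}, proved exactly via the Minkowski-content bound of Theorem~\ref{WDCbundle} (the product $\noreps f\times s\,\noreps g$ has finite $(2d-2)$-content, so the bad set in $G$ has measure zero); part~(2) combines the formal kinematic formula of Proposition~\ref{prop:formal kf} (your ``algebraic part from \cite{fu90}'') with Theorem~\ref{thm:J=N reg}, which identifies $N(A\cap\gamma B)$ with an explicit current built from $N(A),N(B)$ for a.e.\ $\gamma$, the latter proved via Theorem~\ref{thm:ma addition} and the conic-cycle description of Proposition~\ref{prop:cone structure}.

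Two small corrections to your picture. First, your $\boxtimes_\gamma$ is not a single ``fibered intersection'': the paper's concrete avatar is the three-term current $\mathcal J(A,B,\gamma)$ of \eqref{eq:nor of intersect}, consisting of $N(A)\with\pi^{-1}(\gamma B)$, $N(\gamma B)\with\pi^{-1}(A)$, and a slice of $N(A)\times N(B)\times_{\mathcal E}\mathbf C$ where $\mathbf C$ is the connecting current of geodesic arcs in the sphere fibers---you will need this decomposition explicitly, and the identification $\mathcal J=N(A\cap\gamma B)$ goes through the conic cycles $\vec N$ rather than a direct ``zero-level slice''. Second, the Minkowski-content estimate is used \emph{only} to get the measure-zero exceptional set in part~(1); the interchange of integration in the formal kinematic formula is just Federer's slicing theorem for integral currents and needs no further support hypothesis, so you should not expect that step to be where the analytic difficulty lies.
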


Conclusion \eqref{item:2}  may be restated in more prosaic terms as follows:

{\sl Let $\beta_1,\dots, \beta_N$ be a basis for the vector space of $G$-invariant differential forms of degree $d-1$ on $SM$. Then there exist constants $c_{ij}^k$ such that given any compact sets $ A,B \in \WDC(M)$ and bounded Borel measurable functions $\phi,\psi:M \to \R$, then
\begin{align}
\notag\int_G \left(\int_{N(A\cap \gamma \inv B)} \pi^*(\phi \cdot (\psi \circ \gamma\inv))\wedge \beta_k \right)\, d\gamma &= \sum_{i,j} c^k_{ij} \int_{N(A)} \pi^*\phi \wedge \beta_i \int_{N(B)}\pi^*\psi\wedge \beta_j \\
\label{eq:main kf}& +\int_A \phi \cdot \int_{N(B)} \pi^*\psi \wedge \beta_k \\
\notag & + \int_B\psi \cdot \int_{N(A)} \pi^*\phi\wedge \beta_k , \    k=1,\dots,N.
\end{align}
}

For $A,B$ of positive reach and $(M,G) = (\Rn ,\barson)$ this is the classical kinematic formula of Federer \cite[Theorem~6.11]{cm}. 

\subsection{Acknowledgments} It is a pleasure to thank L. Zaj\'i\v cek for helpful conversations.

\section{Background}\label{sect:generalities}

\subsection{Generalities and notation}\label{sect:cosphere} 
\subsubsection{General concepts}
The volume of the unit ball in $\R^d$ is denoted 
$$
\omega_d:=\pi^{\frac d2}/\Gamma\left(\frac d2+1\right).
$$
We put $\Pi_L$ for the orthogonal projection onto an affine subspace $L\subset \Rd$.

We write $A \csubset B$ to mean that $A$ is a compact subset of $B$.

If $E$ is a Cartesian product $ A\times B$ or $B\times A$,  then we use $\pi_A: E \to A$ to denote the projection.



\subsubsection{Currents} We generally follow the notation and terminology of \cite{gmt}, with some minor deviations. A {\bf current} $T$ of dimension $k$ in the smooth manifold $M$ is a linear functional $T$ on the space $\Omega_c^k(M)$ of compactly supported smooth differential forms on $M$, continuous with respect to $C^\infty$ convergence with uniformly compact support. 
The {\bf support} of $T$ is denoted $\spt T$.
The pairing of a current $T$ against a differential form $\phi$ will usually be denoted $\int_T \phi$. A current $T$ is {\bf representable by integration} if there exist a Radon measure $\lVert T \rVert$ and a  Borel measurable $k$-vector field $\vec T$ on $M$ such that 
$$
\int_T \phi = \int_M \langle \vec T_x, \phi_x\rangle \, d\lVert T \rVert x, \quad \phi \in \Omega_c^k(M).
$$

If $M$ is an oriented $C^1$ manifold, we will often conflate a measurable subset $A \subset M$ with the current defined by integration over $A$.

Among all currents the group 
$\mathbb I_k(M)$  of {\bf integral currents} (\cite{gmt}, 4.1.24) of dimension $k$ in manifold $M$ enjoys many special properties.
 Any integral current $T$ may be pushed forward by a proper Lipschitz map $f:M \to N$ to yield a current $f_*T \in \mathbb I_k(N)$ via the formula
$$
\int_{f_*T}\phi := \int_T f^*\phi.
$$

We will rely heavily on the Federer-Fleming theory of {\bf slicing}, described in detail in Section 4.3 of \cite{gmt}. The slice of $T$ by such $f$ at $y\in N$ is denoted $\langle T,f,y\rangle$. If $T$ is given by integration over a smooth submanifold $V$, and $y$ is a regular value of $f$, then this slice is simply the current given by integration over the appropriately oriented intersection of $V$ with $f\inv(y)$. If $S \in \I_k(Y)$ for some manifold $Y$, recall that the conventions of \cite{gmt}, Section 4.3, imply that for $y\in N$
\begin{align}\label{eq:slice orientation} 
\pi_{Y*} \langle N\times S, \pi_N, y\rangle & =  S,\\
\notag \pi_{Y*} \langle S\times N, \pi_N, y\rangle &=  (-1)^{k \dim N} S,
\end{align}
where the Cartesian product of currents is defined as in \cite{gmt}, 4.1.8.
One particularly important fact is {\bf commutativity of pushforward and slicing} (\cite{gmt}, Theorem 4.3.2 (7)):
if 
$$
\begin{CD}
M @>g>> N @>h>> L
\end{CD}
$$
are Lipschitz maps, and $T \in \mathbb I_*(M)$ then
 for a.e. $y \in L$ 
\begin{equation}\label{eq:funct of slice}
 g_* \langle T, h\circ g , y \rangle =  \langle  g_*T, h , y \rangle.
\end{equation}

The boundary of a current $T$ is denoted by $\partial T$, and defined by the formula
$$
\int_{\partial T}\phi:= \int_{T}d\phi,
$$
and the boundary of a Cartesian product is
$$
\partial (S\times T) = \partial S \times T + (-1)^{\dim S} S \times \partial T.
$$
Slicing behaves naturally with respect to the boundary operation: if $g:M\to N, \dim N = n$ then for a.e. $y \in N$ (cf. \cite{gmt}, p. 437)
\begin{equation}\label{eq:slice boundary}
\partial \langle T, g, y\rangle = (-1)^n  \langle \partial T, g, y\rangle .
\end{equation}

If $T$ is a current of dimension $k$ and $\phi$ is a differential form of degree $j\le k$ then $T\with \phi$ is the current of dimension $k-j$ defined by
$$
\int_{T\with \phi}\psi= (T\with \phi)(\psi) := \int_T \phi\wedge \psi.
$$
If $T$ is integral (in particular, representable by integration), the  formula above makes sense even when $\phi$ is merely bounded and Borel measurable. In particular, if $A\subset M$ is a Borel subset we set
$$ \int_{T\with A}\psi = (T\with A)(\psi):=\int_T (\psi\cdot 1_A),$$
where $1_A$ denotes the characteristic function of $A$.

\subsubsection{Manifolds and bundles}\label{sect:manifold conventions} If $M$ is a smooth manifold then we denote by $S^*M$ its cosphere bundle. The elements of the total  space  of $S^*M$ may be thought of either as rays lying in the fibers of the cotangent bundle with endpoint at $0$, or else as oriented hyperplanes through the origin within the tangent spaces $T_xM$. 
For convenience we will sometimes make use of an arbitrarily chosen $C^1$ $1$-homogeneous length function $\ell:T^*M \to [0,\infty)$,  positive off of the zero section (for example, one induced by a Riemannian metric on $M$). In this case we may also think of $S^*M$ as the subspace  $\ell\inv(1) \subset T^*M$.
We put 
$$
\proj:T^*M \setminus \z \to S^*M
$$ 
for the canonical projection.
If $M$ is Riemannian then $SM \subset TM$ is the bundle of unit tangent vectors.
 Abusing notation, we put $\pi$ for the projection of any of the bundles  $TM$, $T^*M$, $SM$, $S^*M$ to $M$, and $\proj:TM \setminus \z \to SM$ for the normalization map.
 
 We will frequently consider the case of a homogeneous space $M = G/G_o$, where $G$ is a finite dimensional Lie group and $G_o$ is the stabilizer of a base point $o\in M$. We orient $G,G_o,M$ so that whenever $M\supset U \owns x\mapsto \omega_x\in G$ is a smooth local section, $\omega_xo = x$, the map 
\begin{equation}\label{eq:orientation convention}
U \times G_o \owns (x,\bar \gamma)\mapsto \bar \gamma \omega_x \inv
\end{equation}
is an orientation-preserving diffeomorphism onto the corresponding open subset of $G$--- there exist four different choices of such a system of orientations, but the distinctions among them will be immaterial.
We denote by $\mathcal F$ the bundle over $M \times M$ with fiber $G_o$ and total space
 $$
\FFF:= M\times M\times_{\mathcal F} G_o:= \{(x,y,\gamma) \in M\times M\times G: \gamma y = x\} .
 $$
We orient these spaces consistently with the local product structure, i.e. if $\omega_x,  \omega_y$ are local sections  as above defined on open subsets $U,V$ then
\begin{equation}\label{eq:bundle orientation}
(x,y, \bar \gamma) \mapsto (x,y, \omega_x \bar \gamma \omega_y\inv)
\end{equation}
yields an orientation-preserving local diffeomorphism  $U \times V \times G_o \to \FFF$. One may easily check that with these conventions the diffeomorphism
\begin{equation}\label{eq:orientation of F}
M \times G\to M\times M\times_{\mathcal F} G_o, \quad (x,\gamma) \mapsto (x, \gamma\inv x, \gamma)
\end{equation}
also preserves orientations. Abusing notation, we will use the same notation to denote pullbacks of $\F$ by maps into $M\times M$, e.g. 
 $$
 TM \times TM \times_{\F} G_o:= \{(\xi,\eta,\gamma) \in TM\times TM\times G: \gamma \pi(\eta) = \pi(\xi)\} ,
 $$
 or, if $S,T$ are currents living in $TM$, then $S \times T \times_{\F} G_o$ is the current given in an orientation-preserving local trivialization as the Cartesian product.

 
 We will put $\Gamma$ for the projection of $\FFF\subset M\times M \times G$ to the third factor, and $X,Y$ the projections to the  respective $M$ factors. We will abuse notation by using the same symbols also to denote the maps on pullbacks of $\mathcal F$ obtained by precomposing with the associated maps into $\FFF$.

By the orientation conventions \eqref{eq:slice orientation} and \eqref{eq:orientation of F},
\begin{equation}\label{eq:Pi1}
X_*\langle \FFF, \Gamma, \gamma \rangle = (-1)^{d\dim G} M.
\end{equation}
Define the involutions 
\begin{align*}
\iota:\FFF \to \FFF, \ &(x,y,\gamma) \mapsto (y,x,\gamma\inv) , \\
I:G \to G,  \  &\gamma \mapsto \gamma \inv,
\end{align*}
both of parity $(-1)^{\dim G } = (-1)^{d + \dim G_o}$.
 Conclusions (6) and (7) of \cite{gmt}, Theorem 4.3.2 now yield
\begin{align}
\notag Y_*\langle \FFF, \Gamma, \gamma \rangle &= (-1)^{\dim G} Y_*\langle \FFF,I\circ \Gamma, \gamma\inv \rangle \\
\notag &= (-1)^{\dim G} Y_*\langle \FFF, \Gamma\circ \iota, \gamma\inv \rangle \\
\label{eq:Pi2}  &= (-1)^{\dim G} Y_*\iota_*\langle \iota_* \FFF, \Gamma, \gamma\inv \rangle \\
\notag    &=  (Y\circ\iota)_*\langle  \FFF, \Gamma, \gamma\inv \rangle \\
\notag        &=  X_*\langle  \FFF, \Gamma, \gamma\inv \rangle \\
\notag        &= (-1)^{d\dim G} M.
\end{align}

 If $M$ is Riemannian and $G$ acts on $M$ by isometries then
we may endow $G$ with an invariant volume form $d\gamma$ compatible with the given orientation, such that the corresponding volume measure on $G$ projects to the positively oriented Riemannian volume form $d\vol_M$ of $M$ (i.e. $d\gamma$ is the product of the Riemannian volume of $M$ with the invariant probability volume form on the compact subgroup $G_o$). Let $\pi_{G_o*}: \Omega^*(\FFF) \to \Omega^*(M\times M)$ denote  fiber integration over $G_o$.  Since the maps \eqref{eq:orientation convention}, \eqref{eq:bundle orientation} preserve orientation, we observe that 
\begin{align}
\label{eq:pi Go 1}\pi_{G_o*}(\Gamma^* d\gamma )&\equiv  Y^* (d\vol_M) \mod X^*\Omega^*(M),\\
\label{eq:pi Go 2}\pi_{G_o*}(\Gamma^* d\gamma )&\equiv  (-1)^d X^* (d\vol_M) \mod Y^*\Omega^*(M).
\end{align}

\subsubsection{Convexity}\label{sect:convex} By a {\it convex body} we understand a non-empty, compact and convex subset of $\rd$. If $K$ is a convex body and $n\in S^{d-1}$ a unit vector, the {\it support function} of $K$ at $n$ is
$$h_K(n)=\sup\{ x\cdot n:\, x\in K\}.$$
For $t>0$  we denote by
$$
C(K,n,t):=\{x\in K:x\cdot n\geq h_K(n)-t\}
$$
the {\it cap} of $K$ of direction $n$ and width $t$.
If $x\in\partial K$ we write
 $$
 \nor (K,x)
 $$ 
 for the set of all unit outer normal vectors to $K$ at $x$ (these are vectors from the dual cone to the tangent cone of $K$ at $x$). The {\it width} of $K$ is defined as
$$\wth K=\inf\{ h_K(n)+h_K(-n):\, n\in S^{d-1}\}.$$

The symbol $\Delta$ will denote the difference operator on sets:
$$\Delta A:=A-A=\{ a-b:\, a,b\in A\}.$$

\subsection{Minkowski content}\label{sect:minkowski}
\begin{definition} \rm
The {\it $m$-dimensional upper Minkowski content} of  $S\subset \Rd$ is
$$
{\cal M}^{*m}(S)=\limsup_{\eps\downarrow 0} (2\eps)^{m-d}\vol(S_\eps),
$$
where $S_\eps$ is the set of points in $M$ lying within distance $\eps$ of $S$. If $\cal M^{*m}(S) <\infty$ then we say that $S$ has {\bf finite $m$-content}.
\end{definition}


For $S$ as above and $\eps>0$, we define the $\eps$-{\it covering number} of $S$
$$\#(S,\eps)=\min\left\{ k:\, S\subset\bigcup_{i=1}^k B(x_i,\eps)\text{ for some }x_1,\dots, x_k\in\R^d\right\}.$$
 
\begin{lem}\label{lem:box} \
\begin{enumerate}
\item $S$ has finite $m$-content iff
$$ \limsup_{\eps\downarrow 0} \eps^{-m}\#(S,\eps)<\infty.$$
\item If $S$ has $\sigma$-finite $m$-content then $S$ has 
 Hausdorff
  dimension $ \le m$.
\item \label{item:mink prod} If $S$ has finite $m$-content and $T$  has finite $n$-content, then $S \times T$ has finite $(m+n)$-content.
\end{enumerate}
\end{lem}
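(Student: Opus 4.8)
The plan is to prove Lemma~\ref{lem:box} by reducing each assertion to an elementary covering-number argument, since all three statements are standard facts about Minkowski content and box dimension; the only point requiring a little care is the precise bookkeeping of the covering-number/volume comparison.

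\begin{proof}[Proof of Lemma~\ref{lem:box}]
For (1), we compare $\vol(S_\eps)$ with $\#(S,\eps)$. On the one hand, if $S \subset \bigcup_{i=1}^k B(x_i,\eps)$ then $S_\eps \subset \bigcup_{i=1}^k B(x_i,2\eps)$, so $\vol(S_\eps) \le k\,\omega_d (2\eps)^d = \#(S,\eps)\,\omega_d(2\eps)^d$ once $k = \#(S,\eps)$. On the other hand, a maximal $2\eps$-separated subset $\{x_1,\dots,x_N\} \subset S$ has the property that the balls $B(x_i,\eps)$ are pairwise disjoint and contained in $S_\eps$, whence $N\,\omega_d\eps^d \le \vol(S_\eps)$; and since the balls $B(x_i,2\eps)$ cover $S$ by maximality, $\#(S,2\eps)\le N$. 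Combining these two inequalities gives constants $c_1,c_2>0$ depending only on $d$ with
$$
c_1\,\eps^d\,\#(S,2\eps) \le \vol(S_\eps) \le c_2\,\eps^d\,\#(S,\eps),
$$
and multiplying through by $\eps^{-d}$ and taking $\limsup$ as $\eps\downarrow 0$ shows that $\limsup (2\eps)^{m-d}\vol(S_\eps)<\infty$ if and only if $\limsup \eps^{-m}\#(S,\eps)<\infty$ (absorbing the harmless factor of $2^{m-d}$ and the substitution $2\eps\mapsto\eps$ into the constants).

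For (2), suppose $S = \bigcup_j S_j$ with each $S_j$ of finite $m$-content; it suffices to show each $S_j$ has Hausdorff dimension $\le m$, since Hausdorff dimension is countably stable. Fix $s>m$. By part (1) there is $C$ with $\#(S_j,\eps) \le C\eps^{-m}$ for all small $\eps$, so covering $S_j$ by $\#(S_j,\eps)$ balls of radius $\eps$ bounds the $s$-dimensional Hausdorff premeasure by $\#(S_j,\eps)\,(2\eps)^s \le C\,2^s\,\eps^{s-m} \to 0$ as $\eps\downarrow 0$; hence $\H^s(S_j)=0$ and $\dim_{\mathcal H} S_j \le s$. Letting $s\downarrow m$ gives the claim.

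For (3), we use the covering-number characterization from part (1) together with the product estimate $\#(S\times T,\eps\sqrt 2) \le \#(S,\eps)\cdot\#(T,\eps)$: if $S\subset\bigcup B(x_i,\eps)$ in $\R^{d}$ and $T\subset\bigcup B(y_j,\eps)$ in $\R^{d'}$, then the balls $B((x_i,y_j),\eps\sqrt 2)$ in $\R^{d+d'}$ cover $S\times T$. Thus if $\#(S,\eps)\le C_1\eps^{-m}$ and $\#(T,\eps)\le C_2\eps^{-n}$ for all small $\eps$, then $\#(S\times T,\eps') \le C_1 C_2\,2^{(m+n)/2}\,(\eps')^{-(m+n)}$ for all small $\eps'$ (writing $\eps' = \eps\sqrt 2$), so by part (1) again $S\times T$ has finite $(m+n)$-content.
\end{proof}

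The main obstacle, such as it is, is purely a matter of keeping the constants and the $2\eps$-versus-$\eps$ rescalings straight in part (1); conceptually there is nothing deep here, and the product step in part (3) is exactly where the choice of Minkowski content over Hausdorff measure pays off, as advertised in the introduction.
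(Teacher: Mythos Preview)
Your proof is correct and takes essentially the same approach as the paper's: part~(1) is established via the standard volume-versus-covering-number comparison (the paper phrases it in terms of the packing number $P(S,\eps)$ and defers the details to \cite{Mat95}, while you write out the maximal-separated-set argument directly---these are the same thing), and parts~(2) and~(3) are then immediate consequences of~(1) in both treatments.
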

\begin{proof} 
(1) follows from the inequalities
$$P(S,\eps)\,\omega_d\eps^d\leq\vol(S_\eps)\leq \#(S,2\eps)\,\omega_d(2\eps)^d,$$
where $P(S,\eps)$ is the maximal number of disjoint $\eps$-balls with centres in $S$ ($\eps$-packing number of $S$), and from 
$$\#(S,2\eps)\leq P(S,\eps)\leq \#(S,\eps/2),$$
see \cite[\S5.3-5.5]{Mat95} for details. Assertions (2) and (3) follow at once from (1).
\end{proof}

\subsection{Lipschitz and \DC~ functions}\label{sect:ma and dc} 
If $f$ is a locally Lipschitz function defined on a $d$-dimensional $C^1$ manifold $M$,  we denote by $\setd f(x)$ its {\bf Clarke differential} \cite{clarke} at  $x\in M$. To be explicit, we take
$
\setd f(x) \subset T^*_xM$ to be the convex hull of the set of all $\xi \in T^*_xM$  with the following property: there exists a sequence $ x_1,x_2,\dots \to x$, such that $ f$ is differentiable at each $ x_i$, and $\lim_{i\to\infty}df(x_i)= \xi $. 
Since by Rademacher's theorem such $f$ is differentiable a.e., this defines a nonempty compact convex subset of $T^*_xM$. 
If $\psi$ is $C^1$ then the chain rule
$$
\setd (f\circ \psi) (x) \subset \psi^* (\setd f(\psi(x)))
$$
holds (cf. \cite{clarke} for this and other basic relations regarding the Clarke differential).
%

\begin{lem}\label{lem:local extremum}
If $x$ is a local extremum of $f$ then $0\in \setd f(x)$. \quad $\square$
\end{lem}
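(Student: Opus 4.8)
The plan is to argue by contradiction, combining the Hahn--Banach separation theorem with the defining description of $\setd f$ as the convex hull of limiting gradients. Since both the hypothesis and the conclusion are local and transform correctly under $C^1$ coordinate changes (the displayed chain rule is an equality for diffeomorphisms, and $\psi^*$ is a linear isomorphism fixing $0$), I may assume $M$ is an open subset of $\Rd$ and $x=0$; replacing $f$ by $-f$ if necessary, I may also assume $0$ is a local minimum. Suppose, for contradiction, that $0\notin\setd f(0)$. As $\setd f(0)$ is a nonempty compact convex subset of $T_0^*\Rd\cong\Rd$, strict separation furnishes $v\in\Rd$ and $\delta>0$ with $\langle\xi,v\rangle\geq\delta$ for every $\xi\in\setd f(0)$.

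Next I would propagate this inequality to a full neighborhood of $0$: there is $r>0$ such that $\langle df(y),v\rangle\geq\delta/2$ at every point $y\in B(0,r)$ at which $f$ is differentiable. Indeed, otherwise one could choose differentiability points $y_i\to 0$ with $\langle df(y_i),v\rangle<\delta/2$; local Lipschitzness bounds $\{df(y_i)\}$, so a subsequence converges to some $\xi$, which lies in $\setd f(0)$ by definition while satisfying $\langle\xi,v\rangle\leq\delta/2<\delta$ --- a contradiction.

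It remains to exhibit a point arbitrarily close to $0$ at which $f<f(0)$, contradicting local minimality. Choose a small spherical cap $\Sigma\subset S^{d-1}$ of directions clustered about $-v/|v|$, narrow enough --- in terms of $\delta$ and the local Lipschitz constant $L$ of $f$ near $0$ --- that $\langle df(y),w\rangle\leq-\delta/(4|v|)$ whenever $w\in\Sigma$ and $y\in B(0,r)$ is a point of differentiability. By Rademacher's theorem $f$ is differentiable almost everywhere, so expressing Lebesgue measure on the truncated cone $\{\rho w: 0<\rho\leq\rho_0,\ w\in\Sigma\}$ in polar coordinates and applying Fubini yields, for ${\mathcal H}^{d-1}$-a.e. $w\in\Sigma$, a ray along which $f$ is differentiable at $\rho w$ for a.e. $\rho$. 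For such a $w$ the function $\rho\mapsto f(\rho w)$ is Lipschitz, hence absolutely continuous, and its derivative equals $\langle df(\rho w),w\rangle$ a.e., so for $\rho_0$ small
$$
f(\rho_0 w)-f(0)=\int_0^{\rho_0}\langle df(\rho w),w\rangle\,d\rho\leq-\frac{\rho_0\delta}{4|v|}<0,
$$
the desired contradiction once $\rho_0$ is also small enough that $\rho_0 w$ lies in the neighborhood on which $0$ is a local minimum of $f$.

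The one genuinely delicate point is this final step: one cannot simply differentiate $f$ along the single segment $t\mapsto-tv$, because that segment is Lebesgue-null and a priori carries no differentiability information about $f$; spreading the argument over a positive-measure cone of directions and invoking Fubini circumvents this. (Alternatively, the last two paragraphs may be replaced by a citation to the identity $\setd f(0)=\{\xi:\langle\xi,w\rangle\leq f^{\circ}(0;w)\ \text{for all }w\}$, where $f^{\circ}$ denotes Clarke's generalized directional derivative; then $f^{\circ}(0;-v)\leq-\delta$, while local minimality gives $f^{\circ}(0;-v)\geq\limsup_{t\downarrow 0}t^{-1}(f(-tv)-f(0))\geq 0$, a contradiction. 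I prefer the self-contained route above.)
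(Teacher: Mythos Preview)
Your proof is correct. The paper itself does not supply a proof: the $\square$ appended directly to the statement signals that the authors regard this as a standard fact, implicitly deferring to \cite{clarke} (it is Proposition~2.3.2 there). The textbook argument is essentially the alternative you sketch in your final paragraph, via the characterization $\partial f(x)=\{\xi:\langle\xi,w\rangle\le f^{\circ}(x;w)\text{ for all }w\}$ together with the immediate observation that a local minimum forces $f^{\circ}(x;w)\ge 0$ for every direction $w$. Your main route avoids introducing the generalized directional derivative $f^{\circ}$ at the price of the Fubini/cone-of-rays argument; this is a reasonable trade if one wishes to stay close to the raw definition of $\partial f$ as the convex hull of limiting gradients, and your remark about why the single ray $t\mapsto -tv$ is insufficient is apt. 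Neither approach buys anything the other lacks here, and both are short.
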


\begin{lem}[\cite{clarke}, Proposition 2.3.3
 ] \label{lem:clarke of max} If $f,g:M\to \R$ are locally Lipschitz functions  then
$$
\partial (f+g)(x) \subset \partial f(x) + \partial g(x),
$$
where $+$ on the right hand side denotes Minkowski sum.
\end{lem}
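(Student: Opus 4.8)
The plan is to reduce the asserted inclusion to an inequality of support functions and then to invoke the duality, fundamental in Clarke's calculus, between the differential $\partial h(x)$ of a locally Lipschitz function $h$ and its generalized directional derivative. Note first that $\partial(f+g)(x)$ is a nonempty compact convex subset of $T^*_xM$ by the definition recalled above, and that $\partial f(x)+\partial g(x)$ is such a set as well, being a Minkowski sum of two of them. Since for nonempty compact convex sets $C,D$ one has $C\subseteq D$ exactly when $h_C\le h_D$, and since $h_{C+D}=h_C+h_D$, the lemma is equivalent to the inequality
$$
h_{\partial(f+g)(x)}(v)\ \le\ h_{\partial f(x)}(v)+h_{\partial g(x)}(v),\qquad v\in T_xM.
$$

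Passing to a $C^1$ chart around $x$ (a routine reduction, since both sides transform consistently under the $C^1$ chain rule quoted above), recall Clarke's {\it generalized directional derivative}
$$
h^\circ(x;v):=\limsup_{\substack{y\to x\\ t\downarrow 0}}\frac{h(y+tv)-h(y)}{t}
$$
and the identity $h^\circ(x;v)=h_{\partial h(x)}(v)$, valid for every locally Lipschitz $h$: this combines \cite{clarke}, Proposition 2.1.2 with the theorem of \cite{clarke}, \S2.5, identifying the limiting-differential description of $\partial h(x)$ used in the present paper with the subdifferential of the sublinear function $h^\circ(x;\cdot)$. Granting this, it remains only to verify the subadditivity $(f+g)^\circ(x;v)\le f^\circ(x;v)+g^\circ(x;v)$; but this is immediate on writing the difference quotient of $f+g$ as the sum of those of $f$ and $g$ and using that the $\limsup$ of a sum is at most the sum of the $\limsup$'s. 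Combining the last two displays with the support-function calculus above finishes the argument.

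The one substantive ingredient is the duality $h^\circ(x;v)=h_{\partial h(x)}(v)$; everything else is the elementary bookkeeping of support functions and the trivial subadditivity of $\limsup$. If one wished to avoid citing that duality, the real work would shift there --- it follows from Rademacher's theorem together with the fact that the set of limiting differentials is unchanged when a Lebesgue-null set is removed from the points of differentiability --- but in the present context it is entirely legitimate to treat it as known, as indeed the text already signals by referring to \cite{clarke} for the basic properties of the Clarke differential.
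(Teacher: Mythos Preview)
The paper does not give its own proof of this lemma; it simply cites Proposition 2.3.3 of \cite{clarke}. Your argument is correct and is precisely the standard proof given there: the subadditivity $(f+g)^\circ(x;v)\le f^\circ(x;v)+g^\circ(x;v)$ of the generalized directional derivative, combined with the identification of $h^\circ(x;\cdot)$ as the support function of $\partial h(x)$ (and, as you note, the equivalence of the limiting-differential definition used in the present paper with Clarke's original definition, which is Theorem 2.5.1 of \cite{clarke}).
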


\begin{definition}\label{def:weakly reg}{\rm Put  
$$
 \graph (\partial f) := \{\xi \in T^*M: \xi \in \partial f(\pi(\xi))\}.
 $$
 Clearly $\graph (\partial f)$ is a closed subset of $T^*M$.
 
 A number $c \in \R$ is a {\bf weakly regular value} of $f$ if 
 \begin{equation}
\clos\left(\graph (\partial f) \cap \pi\inv f\inv((c,\infty))\right) \cap \pi\inv f\inv(c) \cap \z = \emptyset.
\end{equation}
This condition is equivalent to each of the following statements:
\begin{enumerate} \item Whenever $M\owns x_1,x_2,\dots \to x_0$, with $f(x_i )> f(x_0) = c$,  and $\xi_i \in \partial f(x_i)$, then $\xi_i\not \to 0$.
\item Let $\ell: T^*M \to [0,\infty)$ be a   length function as above. Then for any $K\csubset M$ there exists $\eps >0$ such that  
$$
x\in K, \ v\in \partial f(x), \ c<f(x)< c+ \eps  \implies \ell(v) \ge \eps.
$$
\end{enumerate}}
\end{definition}


\begin{definition}{\rm
A function $f$ defined on an open set $U\subset \Rd$ is  {\bf\DC~} if for every $x\in U$ there is some convex neighborhood $V\subset U$ of $x$, and convex functions $g,h:V \to \R$, such that $f=g-h$ on $V$.  The class of all such functions is denoted $\DC(U)$.}
\end{definition}
Obviously every \DC~ function is locally Lipschitz. 
The class of \DC~ functions enjoys many remarkable properties, prominently the following classical result of Hartman.

\begin{thm}[\cite{hartman}]\label{thm:hartman} Let $U\subset \Rn, V \subset \Rd$ be open. Suppose $\psi=(\psi_1,\dots,\psi_n): V \to U$, where the $\psi_i \in \DC(V)$, and $f \in \DC(U)$.  
Then $f\circ\psi \in \DC(V)$. 
\end{thm}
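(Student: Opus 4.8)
\section*{Proof proposal for Theorem \ref{thm:hartman}}

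The plan is to reduce the statement, via the local nature of the definition of $\DC$, to the following assertion: \emph{if $W \subset \Rn$ is open and convex, $g : W \to \R$ is convex, and $\psi = (\psi_1,\dots,\psi_n)$ maps a convex open set into $W$ with each $\psi_i$ a $\DC$ function on that set, then $g \circ \psi$ is $\DC$.} Granting this, the theorem follows at once. Fix $x_0 \in V$; by the definition of $\DC(U)$ choose a convex open $W \subset U$ containing $\psi(x_0)$ together with convex functions $g,h : W \to \R$ with $f = g-h$ on $W$. Since $\psi$ is continuous (each $\psi_i$ being locally Lipschitz) and each $\psi_i$ is locally $\DC$, we may pick a bounded convex open $V_0$ containing $x_0$ with $\overline{V_0} \subset \psi\inv(W)$, shrinking $V_0$ if necessary so that on $V_0$ every $\psi_i$ admits a decomposition $\psi_i = p_i - q_i$ with $p_i,q_i : V_0 \to \R$ convex. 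Applying the assertion to $g$ and to $h$ over $V_0$ shows that $g\circ\psi$ and $h\circ\psi$ are $\DC$ on $V_0$; hence so is $f\circ\psi = g\circ\psi - h\circ\psi$. As $x_0 \in V$ was arbitrary, $f\circ\psi \in \DC(V)$.

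To prove the assertion I would first pass to a bounded convex open $V_1$ containing $x_0$ with $\psi(\overline{V_1})$ a compact subset of $W$, and fix an open convex set $\Omega$ with $\psi(\overline{V_1}) \subset \Omega \csubset W$; let $L$ be a Lipschitz constant for $g$ on $\Omega$. Being convex and $L$-Lipschitz on $\Omega$, $g$ is the supremum over $z_0 \in \Omega$ of its supporting affine functions $z \mapsto g(z_0) + \langle a(z_0), z-z_0\rangle$, where $a(z_0) \in \partial g(z_0)$ and $|a(z_0)| \le L$. Evaluating at $z = \psi(x)$ for $x \in V_1$ gives
$$
g(\psi(x)) = \sup_{z_0 \in \Omega}\Big(\sum_{i=1}^n a_i(z_0)\,\psi_i(x) + \beta(z_0)\Big), \qquad \beta(z_0) := g(z_0) - \langle a(z_0), z_0\rangle .
$$

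Now put $P := L\sum_{i=1}^n (p_i + q_i)$, a convex function on $V_1$. Using $\psi_i = p_i - q_i$ together with $|a_i(z_0)| \le L$, for each fixed $z_0$ the function
$$
x \mapsto \sum_i a_i(z_0)\psi_i(x) + \beta(z_0) + P(x) = \sum_i \big(a_i(z_0)+L\big)p_i(x) + \sum_i \big(L - a_i(z_0)\big)q_i(x) + \beta(z_0)
$$
is a nonnegative linear combination of the convex functions $p_i,q_i$ plus a constant, hence convex on $V_1$. Taking the supremum over $z_0 \in \Omega$, the function $g\circ\psi + P$ is a pointwise supremum of convex functions; it is finite, indeed locally Lipschitz, since $g$ is Lipschitz on $\Omega$, $\psi$ is locally Lipschitz, and $P$ is convex and finite. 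A finite pointwise supremum of convex functions is convex, so $g\circ\psi + P$ is convex, and therefore $g\circ\psi = (g\circ\psi + P) - P$ exhibits $g\circ\psi$ as $\DC$ on the convex neighborhood $V_1$ of $x_0$. This proves the assertion.

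The only delicate points are the bookkeeping ones: arranging all the local data ($g,h$, the decompositions $\psi_i = p_i - q_i$, the Lipschitz constant $L$) simultaneously on one small convex neighborhood, and ensuring that the compact set $\psi(\overline{V_1})$ lies in an open convex $\Omega$ on which $g$ is genuinely Lipschitz. The substantive point — that a convex function of $\DC$ functions is $\DC$ — rests entirely on the observation that the Lipschitz bound confines each coordinate slope $a_i(z_0)$ to $[-L,L]$, so that adding the single convex function $P$ simultaneously convexifies every supporting affine piece of $g$ as a function of the variable $x$.
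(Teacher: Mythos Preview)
The paper does not prove this theorem: it is stated as a classical result of Hartman and simply cited from \cite{hartman}, with no argument given. So there is no ``paper's own proof'' to compare against.

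Your argument is correct and is essentially Hartman's original one. The reduction to the case of a single convex outer function $g$ is immediate, and the heart of the matter is exactly the trick you isolate: writing $g$ on a compact convex piece as a supremum of its supporting affine functions $z\mapsto \langle a(z_0),z\rangle + \beta(z_0)$ with slopes bounded by a Lipschitz constant $L$, and then observing that the single convex corrector $P = L\sum_i(p_i+q_i)$ makes every composition
\[
x \mapsto \sum_i a_i(z_0)\psi_i(x) + \beta(z_0) + P(x) = \sum_i (L+a_i(z_0))p_i(x) + \sum_i (L-a_i(z_0))q_i(x) + \beta(z_0)
\]
convex simultaneously, so that the supremum $g\circ\psi + P$ is convex and $g\circ\psi$ is $\DC$. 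The bookkeeping you flag (arranging $W$, $\Omega$, $V_1$, the decompositions $\psi_i = p_i - q_i$, and the Lipschitz constant $L$ compatibly on one convex neighborhood) is routine and you have handled it cleanly. One tiny remark: you use $|a_i(z_0)|\le L$ componentwise, which of course follows from the Euclidean bound $|a(z_0)|\le L$ guaranteed by the Lipschitz condition; this is fine but worth a word if you write it up formally.
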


Thus  if $M$ is a $C^{1,1}$ manifold we may define $\DC(M)$ to be the space of all functions $f:M\to \R$ with the property that $f\circ \psi\inv \in \DC(U)$ whenever $(\psi,U)$ is a $C^{1,1}$ coordinate patch for $M$.

\begin{cor}\label{cor:max} If $f,g\in \DC(M)$ then $f+g, f\vee g:= \max (f,g) \in \DC(M)$.
\end{cor}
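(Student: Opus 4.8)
The statement to prove is Corollary~\ref{cor:max}: if $f,g\in \DC(M)$ then $f+g$ and $f\vee g = \max(f,g)$ belong to $\DC(M)$.

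\medskip

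The plan is to reduce everything to the local (Euclidean) situation and there to checking that the class of DC functions on a convex set is closed under addition and under taking the pointwise maximum. First I would recall that, by the very definition of $\DC(M)$, a function $u:M\to\R$ lies in $\DC(M)$ precisely when $u\circ\psi\inv\in\DC(U)$ for every $C^{1,1}$ coordinate patch $(\psi,U)$; since composition with $\psi\inv$ is a fixed map, it commutes with the algebraic operations, i.e. $(f+g)\circ\psi\inv = f\circ\psi\inv + g\circ\psi\inv$ and $(f\vee g)\circ\psi\inv = (f\circ\psi\inv)\vee(g\circ\psi\inv)$. So it suffices to prove the two closure properties for $\DC(U)$, $U\subset\Rd$ open. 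Both are local, so fix $x\in U$ and a convex neighborhood $V$ on which $f = g_1 - h_1$ and $g = g_2 - h_2$ with $g_i,h_i$ convex (shrinking $V$ to a common convex neighborhood if necessary). For the sum, $f+g = (g_1+g_2) - (h_1+h_2)$ on $V$, and a sum of convex functions is convex, so $f+g\in\DC(V)$; as $x$ was arbitrary, $f+g\in\DC(U)$.

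\medskip

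For the maximum, the key identity is $\max(f,g) = \tfrac12\bigl(f+g+|f-g|\bigr)$. By the addition part already established, $f-g\in\DC(V)$, so it suffices to show that $|p|\in\DC(V)$ whenever $p\in\DC(V)$, since then $f+g$, $|f-g|$ are DC on $V$ and one more application of the addition closure finishes the job. Writing $p = a - b$ with $a,b$ convex on the convex set $V$, observe
$$
|p| = |a-b| = \max(a-b,\,b-a) = \max(a,b) + \max(-a,-b) - (a+b)= (a\vee b) + (a\vee b) - (a+b),
$$
wait---more cleanly, $\max(a-b,b-a) = (a+b) \vee (a+b) $ is not right; instead use $|a-b| = \max(a,b)+\max(-a,-b) \cdot(-1)$? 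The clean computation is: $\max(a-b,b-a)+ (a+b) = \max(2a,2b)$, hence $|a-b| = 2\max(a,b)-(a+b)$. Thus $|p| = 2(a\vee b) - (a+b)$, where $a\vee b$ is the pointwise maximum of two \emph{convex} functions, which is convex, and $a+b$ is convex; so $|p|$ is a difference of convex functions on $V$, i.e. $|p|\in\DC(V)$. Running this through the two reductions above yields $f\vee g\in\DC(V)$ for every $V$, hence $f\vee g\in\DC(M)$.

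\medskip

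The only genuinely substantive input is Hartman's theorem (Theorem~\ref{thm:hartman}), which is what legitimizes the manifold definition of $\DC$ and guarantees that the coordinate-wise reduction is well posed (the verification is independent of the chosen $C^{1,1}$ atlas); everything after that is elementary convexity bookkeeping. I do not expect any real obstacle: the one point requiring a moment's care is making sure the neighborhoods $V$ on which $f$ and $g$ are simultaneously expressed as differences of convex functions can be taken convex and common---this is routine since any point of $U$ has a basis of convex (e.g. ball) neighborhoods and one intersects the two given neighborhoods. A secondary cosmetic point is fixing the algebraic identity for $|a-b|$ in terms of $\max$; the correct one is $|a-b| = 2\max(a,b) - a - b$, valid pointwise for real numbers, and this transfers verbatim to functions.
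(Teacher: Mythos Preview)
Your argument is correct. The paper states this as an immediate corollary of Hartman's theorem and gives no separate proof, so the intended route is slightly different and shorter: the maps $(s,t)\mapsto s+t$ and $(s,t)\mapsto \max(s,t)$ from $\R^2$ to $\R$ are both convex, hence DC, and the map $x\mapsto (f(x),g(x))$ has DC components; Theorem~\ref{thm:hartman} then gives directly that the compositions $f+g$ and $f\vee g$ are DC (locally in a chart, hence on $M$). Your approach instead unpacks the convex algebra by hand, using the identity $|a-b|=2\max(a,b)-(a+b)$; this is perfectly valid and more self-contained, but it invokes Hartman only for the well-posedness of the manifold definition rather than for the closure properties themselves. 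The trade-off is that your proof is more elementary, while the paper's one-line deduction explains why the result is placed as a corollary. (As a minor presentational point, you should clean out the false starts in the middle paragraph before writing it up.)
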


%

\subsection{\WDC~ sets}

The definition of these objects is motivated by the following. Recall that a function $f$ defined on an open subset of $\Rd$ is {\bf semiconvex} if it may be expressed locally as the sum of a convex function and a smooth function. It is clear that any semiconvex function, and in particular any $C^{1,1}$ function, is \DC.

\begin{thm}[Kleinjohann \cite{kleinjohann}, Bangert \cite{bangert}]\label{thm:klein-bang}  A set $A \subset U\subset \Rd$ has locally positive reach iff  $A = f\inv(-\infty,0]$, where $f:U \to \R $ is a  semiconvex function and $0$ is a weakly regular value of $f$. 
\end{thm}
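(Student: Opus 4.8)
\textbf{Proof plan for Theorem~\ref{thm:klein-bang}.}

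The plan is to prove the two implications separately, using the characterization of sets with positive reach via the metric projection together with standard facts about convex functions. For the direction ``$A$ has locally positive reach $\implies$ the required aura exists'', I would start from the distance function $d_A$ and observe that, on a neighborhood of $A$ where the metric projection $p_A$ is single-valued (such a neighborhood exists locally by the positive reach assumption, cf. Federer's theory in \cite{cm}), the squared distance function $\tfrac12 d_A^2$ is of class $C^{1,1}$ (its gradient is $x \mapsto x - p_A(x)$, which is Lipschitz on such a neighborhood precisely because $\reach A>0$ there). Hence $\tfrac12 d_A^2$ is semiconvex on that neighborhood. The first task is therefore to patch these local semiconvex functions together: take a locally finite $C^\infty$ partition of unity $\{\varphi_j\}$ subordinate to a cover by such neighborhoods $U_j$, and set $f := \sum_j \varphi_j \cdot (\tfrac12 d_A^2)$. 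Since semiconvexity is preserved under multiplication by nonnegative smooth functions and under locally finite sums (the ``smooth defect'' terms accumulate locally finitely), $f$ is semiconvex on $U := \bigcup_j U_j$, is nonnegative, and vanishes exactly on $A$; so $A = f\inv(-\infty,0]$. It remains to check that $0$ is a weakly regular value: if $x_i \to x_0 \in A$ with $f(x_i)>0$ and $\xi_i \in \partial f(x_i)$ with $\xi_i \to 0$, then using $\partial f(x_i) \subset \sum_j \partial(\varphi_j \tfrac12 d_A^2)(x_i)$ and the chain/product rules for the Clarke differential, one extracts from $\xi_i \to 0$ that the gradient of $d_A^2$ at $x_i$ tends to $0$, i.e. $|x_i - p_A(x_i)| \to 0$ faster than the relevant scale; but $\nabla(\tfrac12 d_A^2)(x_i) = x_i - p_A(x_i)$ has norm exactly $d_A(x_i)>0$, and combining the $\varphi_j$-weights (which sum to $1$) forces a contradiction with $f(x_i) > 0$. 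This is the implication I expect to require the most care, precisely in the bookkeeping of the Clarke differential across the partition of unity.

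For the converse direction, suppose $A = f\inv(-\infty,0]$ with $f$ semiconvex and $0$ a weakly regular value. Working locally, write $f = g + \sigma$ with $g$ convex and $\sigma$ smooth on a convex neighborhood $V$. I would argue that $A \cap V$ has positive reach by producing a lower bound on the reach directly: given $a \in \partial A$ and a point $x$ close to $a$ whose nearest point in $A$ is $a$, one must bound $|x-a|$ below in terms of $\dist(x, A \setminus \{a\})$ uniformly. The weak regularity hypothesis, in the form of item (2) of Definition~\ref{def:weakly reg}, gives $\ell(v) \ge \eps$ for all $v \in \partial f(y)$ whenever $y$ is near $A$ with $0 < f(y) < \eps$; equivalently the sublevel sets $\{f \le t\}$ for small $t>0$ are ``moving at bounded speed'', so $d_A \le C \cdot f$ locally near $\partial A$ for some constant $C$. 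Combining this with the semiconvexity of $f$ (which gives a one-sided quadratic lower bound $f(y) \ge f(a) + \langle \xi, y-a\rangle - \tfrac{c}{2}|y-a|^2$ for $\xi \in \partial f(a)$, since the convex part $g$ contributes a nonnegative term and the smooth part $\sigma$ the $-\tfrac{c}{2}|y-a|^2$ correction) yields exactly the supporting-ball condition characterizing positive reach: for each boundary point there is a ball of radius $\ge r_0 > 0$ touching $A$ from outside at that point, with $r_0$ depending only on $\eps$, $C$ and the semiconvexity constant $c$.

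Finally I would note that both implications are local statements (``locally positive reach'' and the local nature of semiconvexity and of the weak-regularity condition, the latter being quantified over compact sets), so the local arguments above globalize immediately over $U$. The technically delicate point, and the one I would write out in full, is the first implication's verification that $0$ is a weakly regular value of the glued function $f$; everything else reduces to classical convex-analytic estimates and Federer's reach characterization.
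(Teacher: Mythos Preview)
The paper does not prove this theorem; it is quoted from the literature with attribution to Kleinjohann \cite{kleinjohann} and Bangert \cite{bangert}, and no argument is given. So there is no ``paper's own proof'' to compare against.

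That said, your forward implication has a genuine gap. You propose $f=\tfrac12 d_A^2$ as the semiconvex aura, correctly noting that it is $C^{1,1}$ (hence semiconvex) on a tubular neighborhood of a set with positive reach. But $0$ is \emph{not} a weakly regular value of $\tfrac12 d_A^2$: at any $x\notin A$ near $A$ one has $\nabla(\tfrac12 d_A^2)(x)=x-p_A(x)$, whose norm is exactly $d_A(x)$, and this tends to $0$ as $x\to\partial A$. Your claimed ``contradiction with $f(x_i)>0$'' is not one: the gradient norm $d_A(x_i)$ is positive but converges to $0$, which is precisely what weak regularity forbids. (The partition of unity is also superfluous here, since $\tfrac12 d_A^2$ is a single globally defined function; summing $\varphi_j\cdot\tfrac12 d_A^2$ over a partition of unity just returns $\tfrac12 d_A^2$.)

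The correct candidate is $d_A$ itself, whose gradient has norm $1$ off $A$, so weak regularity is immediate. The substantive work then shifts to showing that $d_A$ is semiconvex near $A$ when $A$ has locally positive reach; this is exactly the content of Kleinjohann's result and is not as cheap as the $C^{1,1}$ statement for $d_A^2$. One route is to use Federer's characterization $\langle v,b-a\rangle\le \tfrac{1}{2r}|b-a|^2$ for $a,b\in A$, $v\in\nor(A,a)$, $|v|=1$, to obtain a lower Hessian bound for $d_A$ in the support sense. Your converse direction is along the right lines, though note that weak regularity as stated in Definition~\ref{def:weakly reg} controls $\partial f$ only at points with $f>0$; you must pass to the limit to get a nonzero element of $\partial f(a)$ for $a\in\partial A$, and then check it points in the correct (outward) direction to yield an exterior supporting ball.
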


\begin{definition} {\rm Let $M$ be a  a $C^2$ manifold.  A subset  $A\subset M$ is a {\bf \WDC~ subset} of $M$ (or simply a {\bf  \WDC~ set}) if $A = f\inv(-\infty,c]$ for some $f \in \DC(M)$ and some weakly regular value $c$ of $f$.

If $c=0$ and $f\ge 0$ then $f$ is a {\bf \DC~ aura} (or simply an {\bf aura}) for $A$. }
\end{definition}

\noindent{\bf Remark.} This terminology is different from that of \cite{fu94}, in which the function $f$ would be referred to as a {\it nondegenerate} aura. The point there was that the weak regularity condition may be removed if the function involved is subanalytic. In the present paper, however, all auras will be nondegenerate.

\begin{prop} Every \WDC~ set admits a DC aura.
\end{prop}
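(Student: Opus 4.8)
The goal is to show that if $A\subset M$ is WDC, so that $A=f\inv(-\infty,c]$ for some $f\in\DC(M)$ and some weakly regular value $c$ of $f$, then $A$ also admits a \emph{DC aura}, i.e. a nonnegative $g\in\DC(M)$ with $A=g\inv(0)$ for which $0$ is weakly regular. The natural candidate is $g:=(f-c)\vee 0=\max(f-c,0)$. First I would note that $f-c\in\DC(M)$ trivially (subtracting a constant preserves DC), the zero function is DC, and hence by Corollary \ref{cor:max} the maximum $g=(f-c)\vee 0\in\DC(M)$. Clearly $g\ge 0$, and $g(x)=0$ exactly when $f(x)\le c$, i.e. $g\inv(0)=A$; also $g\inv(-\infty,0]=g\inv(0)=A$, so $g$ cuts out $A$ as a sublevel set at the value $0$.

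The substantive point is that $0$ is a weakly regular value of $g$. Here I would use the characterization in Definition \ref{def:weakly reg}(1): suppose $M\owns x_i\to x_0$ with $g(x_i)>g(x_0)=0$ and $\xi_i\in\partial g(x_i)$; I must show $\xi_i\not\to 0$. Since $g(x_i)>0$ we have $f(x_i)>c$, and $g=f-c$ on the open set $\{f>c\}$, so $\partial g(x_i)=\partial f(x_i)$ there; thus $\xi_i\in\partial f(x_i)$. Also $g(x_0)=0$ forces $f(x_0)\le c$, while $f(x_i)>c\to f(x_0)$ by continuity gives $f(x_0)=c$. So we are in exactly the situation that $x_i\to x_0$ with $f(x_i)>f(x_0)=c$ and $\xi_i\in\partial f(x_i)$; since $c$ is a weakly regular value of $f$, the characterization (1) for $f$ yields $\xi_i\not\to 0$. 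Hence $0$ is a weakly regular value of $g$, and $g$ is a DC aura for $A$.

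One small technical care is needed: the equality $\partial g(x)=\partial f(x)$ for $x$ in the \emph{open} set $\{f>c\}$. This holds because the Clarke differential is a local object (it depends only on germs / on behavior in arbitrarily small neighborhoods via difference quotients of the differential at nearby differentiability points), and $g$ agrees with $f-c$ on a neighborhood of any such $x$; differentiability points and limiting differentials therefore coincide. I do not foresee this as a serious obstacle, but it is the only place where one must be slightly careful rather than purely formal; everything else is bookkeeping with Corollary \ref{cor:max} and the equivalent formulations of weak regularity. Thus the main (and only mildly delicate) step is transferring the weak-regularity condition from $f$ at the value $c$ to $g=(f-c)\vee 0$ at the value $0$, which reduces cleanly to the locality of the Clarke differential on $\{f>c\}$.
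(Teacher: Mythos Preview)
Your proposal is correct and follows exactly the same approach as the paper, which simply states that $(f-c)\vee 0$ is a DC aura for $A$ by Corollary \ref{cor:max}. You have merely filled in the details the paper leaves implicit, in particular the verification that $0$ is a weakly regular value of $g=(f-c)\vee 0$ via locality of the Clarke differential on $\{f>c\}$.
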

\begin{proof} Given $A,f,c$ as above, Corollary \ref{cor:max} implies that $(f-c)\vee 0$ is a DC aura for $A$.
\end{proof}

\begin{definition} \rm Let $M$ be a $C^2$ manifold and $f$ an aura for a \WDC~ set $A \subset M$.
 Given $\eps >0$ we denote
$$
\noreps f:= \nu(\{v: \ell(v) \ge \eps, \text{ and }v\in \partial f(x) \text{ for some } x \in \bdry A\}) \subset S^*M.
$$
\end{definition}

Since the graph of the Clarke differential $\partial f$ is closed, it follows that $\noreps(f)$ is a closed subset of $S^*M$ for every $\eps >0$.

\subsection{ Monge-Amp\`ere functions} Let $M$ be an oriented $C^2$ manifold of dimension $d$. The cotangent bundle $T^*M$ carries a natural {\bf canonical 1-form} $\alpha \in \Omega^1(T^*M)$ given by
$$
\langle \alpha_\xi, \tau \rangle := \langle \xi, \pi_* \tau\rangle, \quad \xi \in T^*M, \tau \in T_\xi T^*M.
$$
The exterior derivative $\omega:= d\alpha \in \Omega^2(T^*M)$ is the standard symplectic form of $T^*M$.

We recall that $f \in W^{1,1}_{loc}(M)$ is said to be {\bf Monge-Amp\`ere} (or {\bf MA}) if there exists an integral current $\D(f) \in \mathbb I_d(T^*M)$ satisfying the axioms of  \cite{fuMA, jerrard1, jerrard2}, i.e.
\begin{enumerate}
\item $\partial \D(f)=0$;
\item $\D(f)$ is Lagrangian, i.e. $\D(f) \with \omega = 0$;
\item $\operatorname{mass}(\D(f)\with \pi\inv (K))<\infty$ for every $K\csubset M$ (the mass may be computed with respect to the Sasaki metric corresponding to any $C^2$ Riemannian metric on $M$);
\item \label{item:integral condition} for any smooth  volume form $d\vol_M \in \Omega^d(M)$ and  every $g\in C^{\infty}_c(T^*M)$,
$$
\int_{\D(f)}g \wedge \pi^*(d\vol_M)=\int_M g(x,df(x))\: d\vol_M.
$$
\end{enumerate}
By Theorem 4.3.2(1) of \cite{gmt}, the condition \eqref{item:integral condition} may be replaced by the  equivalent condition

\indent \quad($4'$) $\langle \D(f),\pi, x\rangle  = \delta_{(x,df(x))}$ for a.e. $x \in M$.

%
%
%
%
%
%
%

As shown in the papers cited above, these axioms determine $\D(f)$ uniquely if it exists.  We denote the class of all such $f$ by $\MA(M)$. Strictly speaking, the discussion there applies to the case where $M$ is an open subset of $\rd$; starting from that formulation the class $\MA(M)$ may also be defined in terms of local coordinates, in view of the following.

\begin{lem}\label{lem:diffeo invariance} 
Let $U,V \subset \rd$ be open, and $\psi:U\to V$ a $C^2$ diffeomorphism. Put $\tilde \psi = (\psi\inv)^*:T^*U \to T^*V$ for the induced $C^1$ diffeomorphism of cotangent bundles. If $f \in \MA(V)$ then $f \circ \psi \in \MA(U)$, with
$$
\D(f\circ \psi) = \tilde \psi_*\D(f).
$$
\end{lem}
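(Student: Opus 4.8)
The plan is to verify the four defining axioms for $\D(f\circ\psi)$ directly by setting $T := \tilde\psi_*\D(f) \in \mathbb{I}_d(T^*U)$ and checking that $T$ satisfies (1)--(4) (equivalently $(4')$) of the definition of $\MA(U)$; uniqueness of the differential cycle, recalled above, then forces $\D(f\circ\psi) = T$. The four checks are of very different flavors, and the crux is the chain-rule computation needed for the integral identity $(4')$.

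First I would record the elementary compatibilities. Since $\psi$ is a $C^2$ diffeomorphism, $\tilde\psi = (\psi\inv)^*$ is a $C^1$ diffeomorphism of cotangent bundles which is proper, so pushforward $\tilde\psi_*$ is a well-defined homomorphism $\mathbb{I}_d(T^*V)\to\mathbb{I}_d(T^*U)$; hence $T$ is a genuine integral current. Axiom (1), $\partial T = 0$, is immediate from $\partial(\tilde\psi_*\D(f)) = \tilde\psi_*\partial\D(f) = \tilde\psi_*0 = 0$. For axiom (2), the key classical fact is that the induced map $\tilde\psi$ on cotangent bundles is a symplectomorphism: it preserves the canonical $1$-form $\alpha$, i.e. $\tilde\psi^*\alpha_U = \alpha_V$ (this is exactly the naturality of the tautological form under lifted diffeomorphisms, which follows by unwinding the definition $\langle\alpha_\xi,\tau\rangle = \langle\xi,\pi_*\tau\rangle$ together with $\pi_U\circ\tilde\psi = \psi\inv\circ\pi_V$), and therefore $\tilde\psi^*\omega_U = \omega_V$. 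Then for any test $(d-2)$-form $\eta$ on $T^*U$,
$$
(T\with\omega_U)(\eta) = \int_{\D(f)}\tilde\psi^*(\omega_U\wedge\eta) = \int_{\D(f)}\omega_V\wedge\tilde\psi^*\eta = (\D(f)\with\omega_V)(\tilde\psi^*\eta) = 0,
$$
so $T$ is Lagrangian. Axiom (3) is a local mass estimate: for $K\csubset U$, $\tilde\psi$ is a $C^1$ diffeomorphism on the compact set $\pi_U\inv(K)\cap\spt\D(f\circ\psi$-candidate$)$, hence bi-Lipschitz there, so $\mass(T\with\pi_U\inv(K)) = \mass(\tilde\psi_*(\D(f)\with\pi_V\inv(\psi(K)))) \le C\,\mass(\D(f)\with\pi_V\inv(\psi(K))) < \infty$ by the corresponding property of $\D(f)$; the choice of Riemannian metric is immaterial since any two $C^2$ metrics on $M$ give comparable Sasaki metrics over a compact base set.

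The main obstacle is axiom $(4')$ (or equivalently the integral form (4)), which is where the $W^{1,1}_{loc}$ chain rule $d(f\circ\psi)(x) = \psi^*(df(\psi(x)))$ (valid a.e., using $f\in W^{1,1}_{loc}(V)$ and $\psi$ a $C^2$ diffeomorphism) must be fed through the slicing machinery. I would argue as follows: $\pi_U\circ\tilde\psi = \psi\inv\circ\pi_V$, so by commutativity of pushforward and slicing \eqref{eq:funct of slice}, for a.e.\ $x\in U$,
$$
\langle T,\pi_U,x\rangle = \langle \tilde\psi_*\D(f),\pi_U,x\rangle = \tilde\psi_*\langle\D(f),\pi_U\circ\tilde\psi,x\rangle = \tilde\psi_*\langle\D(f),\psi\inv\circ\pi_V,x\rangle.
$$
Next, $\langle\D(f),\psi\inv\circ\pi_V,x\rangle = \langle\D(f),\pi_V,\psi(x)\rangle$ up to the Jacobian sign of $\psi\inv$ at $x$, which is a fixed $\pm1$ since $\psi$ is a diffeomorphism of a connected or at worst locally constant-sign domain; with the convention that $\psi$ is orientation-preserving (or absorbing the sign into the orientation of $U$, as the statement implicitly allows) this sign is $+1$. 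By axiom $(4')$ for $\D(f)$, $\langle\D(f),\pi_V,\psi(x)\rangle = \delta_{(\psi(x),df(\psi(x)))}$ for a.e.\ $\psi(x)$, hence for a.e.\ $x$ since $\psi$ preserves null sets. Pushing the Dirac mass forward by $\tilde\psi$ gives $\tilde\psi_*\delta_{(\psi(x),df(\psi(x)))} = \delta_{\tilde\psi(\psi(x),df(\psi(x)))}$, and by the definition of $\tilde\psi = (\psi\inv)^*$ together with the a.e.\ chain rule this point equals $(x, \psi^*df(\psi(x))) = (x, d(f\circ\psi)(x))$. Therefore $\langle T,\pi_U,x\rangle = \delta_{(x,d(f\circ\psi)(x))}$ for a.e.\ $x\in U$, which is exactly $(4')$ for $f\circ\psi$. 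Combining the four verified axioms with the uniqueness statement recalled above yields $\D(f\circ\psi) = T = \tilde\psi_*\D(f)$, and in particular $f\circ\psi\in\MA(U)$. The subtle points to handle carefully are the orientation/sign bookkeeping under $\tilde\psi_*$ and the fact that the a.e.\ chain rule for $W^{1,1}_{loc}$ compositions with $C^2$ diffeomorphisms suffices — both are standard but must be invoked explicitly.
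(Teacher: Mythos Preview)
Your proposal is correct and is exactly the approach the paper takes: the paper's proof consists of the single sentence ``Using the fact that $\tilde\psi$ is a symplectomorphism, it is easy to confirm that the right hand side satisfies the axioms above with $f$ replaced by $f\circ\psi$,'' and you have simply written out those confirmations in detail. Your handling of axiom $(4')$ via commutativity of pushforward and slicing, together with the a.e.\ chain rule, is the natural way to make that sentence precise; the only cosmetic point to tidy is the direction bookkeeping for $\tilde\psi$ (the paper's stated domain/codomain and the formula $\D(f\circ\psi)=\tilde\psi_*\D(f)$ are in mild tension, and your relations such as $\pi_U\circ\tilde\psi=\psi^{-1}\circ\pi_V$ implicitly use the interpretation that makes the pushforward land in $T^*U$), but this is a notational issue inherited from the statement rather than a gap in your argument.
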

\begin{proof}
Using the fact that $\tilde \psi$ is a symplectomorphism, it is easy to confirm that the right hand side satisfies the axioms above with $f$ replaced by $f \circ \psi$.
\end{proof}


The  starting point for the main constructions of this paper is the following.

\begin{thm}[\cite{PR}]\label{thm:dc is ma} 
Every \DC~ function is \MA.$ \quad \square$
\end{thm}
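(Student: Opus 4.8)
The plan is to reduce the statement to the case $M = U \subset \Rd$ open, since Lemma \ref{lem:diffeo invariance} together with Hartman's theorem (Theorem \ref{thm:hartman}) shows that both the $\DC$ and $\MA$ properties transfer correctly under $C^2$ coordinate changes; thus it suffices to produce the differential cycle $\D(f)$ for a $\DC$ function $f = g - h$ on a convex open set, with $g,h$ convex. The core idea is to construct $\D(f)$ as a kind of ``difference'' of the graphs of the subdifferentials $\partial g$ and $\partial h$, which are well-understood monotone objects. For a convex function $g$, the set $\graph(\partial g) \subset T^*U$ (in the sense of Definition \ref{def:weakly reg}, using the convex subdifferential, which here coincides with the Clarke differential) is known to be a Lipschitz $d$-manifold: it is the image of $U$ under the a.e.-defined gradient map, and more structurally it can be realized as (a piece of) the boundary of the epigraph of the Legendre-type conjugate construction, or via the classical fact that monotone graphs in $\R^d \times \R^d$ are Lipschitz submanifolds. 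Orienting it appropriately, it carries a well-defined integer-multiplicity current $[[\graph(\partial g)]]$ that is closed (being a ``boundary at infinity'' — more precisely one shows $\partial[[\graph(\partial g)]] = 0$ by a limiting/smoothing argument) and Lagrangian (since the graph of a gradient is an exact Lagrangian where smooth, and this passes to the current). One then verifies axiom (4) by a change-of-variables / area-formula computation: $\int g(x,\nabla g(x))\,d\vol$ picks out exactly the slice of $[[\graph(\partial g)]]$ over a.e. $x$.

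The second step is the superposition. The natural guess is $\D(g-h) := \D(g) \boxplus (-\D(h))$ in some fiberwise-sum sense over $M$, but this must be made precise since $\partial(g-h) \ne \partial g - \partial h$ in general. The clean approach, carried out in \cite{PR}, is instead to work with the map $(x,\xi,\eta) \mapsto (x, \xi - \eta)$ from $\graph(\partial g) \times_M \graph(\partial h)$ (fiber product over $M$) into $T^*M$, and to define $\D(f)$ as the pushforward of (an appropriately sliced/diagonal piece of) the product current. Checking the four axioms for this pushforward is the substance of the argument: closedness follows from $\partial$ commuting with pushforward and with the fiber-product construction; the Lagrangian condition follows from the symplectic form pulling back correctly under the difference map restricted to the product of two Lagrangians; the local-mass bound follows from the Lipschitz regularity of each convex graph plus the coarea/area formula controlling the fiber product; and axiom (4) — or equivalently ($4'$), the slice identity $\langle \D(f),\pi,x\rangle = \delta_{(x,df(x))}$ — follows because for a.e. $x$ both $g$ and $h$ are differentiable at $x$, the fiber product over $x$ is the single point $(x,\nabla g(x),\nabla h(x))$, and the difference map sends it to $(x, \nabla g(x) - \nabla h(x)) = (x, df(x))$. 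Uniqueness of $\D(f)$ is already guaranteed by the cited axiomatic theory, so one need only exhibit a current with these properties.

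The main obstacle, and the place where genuine work is required, is establishing the rectifiability, integer multiplicity, local finite mass, and especially the \emph{closedness} ($\partial \D(f) = 0$) of the constructed current, because a difference of two convex subdifferentials can behave badly on the (Lebesgue-null, but possibly large) set where $g$ or $h$ fails to be differentiable or where the graphs overlap with cancellation. The Lipschitz-manifold structure of a single monotone graph is classical, but controlling the fiber product $\graph(\partial g) \times_M \graph(\partial h)$ — ensuring it is $d$-rectifiable with locally finite $\Ha^d$ measure and that the difference map does not blow up mass — requires a careful estimate, essentially a Fubini-type argument over $M$ combined with the fact that each fiber $\partial g(x)$, $\partial h(x)$ is a convex body whose ``size'' is integrable against the ambient measure. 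Once one has that the image current is a well-defined locally-finite-mass integer-multiplicity $d$-current, closedness is obtained by approximating $g,h$ uniformly by smooth convex functions $g_k, h_k$ (e.g. sup-convolutions or mollifications), for which $\D(g_k - h_k) = [[\graph(d(g_k-h_k))]]$ is manifestly a boundaryless graph current, and then passing to the limit using compactness/closure theorems for integral currents together with convergence of the subdifferential graphs in the appropriate sense. This limiting step — identifying the flat limit of the smooth graph currents with the current built directly from $\partial g, \partial h$ — is the technical heart of \cite{PR} and is what makes the theorem nontrivial.
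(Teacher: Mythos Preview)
The paper does not prove this theorem. It is imported wholesale from \cite{PR}: the statement is attributed to that reference and closed with a $\square$ immediately, with no argument supplied. So there is no ``paper's own proof'' for your proposal to be compared against; anything you write here already exceeds what the present paper does.

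A remark on the substance of your sketch nonetheless. The fiber-product construction you attribute to \cite{PR}---pushing forward $\graph(\partial g)\times_M\graph(\partial h)$ under $(x,\xi,\eta)\mapsto(x,\xi-\eta)$---runs into a genuine dimension obstacle that you only partially flag. At a point $x$ where both $g$ and $h$ fail to be differentiable, the fiber $\partial g(x)\times\partial h(x)$ is positive-dimensional, and the slice defining the fiber product can degenerate there (the relevant map is constant on that piece, so the slice at the diagonal value need not exist). This is exactly why Theorem~\ref{thm:ma addition} in the present paper, which treats sums of MA functions by a fiber-product/slicing mechanism, only produces $\D(f+g\circ\gamma^{-1})$ for \emph{almost every} $\gamma$: the genericity is what sidesteps the degeneracy. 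Your final paragraph, invoking approximation of $g,h$ by smooth convex functions and passing to the limit via compactness for integral currents with uniform local mass bounds, is closer to a workable route and to what is actually done; the mass bound comes from the Lipschitz-graph structure of each convex subdifferential, and the identification of the flat limit is indeed the nontrivial step.
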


We will also need the following fundamental fact.

\begin{lem}[\cite{fuMA}]\label{lem:support df} If $f$ is a locally Lipschitz \MA~ function then
$$
\spt \D(f) \subset \graph \partial f. \quad \square
$$
\end{lem}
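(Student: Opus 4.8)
The statement is local, and by Lemma~\ref{lem:diffeo invariance} both sides transform naturally under $C^2$ diffeomorphisms, so the first step is to reduce to the case $M=V$ an open subset of $\Rd$, with $f$ Lipschitz on $V$ and $T^*V$ identified with $V\times\Rd$; then $\graph\partial f=\{(x,p):p\in\partial f(x)\}$ is closed in $V\times\Rd$. One might hope to argue directly: if $\xi_0=(x_0,p_0)\in\spt\D(f)\setminus\graph\partial f$, choose a box $U=B(x_0,\rho)\times B(p_0,r)$ disjoint from $\graph\partial f$, restrict $\D(f)$ to $U$, and note that, since $\nabla f(x)\in\partial f(x)$ wherever $f$ is differentiable, condition~($4'$) forces a.e.\ slice of this restriction under $\pi$ to vanish. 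However a nonzero Lagrangian integral current with vanishing $\pi$-slices and boundary confined to $\partial U$ can exist --- for instance $\cur{\{x_*\}\times B(p_0,r)}$, a disk inside the cotangent fiber over a single base point --- so the slicing information alone is insufficient; the argument must use the actual construction of $\D(f)$.

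Accordingly, the plan is to exhibit $\D(f)$ as a weak limit of graph currents whose supports converge to $\graph\partial f$. Fix a standard mollifier and set $f_\eps:=f*\rho_\eps$, smooth on $V_\eps:=\{x\in V:\dist(x,\Rd\setminus V)>\eps\}$, with $\nabla f_\eps\to\nabla f$ a.e.\ and in $L^1_{\loc}$. Since $\nabla f_\eps(x)$ is an average of values $\nabla f(z)\in\partial f(z)$ over $z\in B(x,\eps)$, we have $\nabla f_\eps(x)\in\conv\bigcup_{z\in B(x,\eps)}\partial f(z)$; by the upper semicontinuity and compact convexity of the set-valued map $x\mapsto\partial f(x)$ this set converges to $\partial f(x)$ in Hausdorff distance, uniformly for $x$ in a compact subset of $V$, so $\dist\big((x,\nabla f_\eps(x)),\graph\partial f\big)\to 0$ uniformly on compacta. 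The currents $G_\eps:=\cur{\graph(\nabla f_\eps)}$ are smooth Lagrangian integral $d$-cycles with $\partial G_\eps=0$ and $\pi_*G_\eps=\cur{V_\eps}$. Granting the uniform local mass bound $\sup_\eps\mass\big(G_\eps\with\pi\inv(K)\big)<\infty$ for every $K\csubset V$ (see the next paragraph), the Federer--Fleming compactness theorem produces a subsequential weak limit $T_0\in\I_d(V\times\Rd)$, which inherits $\partial T_0=0$ and the Lagrangian property, and which satisfies, for every $g\in C^\infty_c(V\times\Rd)$ and every volume form $d\vol$,
$$
\int_{T_0}g\wedge\pi^*(d\vol)=\lim_\eps\int_{V_\eps}g(x,\nabla f_\eps(x))\,d\vol=\int_V g(x,\nabla f(x))\,d\vol
$$
by dominated convergence; thus $T_0$ satisfies condition~(\ref{item:integral condition}) and hence, by the uniqueness of the differential cycle recalled above, $T_0=\D(f)$ (so in fact the whole family converges). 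Finally, because each $\spt G_\eps=\graph(\nabla f_\eps)$ eventually lies in any prescribed neighborhood of the closed set $\graph\partial f$, any weak limit has support in $\graph\partial f$ (if $\xi_0\notin\graph\partial f$, a small ball about $\xi_0$ misses $\spt G_\eps$ for small $\eps$, hence pairs to $0$ with $T_0$); therefore $\spt\D(f)=\spt T_0\subset\graph\partial f$.

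The uniform mass bound is the point where I expect the real difficulty to lie. When $f$ is semiconvex --- in particular in the positive-reach setting, by Theorem~\ref{thm:klein-bang} --- it is easy: after adding a smooth function we may take $f$ convex near $K$, and then $\mass\big(G_\eps\with\pi\inv(K)\big)=\int_K\sqrt{\det(I+(D^2f_\eps)^2)}\,dx\le\int_K\det(I+D^2f_\eps)\,dx$, whose right-hand side expands into the integrals over $K$ of the Hessian minors of the convex function $f_\eps$, i.e.\ into the total masses of its mixed Monge--Amp\`ere measures, which are finite and bounded uniformly in $\eps$. For a general locally Lipschitz \MA~ function (indeed already for a general \DC~ function, cf.\ Theorem~\ref{thm:dc is ma}) there is no reason for the approximating graphs to have controlled mass, and the compactness step above breaks down; in that generality one instead reads off the support statement directly from the construction of $\D(f)$ carried out in \cite{fuMA,jerrard1,jerrard2}, where $\D(f)$ is realized as (or shown to coincide with) a rectifiable current carried by $\graph\partial f$. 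This is why the lemma is attributed to \cite{fuMA} rather than proved here.
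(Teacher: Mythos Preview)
The paper itself gives no proof of this lemma: it is stated with a citation to \cite{fuMA} and the $\square$ marks it as quoted rather than proved. Your closing paragraph correctly recognizes this, so in that sense your attempt and the paper's treatment agree---both ultimately defer to \cite{fuMA} for the general statement.

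Your mollification argument is the natural approach and is complete in the semiconvex case. One point deserves correction, however: your parenthetical remark that ``indeed already for a general DC function \dots\ there is no reason for the approximating graphs to have controlled mass'' is too pessimistic. The uniform local mass bound on $\mass(G_\eps\with\pi\inv(K))$ \emph{can} be established when $f$ is DC; obtaining it (or an equivalent estimate) is essentially the substance of the proof of Theorem~\ref{thm:dc is ma} in \cite{PR}. Thus for the only use of this lemma made in the present paper---namely in Proposition~\ref{prop:support of n*}, where $f$ is a DC aura---your scheme can in fact be carried through to a genuine proof. It is only for an arbitrary locally Lipschitz MA function, the full generality in which the lemma is stated, that the mollification route is unavailable and one must return to the structural analysis in \cite{fuMA}, exactly as you say.
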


\subsubsection{Sums of MA functions on a homogeneous space} Although the class $\MA(M)$ is not closed under addition, it is closed under addition in general position in a sense given in the next Theorem, a variant of Proposition 2.6 of \cite{fuMA}, part I.

 Let $G$ be a Lie group and $M= G/G_o$ an oriented homogeneous space of $G$, where $G_o$ is the stabilizer of the arbitrarily chosen base point $o \in M$. 
Abusing notation, we denote simply by $\gamma $  the induced action of $\gamma \in G$ on $T^*M$ by symplectomorphisms, i.e.
pullback under the diffeomorphism $\gamma\inv:M \to M$:
\begin{equation}\label{eq:def lift gamma}
 \gamma \xi := (\gamma\inv)^* \xi
\end{equation}
Referring to the convention of Section \ref{sect:manifold conventions}, consider the smooth manifold
$$
\tilde \FFF:=
T^*M \times T^*M \times_{\F} G_o.
$$
 We put
$ \Gamma :\tilde \FFF \to G $
for the map given by the restricted projection to $G$,  and
$$
\Sigma:\tilde \FFF \to T^*M, \quad \Sigma(\xi,\eta,\gamma): = \xi +\gamma \eta.
$$
We put also $X,Y:\tilde \FFF \to M$ for the respective projections to the base spaces of the first and second factors.

\begin{thm}\label{thm:ma addition} Suppose the manifold $M$ is an oriented homogeneous space of $G$, as above, and let $f,g\in \MA(M)$. Then $h_\gamma:=f + g\circ \gamma\inv \in \MA(M)$ for a.e. $\gamma\in G$, with 
\begin{equation}\label{eq:diff of sum}
\D(h_\gamma) =  (-1)^{d\dim G} \Sigma_*\langle \D(f) \times \D(g) \times_{\F} G_o, \Gamma, \gamma\rangle. 
\end{equation}
\end{thm}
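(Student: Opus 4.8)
The plan is to verify directly that the right-hand side of \eqref{eq:diff of sum} satisfies the four axioms defining $\D(h_\gamma)$ for a.e.\ $\gamma\in G$, invoking the uniqueness statement already quoted. Write $T_\gamma := (-1)^{d\dim G}\Sigma_*\langle \D(f)\times\D(g)\times_\F G_o,\Gamma,\gamma\rangle$. Since $\D(f),\D(g)$ are integral currents of dimension $d$, the Cartesian-product-over-$G_o$ current $\D(f)\times\D(g)\times_\F G_o$ is an integral current of dimension $2d + \dim G_o = d + \dim G$ in $\tilde\FFF$, so for a.e.\ $\gamma$ its slice by $\Gamma$ is an integral current of dimension $d$, and $\Sigma$ restricted to a neighborhood of the relevant support is proper on the relevant pieces (using the weak-regularity-free MA framework, properness of $\Sigma$ is arranged exactly as in the cited Proposition 2.6 of \cite{fuMA}), so $T_\gamma \in \I_d(T^*M)$.

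Next I would check the axioms one at a time. For the boundary axiom (1): $\partial\bigl(\D(f)\times\D(g)\times_\F G_o\bigr) = 0$ because $\partial\D(f)=\partial\D(g)=0$ and $\partial G_o$ as a cycle in $G_o$ vanishes (the fiber is a closed manifold), using the Leibniz rule for boundaries of products; then \eqref{eq:slice boundary} gives $\partial\langle\cdot,\Gamma,\gamma\rangle = \pm\langle\partial(\cdot),\Gamma,\gamma\rangle = 0$ for a.e.\ $\gamma$, and pushforward commutes with $\partial$. For the Lagrangian axiom (2): the key computation is that $\Sigma^*\omega$ pulls back the symplectic form to the ``sum'' of the two pulled-back symplectic forms along the first two factors --- precisely because $\gamma$ acts by symplectomorphisms on $T^*M$, so $\Sigma(\xi,\eta,\gamma) = \xi + \gamma\eta$ is fiberwise-additive and $\omega$ is translation-invariant on cotangent fibers --- hence $\Sigma^*\omega = \pi_1^*\omega + (\gamma\circ\pi_2)^*\omega$ on $\tilde\FFF$; contracting with the product current, which is Lagrangian in each factor, and then slicing and pushing forward (slicing and $\with$ a pulled-back form commute) yields $T_\gamma\with\omega = 0$. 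For the mass/finiteness axiom (3): locally finite mass of the product current times $\pi\inv(K)$-type sets follows from Fubini for Cartesian products and the mass bounds for $\D(f),\D(g)$; slicing does not increase mass (coarea), and $\Sigma$ is Lipschitz on the relevant region.

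The substantive axiom is ($4'$) (equivalently (4)), the identification of the slice of $T_\gamma$ by $\pi$ with the Dirac current at $(x, dh_\gamma(x))$ for a.e.\ $x$. Here I would use commutativity of pushforward and slicing \eqref{eq:funct of slice} repeatedly: slicing $T_\gamma$ by $\pi\circ\Sigma$ and using that $\pi\circ\Sigma$ on $\tilde\FFF$ equals $X = \pi\circ\pi_1$, I reduce to slicing the product current by $X$, which by the product-slicing formulas \eqref{eq:slice orientation} and axiom ($4'$) for $\D(f)$ produces (for a.e.\ $x$) the current $\delta_{(x,df(x))}\times \D(g)\times_\F G_o$, living over the fiber $\{(y,\gamma):\gamma y = x\}\cong G$. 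Then re-slicing this by $\Gamma$ at $\gamma$ picks out $(y,\gamma)$ with $\gamma\inv x = \pi(\eta)$, i.e.\ $y = \gamma\inv x$, and applying axiom ($4'$) for $\D(g)$ at the point $\gamma\inv x$ gives $\eta = dg(\gamma\inv x)$; pushing forward by $\Sigma$ sends this to the point $df(x) + \gamma\, dg(\gamma\inv x) = df(x) + d(g\circ\gamma\inv)(x) = dh_\gamma(x)$, using the chain rule and that $\gamma$ on $T^*M$ is pullback under $\gamma\inv$. Tracking all the orientation signs --- from \eqref{eq:slice orientation}, the iterated-slice sign conventions, and the definition of the product-over-$G_o$ current --- is what produces the factor $(-1)^{d\dim G}$ and is the main bookkeeping obstacle; the ``a.e.\ $\gamma$'' clause comes from the fact that \eqref{eq:funct of slice} and the slice-boundary identities hold only for a.e.\ parameter value, combined with a Fubini-type argument (as in \cite{fuMA}) to interchange the ``a.e.\ $x$'' and ``a.e.\ $\gamma$'' quantifiers. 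I expect the genuinely delicate point to be confirming that the iterated slice $\langle\langle\,\cdot\,,X,x\rangle,\Gamma,\gamma\rangle$ equals the single slice of the product by $(X,\Gamma)$, i.e.\ that the two parameter maps are ``independent'' in the sense needed for iterated slicing — this is where the local product structure of $\FFF$ (orientation convention \eqref{eq:bundle orientation} and the diffeomorphism \eqref{eq:orientation of F}) does the real work.
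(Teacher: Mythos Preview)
Your proposal is correct and follows the same route as the paper: verify axioms (1)--($4'$) directly, with ($4'$) handled by iterated slicing through the local product structure \eqref{eq:bundle orientation} of $\FFF$ (the paper invokes \cite[Theorems 4.3.2(6) and 4.3.5]{gmt} for exactly the step you flag as delicate). The one place to tighten is axiom (2): since $\gamma$ is a coordinate on $\tilde\FFF$, the form you write as $(\gamma\circ\pi_2)^*\omega$ is \emph{not} pulled back from the second $T^*M$ factor alone, so ``Lagrangian in each factor'' does not by itself kill it; the paper handles this by computing on $\ker\Gamma_*$ and showing $\Sigma^*\alpha \equiv 2\,\Xi^*\alpha + 2\,H^*\alpha \bmod \Gamma^*\Omega^*(G)$, so that the discrepancy lies in the ideal generated by $\Gamma^*\Omega^1(G)$ and is annihilated upon slicing by $\Gamma$.
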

\begin{proof} We  check that for a.e. $\gamma \in G$  the right hand side satisfies the axioms (1), (2), (3), ($4'$) for \MA~ functions, with $f$ replaced by $f + g\circ\gamma \inv$. 

Axiom (1) is immediate from \eqref{eq:slice boundary}.

To show (2), by \cite{gmt}, Theorem 4.3.2(1) it is enough to prove the following claim. Let $\Xi, H: \tilde \FFF \to T^*M$ denote the restrictions to $\tilde \FFF\subset T^*M \times T^*M \times G_o$ of the projections to the first and second factors, respectively. Then
\begin{equation}\label{eq:omega = omega}
 \Sigma^* \omega \equiv 2\Xi^* \omega + 2H^*\omega \mod \Gamma^*\Omega^*(G).
\end{equation}
In fact we prove the stronger claim that
\begin{equation}\label{eq:alpha = alpha}
 \Sigma^* \alpha \equiv 2\Xi^* \alpha + 2H^*\alpha \mod \Gamma^*\Omega^*(G),
\end{equation}
from which \eqref{eq:omega = omega} follows by taking the exterior derivative.

 To prove \eqref{eq:alpha = alpha}, observe first that each tangent space 
$$
T_{(\xi,\eta,\gamma)} \tilde \FFF\subset T_\xi T^*M\oplus T_\eta T^*M \oplus T_\gamma G
$$
and that the derivative $\Gamma_*$ of $\Gamma$ equals the projection to the last factor on the right. The kernel of this map is clearly
\begin{equation}\label{eq:def V}
V:= \{(\sigma,\tau,0): \sigma \in T_\xi T^*M, \tau \in T_\eta T^*M, \pi_*\sigma = \pi_*\gamma_* \tau \in T_{\pi(\xi)}M \}
\end{equation}
and it is enough to show that 
$$
\restrict{ \Sigma^*\alpha }V =2 \restrict{\Xi^*\alpha }V+ 2\restrict{H^*\alpha }V.
$$
But for $(\sigma,\tau,0) \in V$
\begin{align*}
\langle  \Sigma^*\alpha, (\sigma,\tau,0)\rangle  &= \langle \alpha,  \Sigma_*(\sigma,\tau,0)\rangle \\
& = \langle \alpha, \sigma+\gamma_*\tau\rangle \\
& =  \langle \xi +\gamma \eta , \pi_*(\sigma +\gamma_*\tau)\rangle \\
& =  2\langle \xi  , \pi_*\sigma\rangle  +2\langle \gamma\eta,\gamma_*\pi_*\tau\rangle \quad {\rm by \eqref{eq:def V}}\\
& =  2\langle \xi  , \pi_*\sigma\rangle  +2\langle \eta,\pi_*\tau\rangle \quad {\rm by \ \eqref{eq:def lift gamma}}\\
&=2\langle  \Xi^* \alpha + H^*\alpha,(\sigma,\tau,0)\rangle
\end{align*}
as claimed.

%

To prove (3), we wish to show that for a.e. $\gamma \in G$ and any $K\csubset M$
$$
\mass \left(\Sigma_*\langle \D(f) \times\D(g) \times_{\F} G_o,\Gamma,\gamma \rangle\with \pi\inv (K)\right) <\infty.
$$
Since $\Sigma$ is Lipschitz when restricted to the preimage of any compact subset of $\FFF$ under the projection $\tilde \FFF \to \FFF$, the last relation on p. 370 of \cite {gmt}, together with Theorem 4.3.2(2), op. cit., imply that it is enough to show that 
$$
\mass\left( (\D(f) \times \D(g) \times_{\F} G_o)\with (\pi \circ \Sigma)\inv (K) \cap \Gamma\inv(J) \right)<\infty
$$
for $K\csubset M, J \csubset G$. But this last current is supported in $X\inv(K) \cap Y\inv(J\inv K)$, and hence the finiteness of the mass follows from the finiteness axiom (3) for the \MA~ functions $f,g$.


Finally we prove $(4')$. By commutativity of pushforward and slicing it is enough to show that 
\begin{equation}\label{eq:sum slice}
(-1)^{d\dim G} \langle\langle \D(f) \times \D(g) \times_{\F} G_o, \Gamma, \gamma\rangle, X, x\rangle
= \delta_{(x,df(x))}\times \delta_{(\gamma\inv x,dg(\gamma\inv x))}\times \delta_\gamma
\end{equation}
for a.e. $(x,\gamma)\in M\times G$.

 Clearly we may cover $M\times M$ by open sets $U \times V$ such that there exists a smooth local section $\omega:U \cup V \to G$, i.e. $\omega_z \cdot o = z$ for $z \in U\cup V$. For such $U,V$, consider the diagram
$$
\begin{CD}
T^*M \times T^*M \times G_o \supset & \pi\inv(U) \times \pi\inv(V) \times G_o @>{\tilde \Phi}>> & (X,Y)\inv(U\times V) \subset\tilde \FFF\\
 & @V(X,Y,\bar \Gamma)VV & @VV(X,\Gamma)V \\
& U \times V \times G_o @>\Phi>> & U\times G
\end{CD}
$$
where, abbreviating $x:= \pi(\xi), y:= \pi(\eta)$ for $\xi, \eta\in T^*M$, the vertical map on the left is the projection,  the vertical map on the right is 
$$
(\xi,\eta,\gamma)\mapsto (x,\gamma),
$$
and
\begin{align*}
\tilde \Phi(\xi,\eta, \bar \gamma) &:= (\xi,\eta, \omega_x \bar \gamma \omega_y\inv) \\
\Phi(x,y,\bar \gamma) &:= (x,  \omega_x \bar \gamma \omega_y\inv) .
\end{align*}
Thus $(X,Y)\circ \tilde \Phi = \Phi \circ(X,Y,\bar \Gamma)$.
By definition of the fiber product of currents (cf. Section \ref{sect:manifold conventions}),
$$
(\D(f) \times \D(g) \times_{\F} G_o )\with (X,Y)\inv(U\times V)  = \tilde \Phi_*(\D(\restrict f U) \times \D(\restrict g V) \times G_o).
$$

Note that $\Phi$ is a diffeomorphism onto its image, with inverse
\begin{equation}
\Phi\inv(x,\gamma) = (x, \gamma\inv x, \omega_x\inv \gamma \omega_y).
\end{equation}
 and preserves orientation by the convention \eqref{eq:orientation convention}. Thus for a.e. $(x,\gamma) $ lying in this image, Theorem 4.3.2(6) and Theorem 4.3.5 of \cite{gmt}, together with the \MA~ axiom ($4'$) for $f,g$, imply that
\begin{align*}
 (-1)^{d\dim G}\langle\langle \D(f) \times \D(g) \times_{\F}& G_o , \Gamma, \gamma\rangle, X, x \rangle =   \\
&= (-1)^{d\dim G} \langle \D(f) \times \D(g) \times_{\F} G_o , (\Gamma,X), (\gamma, x)  \rangle \\
&= \langle \D(f) \times \D(g) \times_{\F} G_o , (X,\Gamma), (x,\gamma)  \rangle \\
 &=\tilde \Phi_*\langle \D(\restrict f U) \times \D(\restrict g V) \times G_o, (X,\Gamma)\circ \tilde \Phi, (x,\gamma) \rangle \\
 &= \tilde \Phi_*\langle \D(\restrict f U) \times \D(\restrict g V) \times G_o,  \Phi\circ (X,Y,\bar\Gamma), (x,\gamma) \rangle \\
&=   \tilde \Phi_*\langle \D(\restrict f U) \times \D(\restrict g V) \times G_o,   (X,Y,\bar\Gamma), (x, \gamma\inv x, \omega_x\inv \gamma \omega_y) \rangle \\
&=   \tilde \Phi_*(\delta_{(x,df(x))}\times\delta_{(\gamma\inv x, dg(\gamma\inv x))}\times \delta_{\omega_x\inv \gamma \omega_y})\\
&= \delta_{(x,df(x))}\times\delta_{(\gamma\inv x, dg(\gamma\inv x))}\times \delta_ \gamma,
\end{align*}
which is \eqref{eq:sum slice}.
\end{proof}

\subsection{Conormal cycles for 
\WDC~ sets} \label{sect:regular domains}   We give an abbreviated account of the \DC~ case of the theory presented in  \cite{fu94}. It is remarkable that the ad hoc hypotheses needed to make \cite{fu94} work out are always fulfilled here.

If $f \in \DC(M)$ is an aura for a set $A = f\inv(0) \subset M$, we then make the following provisional definition, which will be superseded in view of Theorem \ref{thm:n def} below:
\begin{equation}\label{eq:def nf}
N^*(f):= \proj_* \partial (\D(f) \with \pi\inv A)= -\proj_*\partial (\D(f) \, \with \,\pi\inv (M\setminus A)).
\end{equation}
This object is clearly defined locally, in the  sense that 
\begin{equation}
N^*(\restrict f U) = N^*(f) \with \pi\inv (U)
\end{equation}
for any open subset $U \subset M$.

\begin{thm}[\cite{fu94}] \label{thm:properties of n*} Under the conditions above, $N^*(f)$ is an integral current of dimension $d-1$ in $S^*M$, with
$$
\partial N^*(f)=N^*(f )\, \with \, \alpha = 0. \quad \quad \square
$$ 
%
\end{thm}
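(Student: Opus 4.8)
The plan is to derive the three asserted facts --- that $N^*(f)$ is integral, $(d-1)$-dimensional, and that both $\partial N^*(f)$ and $N^*(f)\with\alpha$ vanish --- from the two definitions of $N^*(f)$ in \eqref{eq:def nf} together with the weak regularity of the value $0$ and the structural properties of $\D(f)$ recorded in Theorem \ref{thm:dc is ma} and Lemma \ref{lem:support df}. The starting observation is that $\D(f)$ is a cycle ($\partial\D(f)=0$) of locally finite mass in $T^*M$, so $\D(f)\with\pi\inv A$ and $\D(f)\with\pi\inv(M\setminus A)$ are currents of locally finite mass whose sum is $\D(f)$; hence $\partial(\D(f)\with\pi\inv A) = -\partial(\D(f)\with\pi\inv(M\setminus A))$, which gives the second equality in \eqref{eq:def nf} once we know $\nu_*$ is well-defined on these boundaries. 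The boundary $\partial(\D(f)\with\pi\inv A)$ is supported in $\spt\D(f)\cap\pi\inv(\bdry A)$, and by Lemma \ref{lem:support df} this lies in $\graph\partial f\cap\pi\inv f\inv(0)$. The crucial point is that weak regularity of the value $0$ (in the form of Definition \ref{def:weakly reg}(2)) guarantees that this set stays uniformly bounded away from the zero section over compact subsets of $M$: there is $\eps>0$ with $\ell(\xi)\ge\eps$ for every $\xi$ in this set with $\pi(\xi)$ in a given compact set. Consequently $\nu$ restricted to a neighborhood of $\spt\partial(\D(f)\with\pi\inv A)$ is a proper Lipschitz map to $S^*M$, so the pushforward $N^*(f) = \nu_*\partial(\D(f)\with\pi\inv A)$ is a legitimate current of locally finite mass in $S^*M$, and the two expressions in \eqref{eq:def nf} agree.

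Next I would establish integrality and the dimension bound. Since $\D(f)\in\I_d(T^*M)$ and $\with\pi\inv A$ with a Borel set preserves the property of being integral (after intersecting with a large compact set, using the compactness/closure theorem, or more simply because restriction by a set of locally finite perimeter of the base is not needed here --- we only need that $\partial$ of this localization has locally finite mass, which follows from the support estimate above together with the constancy/slicing lemma \cite{gmt}~4.2.1 applied to the indicator of $A$ pulled back via $\pi$), the boundary $\partial(\D(f)\with\pi\inv A)$ is an integral current of dimension $d-1$ supported in a region where $\nu$ is proper Lipschitz, hence its pushforward $N^*(f)$ is integral of dimension $\le d-1$ by the pushforward theorem for integral currents. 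That its dimension is exactly $d-1$ (rather than lower) is not asserted in the statement and need not be addressed.

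Finally I would prove the two vanishing statements. For $\partial N^*(f)=0$: pushforward commutes with $\partial$, so $\partial N^*(f) = \nu_*\partial\partial(\D(f)\with\pi\inv A) = 0$ since $\partial\partial=0$. For $N^*(f)\with\alpha=0$, the key input is that $\D(f)$ is Lagrangian, i.e. $\D(f)\with\omega=0$ where $\omega=d\alpha$, and that $\alpha$ is the canonical contact form whose kernel contains the image of $\nu_*$ in the appropriate sense. Concretely, $\alpha$ on $T^*M\setminus\z$ is $1$-homogeneous along the fibers, so it descends (up to the homogeneity factor) under $\nu$; one checks that $\nu^*(\alpha_{S^*M}) = \ell\inv\alpha$ on $T^*M\setminus\z$, and then computes
$$
\int_{N^*(f)}\alpha\wedge\psi = \int_{\partial(\D(f)\with\pi\inv A)}\nu^*\alpha\wedge\nu^*\psi = \int_{\partial(\D(f)\with\pi\inv A)}\ell\inv\alpha\wedge\nu^*\psi.
$$
Using $\int_{\partial T}\eta = \int_T d\eta$ and $d\alpha=\omega$, this reduces to an integral of $d(\ell\inv)\wedge\alpha\wedge\nu^*\psi + \ell\inv\omega\wedge\nu^*\psi + (\text{terms with }d\nu^*\psi)$ against $\D(f)\with\pi\inv A$; the $\omega$-term dies by the Lagrangian property, and the remaining terms are handled by noting that $\alpha$ restricted to the fibers of $\pi$ vanishes on the tangent spaces of $\graph\partial f$ (since $\D(f)$ concentrates on $\graph df$, whose tangent planes are $\alpha$-isotropic by the closedness $d(\text{section})=0$ encoded in axiom (1)). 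I would instead run this more cleanly by invoking the identity $\alpha\wedge(\text{anything pulled back from the base}) $ together with axiom $(4')$: since $\langle\D(f),\pi,x\rangle = \delta_{(x,df(x))}$ for a.e.\ $x$, the current $\D(f)$ is, fiberwise, a Dirac mass, so $\D(f)\with\pi\inv A$ is a graph over $A$ and $\alpha$ pulled back to this graph is the tautological form evaluated on a section, namely $df$, whose graph has $\alpha$-isotropic (indeed $\alpha$-null after the boundary is taken) tangent planes; hence $N^*(f)\with\alpha=0$.

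The main obstacle I anticipate is the last step: making rigorous that $N^*(f)\with\alpha=0$, i.e.\ the Legendrian property of the conormal cycle. The subtlety is that $\D(f)$ is only an integral current, not a smooth graph, so "the tangent planes of $\graph df$ are $\alpha$-null" must be extracted from the Lagrangian axiom $\D(f)\with\omega=0$ combined with $\partial\D(f)=0$ via a slicing/Stokes argument rather than a pointwise computation, and one must track carefully the homogeneity factor $\ell$ relating $\alpha$ on $T^*M\setminus\z$ to the contact form on $S^*M$, ensuring that $d(\ell\inv)$ contributes nothing --- which ultimately again rests on $\D(f)$ being supported on the image of a closed section. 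This is exactly the content of the cited Theorem of \cite{fu94}, so in the paper's setting one would simply quote it; a self-contained argument would run through the Legendrian slice formalism of \cite{fu94}.
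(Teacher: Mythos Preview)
The paper does not prove this theorem; it simply quotes it from \cite{fu94} (note the $\square$ immediately following the display). So there is no ``paper's proof'' to compare against, and your proposal is an attempt at a self-contained argument.

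Your treatment of $\partial N^*(f)=0$ is correct: it follows from $\partial\circ\partial=0$ and the fact that pushforward commutes with $\partial$. The integrality argument has a soft spot---restricting an integral current by an arbitrary Borel set need not yield a current with finite-mass boundary, and your appeal to ``the constancy/slicing lemma \cite{gmt}~4.2.1 applied to the indicator of $A$'' does not supply this. In \cite{fu94} this is handled by realizing $N^*(f)$ as (the $\nu$-pushforward of) a slice $\langle \D(f),\ell,r\rangle$ for small $r>0$ (cf.\ Proposition~\ref{prop:cone structure} here), which is integral for a.e.\ $r$ by general slicing theory; one then shows the slice is independent of such $r$ using the weak-regularity/support estimate you describe.

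The Legendrian step $N^*(f)\with\alpha=0$ is where your sketch goes genuinely wrong. Your claimed fallback---that $\alpha$ restricts to zero along tangent planes of $\graph df$---is false: for smooth $f$ the pullback of $\alpha$ to the section $x\mapsto(x,df(x))$ is precisely $df$, which is typically nonzero. The Lagrangian axiom gives $\D(f)\with\omega=0$, not $\D(f)\with\alpha=0$, and your Stokes computation breaks down further because $\ell^{-1}$ is unbounded on $\spt(\D(f)\with\pi^{-1}A)$, which meets the zero section over $\operatorname{int} A$. The argument that actually works uses the cone structure (Proposition~\ref{prop:cone structure}): writing $\D(f)$ near the zero section over $A$ as $z_*A + m_*([0,r_0]\times N^*(A))$ and using $m^*\omega = dt\wedge p^*\alpha_S + t\,p^*d\alpha_S$, the Lagrangian condition $\D(f)\with\omega=0$ decomposes by degree in the $[0,r_0]$-factor into the two separate vanishings $N^*(A)\with\alpha_S=0$ and $N^*(A)\with d\alpha_S=0$. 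This is the mechanism in \cite{fu94}; your proposal does not reach it.
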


We show that this current depends only on the underlying set $A$:
\begin{thm}\label{thm:n def} If $A\subset M$ is a \WDC~ set, and $f,g$ are  \DC~auras for $A$, then 
$$
 N^*(f) =N^*(g).\quad \quad \square
$$
\end{thm}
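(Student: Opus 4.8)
The plan is to reduce the equality $N^*(f) = N^*(g)$ to a statement about the two \DC\ auras on a common scale of sublevel sets, and then to interpolate between them via the family $f_t := (1-t)f + tg$, $t\in[0,1]$, which by Corollary~\ref{cor:max} consists of \DC\ functions, and each of which is readily checked to be an aura for $A$ (nonnegativity is clear, $f_t\inv(0) = f\inv(0)\cap g\inv(0) = A$, and weak regularity of $0$ for $f_t$ follows from Lemma~\ref{lem:clarke of max} together with the weak regularity of $0$ for $f$ and $g$: if $x_i \to x_0 \in \bdry A$ with $f_t(x_i) > 0$ and $\xi_i \in \partial f_t(x_i) \subset (1-t)\partial f(x_i) + t\partial g(x_i)$ with $\xi_i \to 0$, then writing $\xi_i = (1-t)\eta_i + t\zeta_i$ and passing to subsequences one obtains a contradiction with weak regularity of whichever of $f,g$ has $x_i$ in the interior of its positivity set infinitely often). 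So it suffices to prove that $t\mapsto N^*(f_t)$ is locally constant, or even just to handle the case where $g$ is a small \DC\ perturbation of $f$ and then chain finitely many such steps together.

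The core of the argument will be a homotopy/continuity estimate for the differential cycles $\D(f_t)$. Here I would invoke the key consequences of Theorem~\ref{thm:ma addition} applied with $M$ merely a smooth manifold (no group needed in the trivial case $G = \{e\}$, or more simply a direct scaling-and-sum argument): $\D(f_t)$ depends on $t$ in a controlled way, and in particular one has a \emph{homotopy formula} of the shape $\D(f_1) - \D(f_0) = \partial H + (\text{error supported off } \pi\inv(\bdry A))$, where $H$ is an integral current in $T^*M$ obtained by pushing forward $\D(f_0)\times \cur{0,1}$ (or the analogous fibered construction) under the map $(\xi, t)\mapsto (1-t)\xi + t\,\Phi(\xi)$ coming from the \MA\ structure. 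The point of introducing $N^*(f) = \proj_*\partial(\D(f)\with\pi\inv A)$ is that, upon restricting to $\pi\inv A$, taking boundary, and projecting to $S^*M$, the homotopy current $H$ must be shown to have support in $\z$, i.e.\ in the zero section, so that $\proj_* H = 0$. This is exactly where the aura hypothesis does its work: weak regularity of the value $0$ guarantees that along the boundary $\bdry A$ the Clarke differentials of all the $f_t$ stay uniformly bounded away from $0$ (by the equivalent condition (2) in Definition~\ref{def:weakly reg}, with a single $\eps$ working for the whole segment by the same Minkowski-sum argument as above), so the straight-line homotopy between $df_0$ and $df_1$ over points of $\bdry A$ never crosses the zero section, and the relevant slices of $H$ vanish there.

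Concretely, the order of steps I would carry out: (i) verify that $f_t$ is a \DC\ aura for $A$ for every $t$, with a uniform weak-regularity constant $\eps$ on a fixed compact set, as sketched above; (ii) establish the homotopy formula relating $\D(f_0)$, $\D(f_1)$, and a homotopy current $H$ — this is essentially Theorem~\ref{thm:ma addition} in the degenerate isotropic case, or can be done directly by noting that $\D((1-t)f + tg)$ can be sliced out of an integral current on $T^*M\times[0,1]$ built from $\D(f), \D(g)$; (iii) show $\spt H \cap \pi\inv(\bdry A) \subset \z$ using the uniform weak regularity, hence $\proj_*(\partial H \with \pi\inv A)$ contributes nothing after the pushforward $\proj_*$, because $\proj$ kills everything in $\z$; (iv) combine (ii) and (iii) with the definition \eqref{eq:def nf} and the commutation of $\proj_*$, $\partial$, and $\with\pi\inv A$ (the last valid up to boundary terms controlled again by weak regularity) to conclude $N^*(f_0) = N^*(f_1)$; (v) chain finitely many steps to pass from an arbitrary pair $f,g$ to the conclusion.

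The main obstacle I anticipate is step (iii) — more precisely, making rigorous the claim that the homotopy current $H$, or rather the part of its boundary that survives restriction to $\pi\inv A$ and projection via $\proj$, is genuinely supported in the zero section. The subtlety is that $\D(f_t)$ is only known through its axioms, not as an honest graph, so ``the straight-line homotopy between $df_0$ and $df_1$'' must be interpreted at the level of currents and slices rather than pointwise; one must show that for a.e.\ $x\in\bdry A$ the slice $\langle H, \pi, x\rangle$ is a $1$-current supported on the segment $[df_0(x), df_1(x)] \subset T^*_xM$ (using axiom $(4')$ for $f_0$ and $f_1$), which by the uniform lower bound $\ell \ge \eps$ on $\partial f_t(x)$ avoids $0$; but the passage from this slicewise statement to the global support statement, and the interchange of $\partial$ with $\with\pi\inv A$ (which a priori introduces a contribution $\D(f_t)\with \bdry(\pi\inv A)$, i.e.\ a term living over $\bdry A$ that must itself be analyzed), will require care. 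This is precisely the kind of place where \cite{fu94} needed ad hoc support hypotheses; here the uniform weak regularity along the segment of auras should supply exactly what is needed, but the bookkeeping with slicing identities \eqref{eq:funct of slice}, \eqref{eq:slice boundary} and the mass bounds from axiom (3) is where the real work lies.
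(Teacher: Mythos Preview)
Your approach is genuinely different from the paper's, and it has a real gap at the very first step.

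\textbf{The gap in step (i).} Your argument for weak regularity of $f_t = (1-t)f + tg$ does not go through. You write that from $\xi_i = (1-t)\eta_i + t\zeta_i \to 0$ one obtains ``a contradiction with weak regularity of whichever of $f,g$ has $x_i$ in the interior of its positivity set infinitely often.'' But since $f$ and $g$ are auras for the \emph{same} set $A$, the positivity sets of $f$ and $g$ are both exactly $M\setminus A$; every $x_i$ lies in both. Weak regularity of $f$ and $g$ then gives you $\ell(\eta_i)\ge \eps_f$ and $\ell(\zeta_i)\ge \eps_g$, and nothing more. This does \emph{not} prevent $(1-t)\eta_i + t\zeta_i \to 0$: the two sequences could become asymptotically antiparallel. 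Ruling this out amounts to showing that $\noreps f$ and $s(\noreps g)$ are fiberwise disjoint over $\bdry A$ --- essentially that the ``outward'' directions determined by two different auras for $A$ agree up to a cone that excludes antipodes. That statement may well be true, but it is not obvious, and proving it seems to require knowing something about $A$ that is independent of the aura --- which is dangerously close to what you are trying to establish. Compare Corollary~\ref{cor:transverse intersection}: the condition \eqref{eq:strong transverse} is exactly the hypothesis needed to make a sum of auras an aura, and it is \emph{not} automatic.

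\textbf{The homotopy current.} Even granting step (i), your step (ii) is problematic. Theorem~\ref{thm:ma addition} concerns $f + g\circ\gamma^{-1}$ for a.e.\ $\gamma$ in a transitive group; the ``trivial case $G=\{e\}$'' gives no information. To build the current $H$ you would need an integral current in $T^*M\times[0,1]$ whose slices at $t$ are $\D(f_t)$, and it is not clear how to manufacture this from $\D(f)$ and $\D(g)$ alone. The map $(\xi,t)\mapsto (1-t)\xi + t\Phi(\xi)$ you allude to presupposes a fiberwise map $\Phi$ relating $\D(f)$ to $\D(g)$, but no such map is available at the level of currents.

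\textbf{What the paper does instead.} The paper takes a completely different route: it localizes via coordinate charts to open subsets of $\R^d$, then for each point intersects with a small generic ball $\gamma B$ by \emph{adding} the ball's standard aura $\gamma^*h$ to each of $f,g$ (Proposition~\ref{prop:max dc} guarantees this yields auras for a.e.\ $\gamma$). This produces two auras for the \emph{compact} set $\psi(A\cap U)\cap\gamma B\subset\R^d$, and the equality of their conormal cycles is then imported directly from Theorem~1.2 of \cite{PR} --- the previously established uniqueness result for compact \WDC\ sets in Euclidean space, whose proof rests on the uniqueness theorem for normal cycles in \cite{fu94}. So the paper's argument is a reduction to known results, not a self-contained homotopy.
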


\begin{definition}\label{def:n*A} {\rm We define this common value to be $N^*(A)$.}
\end{definition}

%
%

\begin{proof}[Proof of Theorem \ref{thm:n def}] We show that  $N^*(f), N^*(g)$ agree when restricted to any coordinate neighborhood $(\psi,U)$. Let $V:=\psi(U)\subset \Rd$, and consider the \DC~ functions $f\circ \psi\inv, g\circ \psi\inv:V\to \R$, which are both auras for $\psi (A\cap U)\subset V$, with
\begin{align*}
 \tilde \psi\inv_*N^*(f\circ \psi\inv ) &= N^*(f)\with \pi\inv(U), \\
 \tilde \psi\inv_*N^*(g\circ \psi\inv) &= N^*(g)\with \pi\inv(U).
\end{align*}
Thus it is sufficient to show that every point $x \in V$ admits a neighborhood $W\subset V$ such that
$$
N^*(f\circ \psi\inv )\with \pi\inv(W) =  N^*(g\circ \psi\inv)\with \pi\inv(W).
$$

Put $r>0$  for the distance from $x$ to the complement of $V$ in $\Rd$, and let $h(y):= \max (0, |y|-\frac r 2)$ denote the standard aura for the closed disk $ B: =\bar B(0,\frac r 2)$.
By the proof of Proposition \ref{prop:max dc} below , for a.e. euclidean motion $\gamma$ the functions $(f\circ \psi\inv) +\gamma^*h, (g\circ\psi\inv)+ \gamma^*h$ are both auras for $\psi(A\cap U) \cap \gamma B$. Clearly such $\gamma $ may be chosen so that $x$ lies in the interior of $ \gamma B$, and hence $\gamma B \subset V$. Therefore these functions may be extended to auras for $\psi(A\cap U) \cap \gamma B$, considered as a  subset of $\Rd$.

It follows from Theorem 1.2 of \cite{PR} that 
$$
N^*((f\circ \psi\inv) + \gamma^*h) = N^*((g\circ \psi\inv) + \gamma^*h).
$$
Since the restrictions of these currents to the interior of $\gamma  B$ agree with those of $N^*(f \circ\psi\inv), N^*(g\circ\psi\inv)$ respectively, this completes the proof.
\end{proof}

\begin{prop}\label{prop:support of n*} If $A$ is a compact \WDC~ set with aura $f$, then
$$
\spt N^*(A) \subset \noreps f
$$
for all sufficiently small $\eps >0$.
\end{prop}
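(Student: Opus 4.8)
The plan is to trace through the definition $N^*(A)=N^*(f)=\proj_*\partial(\D(f)\with\pi\inv A)$ from Definition~\ref{def:n*A} and~\eqref{eq:def nf}, and to show that $\partial(\D(f)\with\pi\inv A)$ is supported in a \emph{compact} subset $Z$ of $T^*M$ that is bounded away from the zero section and on which $\proj$ lands inside $\noreps f$. So set $Z:=\spt\partial(\D(f)\with\pi\inv A)$. First I would note that $Z$ is compact: since $A$ is compact and $\partial f$ is locally bounded (being the Clarke differential of the locally Lipschitz function $f$) with closed graph, $\graph\partial f\cap\pi\inv A$ is compact, so by Lemma~\ref{lem:support df} the current $\D(f)\with\pi\inv A$, hence also its boundary, is compactly supported.

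Next I would locate $Z$ more precisely. Since $\partial\D(f)=0$ we have $\partial(\D(f)\with\pi\inv A)=-\,\partial(\D(f)\with\pi\inv(M\setminus A))$, so $Z$ is contained in $\spt(\D(f)\with\pi\inv A)\cap\spt(\D(f)\with\pi\inv(M\setminus A))$. Combining Lemma~\ref{lem:support df} with the elementary inclusions $\spt(T\with E)\subset E\cap\spt T$ for closed $E$ and $\spt(T\with U)\subset\overline{U\cap\spt T}$ for open $U$, this gives
$$
Z\ \subset\ \bigl(\graph\partial f\cap\pi\inv A\bigr)\ \cap\ \overline{\,\graph\partial f\cap\pi\inv(\{f>0\})\,}.
$$
In particular every $\zeta\in Z$ satisfies $\zeta\in\partial f(\pi(\zeta))$ with $\pi(\zeta)\in\bdry A$ (so $f(\pi(\zeta))=0$), and $\zeta$ is a limit of covectors $\xi_i\in\partial f(x_i)$ with $f(x_i)>0$.

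The crucial step, and the only place where weak regularity of the value $0$ is used, is to check that $Z$ is disjoint from the zero section $\z$. Suppose $\zeta=0_{x_0}\in Z$; then $x_0\in\bdry A$, so $f(x_0)=0$, while by the displayed inclusion there are $x_i\to x_0$ with $f(x_i)>f(x_0)=0$ and $\xi_i\in\partial f(x_i)$ with $\xi_i\to 0$, contradicting characterization~(1) of weak regularity in Definition~\ref{def:weakly reg}. (This disjointness is in any case implicit in the well-definedness of $N^*(f)$ asserted in Theorem~\ref{thm:properties of n*}.) Since $Z$ is compact, $\z$ is closed, and the length function $\ell$ is continuous and strictly positive off $\z$, there is $\eps_0>0$ with $\ell\ge\eps_0$ on $Z$.

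It remains to assemble the pieces. For every $\zeta\in Z$ and every $\eps\le\eps_0$ we have $\ell(\zeta)\ge\eps$ and $\zeta\in\partial f(x)$ with $x=\pi(\zeta)\in\bdry A$, so $\proj(\zeta)\in\noreps f$; hence $\proj(Z)\subset\noreps f$. Since $Z$ is compact and disjoint from $\z$, the map $\proj$ is continuous (indeed smooth) on a neighborhood of $Z$ and $\proj(Z)$ is compact, so
$$
\spt N^*(A)=\spt\proj_*\partial(\D(f)\with\pi\inv A)\subset\proj(Z)\subset\noreps f
$$
for all $\eps\le\eps_0$, which is the assertion (``sufficiently small'' meaning $\eps\le\eps_0$). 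I expect the only genuinely delicate point to be the disjointness $Z\cap\z=\emptyset$; everything else is routine bookkeeping with supports of restrictions and pushforwards of currents.
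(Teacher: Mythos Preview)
Your proof is correct and follows essentially the same approach as the paper, which simply says ``This follows at once from Lemma~\ref{lem:support df}.'' You have carefully unpacked what the paper leaves implicit: the support bookkeeping for $\D(f)\with\pi\inv A$ and its boundary via Lemma~\ref{lem:support df}, the use of weak regularity to avoid the zero section, and the compactness argument producing the threshold $\eps_0$.
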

\begin{proof} This follows at once from Lemma \ref{lem:support df}.
\end{proof}

\subsubsection{Conic cycles} We will need the following alternative construction of the conormal cycle, a restatement of Proposition 1.3 and equations (1.3d), (1.3g), (1.4c) of \cite{fu94}. We identify $S^*M $ with $\ell\inv(1)$ and define the maps
\begin{align*}
\nu: T^*M \setminus \z \to S^*M, &\quad \nu(\xi):= \frac{\xi}{\ell(\xi)}, \\
m: \R \times S^*M \to T^*M, & \quad m(t,\bar \xi):= t \xi, \\
m_t: T^*M \to T^*M & \quad m_t(\xi):= t \xi, t \in \R, \\
z: = \z&:M \to T^*M
\end{align*}

\begin{prop}\label{prop:cone structure} Let $f \in \DC(M)$ be an aura for $A \csubset M$. Suppose $U$ is a neighborhood of $A$ and $r_0>0$ is small enough that $\ell(\xi) >r_0$ whenever $\pi(\xi ) \in U\setminus A$. Then 
\begin{align}
\label{eq:cone 1}\D(f) \with (\pi\inv(U) \cap \ell\inv[0,r_0]) &= z_*A + m_*([0,r_0] \times N^*(A)), \\
\label{eq:cone 2}N^*(A) &= \nu_*\langle \D(f)\with \pi\inv (U), \ell ,r\rangle \quad \text{ for all } r \in (0,r_0),\\
\label{eq:cone 3}\vec N^*(A):= z_*A + m_*([0,\infty) \times N^*(A)) &= \lim_{t\to \infty} m_{t*} (\D(f) \with \pi \inv (U)),\\
\label{eq:cone 4}N^*(A)&= \langle \vec N^*(A),\ell, 1\rangle.
 \quad \quad\square
\end{align}
\end{prop}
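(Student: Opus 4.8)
The plan is to read Proposition~\ref{prop:cone structure} off of Proposition~1.3 and equations~(1.3d), (1.3g), (1.4c) of \cite{fu94}, of which it is essentially a restatement in the present notation. The only thing that must genuinely be verified is that a DC~ aura $f$ for which $0$ is a weakly regular value qualifies as a ``nondegenerate aura'' in the sense of \cite{fu94}; this is immediate once one combines the support inclusion $\spt\D(f)\subset\graph\partial f$ (Lemma~\ref{lem:support df}) with the assumed weak regularity of $0$. It will be convenient to indicate how the four assertions fit together, starting from \eqref{eq:cone 1}.

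First I would make a support reduction. By Lemma~\ref{lem:support df} we have $\spt\D(f)\subset\graph\partial f$, and since $0$ is a weakly regular value of $f$, the standing hypothesis on $U$ and $r_0$ says precisely that every element of $\graph\partial f$ lying over $U\setminus A$ has $\ell$-length exceeding $r_0$. Hence $\D(f)\with(\pi\inv(U\setminus A)\cap\ell\inv[0,r_0])=0$, and, $A$ being relatively closed in $U$,
$$
\D(f)\with(\pi\inv(U)\cap\ell\inv[0,r_0]) = \D(f)\with(\pi\inv(A)\cap\ell\inv[0,r_0]).
$$
Identifying the right-hand side with $z_*A + m_*([0,r_0]\times N^*(A))$, with $N^*$ as in \eqref{eq:def nf}, is the conic structure of the differential cycle of an aura near its zero level set; with the support condition above and the weak regularity of $0$ serving as the regularity hypotheses of \cite{fu94}, this is Proposition~1.3 together with (1.3d) and (1.3g) there, and it yields \eqref{eq:cone 1}.

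Granting \eqref{eq:cone 1}, the other three formulas follow by routine slicing and homothety. For \eqref{eq:cone 2}, slice \eqref{eq:cone 1} by $\ell$ at $r\in(0,r_0)$: the slice of $z_*A$ vanishes since $z_*A$ is carried by $\ell\inv(0)$; because $\ell\circ m$ is the projection $[0,r_0]\times S^*M\to[0,r_0]$, commutativity of pushforward and slicing \eqref{eq:funct of slice} and the convention \eqref{eq:slice orientation} make the slice of $m_*([0,r_0]\times N^*(A))$ the image of $N^*(A)$ under $\bar\xi\mapsto r\bar\xi$, so applying $\nu$ and using $\nu(r\bar\xi)=\bar\xi$ returns $N^*(A)$, while the left side becomes $\nu_*\langle\D(f)\with\pi\inv(U),\ell,r\rangle$ once one notes that slicing at a level $r<r_0$ misses the part of $\D(f)$ over $U$ with $\ell\ge r_0$. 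For \eqref{eq:cone 3}, split $\D(f)\with\pi\inv(U)$ into its restrictions to $\ell\inv[0,r_0]$ and to $\ell\inv(r_0,\infty)$; pushing forward by $m_{t*}$ and using $m_t\circ m(s,\bar\xi)=m(ts,\bar\xi)$, $m_t\circ z=z$, and $\ell\circ m_t=t\,\ell$, the first piece becomes $z_*A+m_*([0,tr_0]\times N^*(A))$, which converges to $\vec N^*(A)=z_*A+m_*([0,\infty)\times N^*(A))$ as $t\to\infty$, while the second piece is carried by $\ell\inv[r_0,\infty)$, so its $m_{t*}$-image is carried by $\ell\inv[tr_0,\infty)$ and tends to $0$. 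Finally \eqref{eq:cone 4} (equation~(1.4c) of \cite{fu94}) comes from slicing $\vec N^*(A)$ by $\ell$ at $1$: the $z_*A$-term contributes nothing, and the slice of $m_*([0,\infty)\times N^*(A))$ is the image of $N^*(A)$ under $m(1,\cdot)$, which is the identity map of $S^*M=\ell\inv(1)$, so the slice equals $N^*(A)$.

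I expect the main obstacle to be the conic-structure identification inside \eqref{eq:cone 1}: it is imported wholesale from \cite{fu94}, so beyond matching orientation conventions the only task is to confirm the nondegeneracy of the aura, which Lemma~\ref{lem:support df} and the weak regularity hypothesis supply. The rest is routine: in \eqref{eq:cone 3} one checks that $m_*([0,tr_0]\times N^*(A))\to m_*([0,\infty)\times N^*(A))$ and that the $m_{t*}$-image of the $\ell>r_0$ part tends to $0$ hold as convergence of currents, which is clear because $\ell$ is bounded on the support of any compactly supported test form and both statements become vacuous once $tr_0$ exceeds that bound; and $\vec N^*(A)$ has locally finite mass since the mass of $m_*([0,R]\times N^*(A))$ over $\pi\inv(K)$ is at most a constant multiple of $R\,\mass(N^*(A)\with\pi\inv(K))$, which is finite by Theorem~\ref{thm:properties of n*}.
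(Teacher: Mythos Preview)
Your proposal is correct and matches the paper's approach exactly: the paper gives no proof at all, simply introducing the proposition as ``a restatement of Proposition~1.3 and equations~(1.3d), (1.3g), (1.4c) of \cite{fu94}'' and closing with a $\square$. Your write-up supplies the details the paper omits---namely, that Lemma~\ref{lem:support df} together with weak regularity makes $f$ a nondegenerate aura in the sense of \cite{fu94}, and how \eqref{eq:cone 2}--\eqref{eq:cone 4} follow from \eqref{eq:cone 1} by slicing and homothety---so if anything you are more explicit than the paper.
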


{\bf Remark.} Note that the compactness of $A$ ensures that such $U,r_0$ exist. The limit in \eqref{eq:cone 3}  exists in a particularly strong sense: for any $C>0$, there exists $t_0 $ such that for $t >t_0$ the restrictions to $\ell\inv[0,C)$ of the left hand side and the expression under the limit agree.

\section{The size of the Clarke differential of a \DC~function}
%
%

The main result of this section follows. It is a sharpened version of Proposition 7.1 of \cite{PR}.

\begin{thm}\label{WDCbundle}
Let $f$ be a  DC aura on a Riemannian manifold $M$ of dimension $d$ and $\eps >0$. Then ${\noreps}f$ has locally finite $(d-1)$-content. 
\end{thm}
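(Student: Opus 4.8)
The plan is to reduce the global, manifold-level statement to a local, Euclidean-coordinate statement about DC functions $f: V \to \R$ with $V \subset \Rd$ open, since Minkowski content is a metric notion and the claim is clearly local: if $M = \bigcup_k U_k$ with coordinate patches $(\psi_k, U_k)$, and each $\psi_k$ is bi-Lipschitz on compact subsets, then finite $(d-1)$-content transfers between $S^*M$ and $S^*V$ under $\tilde\psi_k$, and one covers any compact piece of $\bdry A$ by finitely many such patches. So it suffices to treat $f \in \DC(V)$ with $0$ a weakly regular value, $A = f\inv(0)$, and show that for each $\eps > 0$ and each $K \csubset V$ the set $\noreps f \cap \pi\inv(K)$ has finite $(d-1)$-content in $S^*V \cong V \times S^{d-1}$.

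Next I would unfold the definition: $\noreps f$ is the $\nu$-image of the set of $(x, v)$ with $x \in \bdry A$, $v \in \partial f(x)$, $|v| \ge \eps$. By weak regularity (condition (2) of Definition \ref{def:weakly reg}), there is $\eps_0 = \eps_0(K, \eps) > 0$ such that any such $x$ with $0 < f(x) < \eps_0$ already has $|v| \ge \eps$ for all $v \in \partial f(x)$; combined with the fact that $\bdry A \subset f\inv(0)$, this lets me thicken the set of relevant base points to the slab $\{0 \le f < \eps_0\}$ without losing control. The core of the argument is then to bound the covering number $\#(\noreps f \cap \pi\inv(K), \delta)$ by $C \delta^{-(d-1)}$ as $\delta \downarrow 0$, using Lemma \ref{lem:box}(1). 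The mechanism for this is the Pavlica--Zaj\'i\v cek construction: writing $f = g - h$ locally with $g, h$ convex, one relates support elements $(x, v) \in \graph \partial f$ of (the graph of) $f$ to pairs of parallel supporting hyperplanes of two convex bodies built from $g$ and $h$ in $\R^{d+1}$, and then invokes the Ewald--Larman--Rogers type estimate. Here the sharpening over Proposition 7.1 of \cite{PR} is precisely that one must count not merely the \emph{directions} of such common support hyperplanes (a $d$-dimensional, i.e. $(d-1)$-Hausdorff-dimensional after projecting to the sphere, set) but the pairs consisting of a base point together with such a direction, and one must do so in the Minkowski (box-counting) sense rather than merely Hausdorff; this is what Lemma \ref{duality} / Theorem A supply, and the passage from their statement to an effective covering bound for $\noreps f$ is the crux.

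Concretely, I expect the proof to proceed: (i) localize and fix $g, h$ convex on a neighborhood of $K$; (ii) for $\delta$ small, use the uniform lower bound $|v| \ge \eps$ on the normal component to pass between a ball of radius $\delta$ in $S^*V$ around $(x, \nu(v))$ and a corresponding small cap-type neighborhood of the common supporting hyperplane data of the graph of $g$ and the graph of $h$ — roughly, $(x, v)$ being an $\eps$-regular support element at height $< \eps_0$ forces $x$ into a cap of the convex set $\epi g \cap (\text{reflected } \epi h)$ controlled by the direction, so a $\delta$-net of directions together with a $\delta$-net inside each associated cap covers everything; (iii) bound the total count by the Minkowski-$(d-1)$ finiteness of the direction-with-footpoint set coming from the refined Pavlica--Zaj\'i\v cek/ELR estimate. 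The main obstacle is step (ii): making the geometry of ``base point lies in a thin cap determined by the normal direction, with thinness quantitatively tied to $\delta$ and to the regularity parameter $\eps$'' precise enough that the product structure of Minkowski content (Lemma \ref{lem:box}(\ref{item:mink prod})) can be applied and the counts multiply correctly — in particular keeping the constants uniform as the supporting hyperplane direction varies over the sphere, and handling the curvature/Lipschitz distortion of passing to and from the graph picture. Once that quantitative cap estimate is in hand, summing a $\delta^{-(d-1)}$ bound is routine.
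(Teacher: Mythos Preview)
Your localization to $\Rd$ is fine, and you correctly identify Lemma~\ref{duality} (locally finite $d$-content of $\graph\partial f$) as the engine. But the passage from that lemma to the theorem---your steps (ii)--(iii)---is where you are missing the key idea, and the ``main obstacle'' you flag is one the paper sidesteps entirely.

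The point is that $f$ is an \emph{aura}: it is nonnegative with $A=f\inv(0)$, so every $x\in\bdry A$ is a local minimum of $f$, and hence $0\in\partial f(x)$ by Lemma~\ref{lem:local extremum}. If $(x,u)\in\noreps f$ then $tu\in\partial f(x)$ for some $t\ge\eps$; convexity of $\partial f(x)$ then gives the whole segment $[0,\eps u]\subset\partial f(x)$. Thus $((x,u),t)\mapsto(x,tu)$ is a biLipschitz embedding of $\noreps f\times[0,\eps]$ into $\graph\partial f$, and the finite $d$-content of the latter immediately forces finite $(d-1)$-content of $\noreps f$. Your proposal instead tries to gain the extra dimension by thickening in the \emph{base} direction, into the slab $\{0<f<\eps_0\}$ via weak regularity; but there is no product structure there, since $\partial f$ is only upper semicontinuous and the normalized direction $\nu(v)$ need not vary in a controlled way as $x$ leaves $\bdry A$. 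Re-entering the cap machinery of Lemma~\ref{lem:sharp elr} to compensate, as you suggest, is both redundant once Lemma~\ref{duality} is in hand and not clearly sufficient. Use the aura hypothesis directly: the zero section lies in $\partial f$ along $\bdry A$, and that radial segment in the \emph{fiber} is the free dimension you need.
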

 
Using local coordinates it is clearly enough to prove the theorem in the Euclidean case, so we shall assume throughout this section that $M=\R^d$.
We follow the scheme of Pavlica and Zaj\'\i\v cek \cite{PZ}, relating a fundamental result about the boundary structure of convex bodies (due to Ewald, Larman and Rogers \cite{ELR}) to the set of tangents to the graph of a DC function. The classical result of \cite{ELR} is enough to establish the conclusion of Theorem \ref{thm:n def} in case the ambient manifold $M = \Rd$ (viz.\ Proposition~7.1 of \cite{PR}). However, for our present purposes we need the following more detailed version, where we use the notation of Section \ref{sect:convex}:

\begin{lem}\label{lem:sharp elr} Let $A,B$ be compact convex subsets of $\R^d$. Then the set
$$
\Sigma_{A,B}:=\left\{\left(x-y,\xi \right)\in \Rd \times S^{d-1}:  \xi \in \nor(A,x) \cap \nor(B,y) \right\}
$$
has finite $(d-1)$-content.

\end{lem}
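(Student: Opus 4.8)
The goal is to bound, via covering numbers, the set $\Sigma_{A,B}$ of pairs $(x-y,\xi)$ where $\xi$ is a common outward normal to $A$ at $x$ and to $B$ at $y$. The natural reduction is first to discretize in the normal direction: for $\xi$ ranging over an $\eps$-net of $S^{d-1}$, we must count $\eps$-balls needed to cover the set of difference vectors $x-y$ with $x\in F(A,\xi):=\{x\in A: \xi\in\nor(A,x)\}$ and $y\in F(B,\xi)$, where these are the faces (exposed faces) of $A$ and $B$ in the direction $\xi$. Since the sphere needs $\sim\eps^{-(d-1)}$ balls, by Lemma \ref{lem:box}(1) it suffices to show that for each such $\xi$ the set $F(A,\xi)-F(B,\xi)$ can be covered by $O(1)$ balls of radius $C\eps$, \emph{uniformly in $\xi$}, after a preliminary decomposition of $A,B$ into boundedly many pieces — or more honestly, that the total covering number summed over the net is $O(\eps^{-(d-1)})$. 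This is where the Ewald–Larman–Rogers/Pavlica–Zaj\'i\v cek mechanism enters: the point is that $F(A,\xi)$ and $F(B,\xi)$ are typically low-dimensional, and the contribution of directions $\xi$ for which they are large is controlled.

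**Key steps.** (1) Reduce to the case where $A,B$ are contained in a fixed large ball, and replace $\Sigma_{A,B}$ by its image under $(x-y,\xi)\mapsto(h_A(\xi)-x\cdot\xi,\ldots)$ or similar auxiliary coordinates — i.e. set up the cap decomposition. For $t>0$, the caps $C(A,\xi,t)$ and $C(B,\xi,t)$ shrink to the faces as $t\to0$; the essential estimate is that if $(x-y,\xi)\in\Sigma_{A,B}$ and $\xi'$ is within $\eta$ of $\xi$, then $x$ lies in $C(A,\xi',t)$ for $t\asymp\eta\cdot\diam A$, and similarly $y\in C(B,\xi',t)$. (2) Thus it is enough to estimate, for each $\xi'$ in an $\eps$-net of $S^{d-1}$, the covering number of $\bigl(C(A,\xi',c\eps)-C(B,\xi',c\eps)\bigr)$ at scale $\eps$. (3) Now invoke the quantitative heart of the argument, which is the refinement of Pavlica–Zaj\'i\v cek alluded to in the introduction: the total number of $\eps$-balls needed to cover $\bigcup_{\xi'}\{(x-y,\xi')\}$ over all $\xi'$ in the net, with $x\in C(A,\xi',c\eps)$, $y\in C(B,\xi',c\eps)$, is $O(\eps^{-(d-1)})$. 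This in turn is proven by the standard device of bounding the \emph{volume} of the $\eps$-neighborhood: the set of $(x,y,\xi)\in A\times B\times S^{d-1}$ with $x\in C(A,\xi,c\eps)$, $y\in C(B,\xi,c\eps)$ has volume $O(\eps^{d-1})\cdot$(something), because for fixed $\xi$ the cap $C(A,\xi,c\eps)$ has volume $O(\eps)\cdot$(lower order) unless $A$ degenerates in that direction, and one integrates. (4) Finally, translate the volume bound back into a covering-number/Minkowski-content bound for $\Sigma_{A,B}$ itself using Lemma \ref{lem:box}(1), and apply Lemma \ref{lem:box}(3) if a product structure is needed.

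**The main obstacle.** The genuinely hard part is step (3): controlling the \emph{simultaneous} smallness of the two caps $C(A,\xi,c\eps)$ and $C(B,\xi,c\eps)$ for \emph{a common} direction $\xi$. For a single convex body $A$, the classical ELR estimate says that the set of directions $\xi$ for which $F(A,\xi)$ contains a segment has dimension $\le d-2$ in the sphere; but here we need a \emph{two-body, quantitative} version phrased in terms of the difference set $F(A,\xi)-F(B,\xi)$ and measured by Minkowski content rather than Hausdorff dimension, because Hausdorff measure does not behave well under the products we will later take. The combinatorial/geometric estimate that makes this work is the one extracted from \cite{ELR} together with the Pavlica–Zaj\'i\v cek construction \cite{PZ}: one must carefully track how the ``width'' parameter $t$ of the cap trades off against the angular parameter $\eta$ and the linear covering scale $\eps$, and verify that the resulting bookkeeping closes up to give exactly the exponent $d-1$ and not something larger. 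I expect the proof to proceed by first establishing the estimate when $A,B$ are polytopes (where faces are genuinely lower-dimensional and the count is finite and explicit), and then passing to general convex bodies by approximation, taking care that the covering-number bounds are stable under Hausdorff limits of $A$ and $B$.
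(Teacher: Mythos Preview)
Your plan has a genuine gap at step (3), and the bookkeeping you sketch does not close. If you discretize $S^{d-1}$ by an $\eps$-net and, for each net direction $\xi'$, cover $C(A,\xi',c\eps)-C(B,\xi',c\eps)$ by balls of radius $\eps$, the count already blows up in the simplest case $A=B=$ unit ball: each cap of width $c\eps$ has transverse diameter $\sim\sqrt\eps$, so the difference set needs $\sim\eps^{-d/2}$ balls of radius $\eps$, and multiplying by $\eps^{-(d-1)}$ net points gives a total far exceeding $\eps^{-(d-1)}$. The vague volume argument you propose (``the cap has volume $O(\eps)\cdot$(lower order)'') is not correct in general and does not rescue this. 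Your fallback of proving the polytope case first and approximating also does not work: upper Minkowski content (equivalently covering-number bounds) is not lower semicontinuous under Hausdorff convergence of $A,B$, so a bound for polytopes does not pass to limits.

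What the paper actually does is quite different from your outline, and the differences are exactly the missing ideas. First, one passes to $\tilde A=A+B(0,1)$, $\tilde B=B+B(0,1)$ and works with the \emph{Minkowski sum} $K=\tilde A+\tilde B$; this couples the two bodies and lets one invoke the efficient cap decomposition of $\partial K$ from \cite[Lemma~5]{ELR}, which covers $\partial K$ by caps $C(K,n_i,t_i)$ of width $t_i\asymp\eps$ with $\sum_i\vol_d C(K,n_i,t_i)\le D'\eps$. Second, the fattening by a unit ball is essential twice: it forces the normals over a cap of width $t$ to vary by at most $O(\sqrt t)$ (Lemma~\ref{diameter'}), and it guarantees the cap base contains a ball of radius $\sqrt t$, so Lemma~\ref{cover} applies. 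Third, the correct covering scale is $\sqrt\eps$, not $\eps$: one covers each $\Delta C(K,n_i,2t_i)\times N_{n_i,t_i}$ by $Q_i$ sets of diameter $O(\sqrt{t_i})$, and the volume control gives $(\sqrt\eps)^{d-1}\sum_i Q_i\le C$. None of these three devices---the sum body $K$, the unit-ball fattening, and the $\sqrt\eps$ scale---appear in your plan, and without them step (3) cannot be carried out.
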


Following the approach of \cite{PZ}, we will combine this Lemma with a duality between the space of tangents to graphs of DC functions $f-g$ and the structure of $\Sigma_{A,B}$, with $A,B$ taken as the epigraphs of the  convex conjugates (Legendre transforms) of $f,g$. This will establish  a natural overestimate of the size of the support of the differential cycle of a DC function; exploiting the enhancements introduced in Lemma~\ref{lem:sharp elr}, this sharpens Proposition~7.1 of \cite{PZ}  (see Lemma~\ref{duality}). A simple argument then shows that this yields the desired estimate of the Minkowski content of $\noreps f$ for DC auras $f$.

{\bf Remark.} The corresponding lemma of \cite{ELR} was weaker in two senses: it applies only to the projection of $\Sigma_{A,B}$ to the first ($\Rd$) factor, and  it concludes only that this projection has  locally finite $(d-1)$-dimensional Hausdorff measure. As an aside from our main application, we present at the end of this section a proof of a sharper version of the main theorem of \cite{ELR} incorporating both of our improvements.

%

%

%

\subsection{Normals to pairs of convex sets}
In this section we prove Lemma~\ref{lem:sharp elr}.

\begin{lem}\label{cover}
Let $d\in\en.$ There is a constant $C_d$ such that for every $r>0$ the following statement holds:
If $K\subset\er^d$ is a convex body and $r\leq\wth K$ then $K$ can be covered by $M$ balls of radius $r$ such that $Mr^d\leq C_d\cdot\vol_d(K)$. 
\end{lem}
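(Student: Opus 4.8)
The plan is to prove the statement by reducing a general convex body $K$ with $\wth K \geq r$ to a situation controlled by its inradius and a John-type ellipsoid, and then to invoke a standard volume-comparison packing argument. First I would recall that since $\wth K \geq r$, the body $K$ contains a ball of radius comparable to $r$; more precisely, a classical estimate (e.g. via Steinhagen's inequality, or simply the fact that the inradius of a convex body in $\er^d$ is bounded below by a dimensional constant times its width) gives $r_{\mathrm{in}}(K) \geq c_d \, \wth K \geq c_d r$ for some $c_d>0$ depending only on $d$. Hence $\vol_d(K) \geq \omega_d (c_d r)^d = c_d^d\,\omega_d\, r^d$, so $K$ contains a ``reference'' amount of volume on the scale $r$.

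Next I would produce the covering. Cover $\er^d$ by a cubical grid of side length $2r/\sqrt d$, so that each cube has diameter $2r$ and is therefore contained in a ball of radius $r$; let $\mathcal Q$ be the (finite) collection of such cubes that meet $K$, and let $M := \card \mathcal Q$. Each cube $Q \in \mathcal Q$ is contained in the $r$-neighborhood $K_r$ of $K$, so $M \cdot (2r/\sqrt d)^d = \sum_{Q\in\mathcal Q}\vol_d(Q) \leq \vol_d(K_r)$. It now suffices to bound $\vol_d(K_r)$ by a dimensional constant times $\vol_d(K)$. This is where the lower bound on the width enters: by convexity, $K_r \subset (1 + r/r_{\mathrm{in}}(K))\,K$ (dilating $K$ about its incenter), and since $r/r_{\mathrm{in}}(K) \leq 1/c_d$ we get $\vol_d(K_r) \leq (1 + 1/c_d)^d \vol_d(K)$. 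Combining, $M \leq (2/\sqrt d)^{-d}(1+1/c_d)^d (2/\sqrt d)^{-d}\cdots$ — more cleanly, $M r^d \leq (\sqrt d/2)^d (1+1/c_d)^d \vol_d(K) =: C_d \vol_d(K)$, and since each cube in $\mathcal Q$ sits inside a ball of radius $r$ this is exactly the assertion.

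The main obstacle I anticipate is the inclusion $K_r \subset (1+r/r_{\mathrm{in}}(K))K$: one must be careful that the dilation is taken about an incenter of $K$ and verify that every point within distance $r$ of $K$ lands in the dilate. For a point $p$ with $\dist(p,K)\le r$, write $p = q + v$ with $q\in K$, $|v|\le r$; if $o$ is an incenter so that $o + r_{\mathrm{in}}(K)\,v/|v| \in K$, then $p$ is a convex combination of $q$ and a suitably scaled translate, giving $p \in (1+r/r_{\mathrm{in}}(K))(K - o) + o$. The other ingredient, the width-to-inradius bound $r_{\mathrm{in}}(K)\ge c_d \wth K$, is classical (Steinhagen); I would simply cite it. Everything else is the routine grid-covering estimate recorded in Lemma \ref{lem:box}.
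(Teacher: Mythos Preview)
Your argument is essentially correct, modulo one small slip: a cube of side $2r/\sqrt d$ has diameter $2r$, so a cube meeting $K$ is only guaranteed to lie in $K_{2r}$, not $K_r$. This changes nothing structurally---just replace $K_r$ by $K_{2r}$ and the dilation factor $(1+1/c_d)^d$ by $(1+2/c_d)^d$. The inclusion $K_{s}\subset o+(1+s/r_{\mathrm{in}}(K))(K-o)$ for an incenter $o$ is indeed correct and is most cleanly seen via Minkowski sums: $B(0,s)=(s/\rho)B(0,\rho)\subset (s/\rho)(K-o)$, and for a convex set $A\ni 0$ one has $A+\lambda A=(1+\lambda)A$.

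Your route differs from the paper's. The paper invokes \cite[Lemma~7]{ELR} as a black box: that lemma already provides a covering of $K$ by $M_1$ balls of diameter $\sqrt d\cdot\wth K$ with $M_1(\wth K)^d\le 2^d\sqrt d\cdot d!\,\vol_d(K)$, and one then subdivides each such ball into balls of radius $r$ at the usual cost. Your approach is more self-contained: rather than importing the ELR covering lemma, you use the width--inradius inequality (Steinhagen, or an easy John-ellipsoid consequence) to control $\vol_d(K_{2r})/\vol_d(K)$ and then count grid cubes. Both arguments are short; yours trades one external citation (ELR) for another (Steinhagen), and has the mild advantage that the grid-counting step is entirely elementary.
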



\begin{proof}
The lemma is a simple application of \cite[Lemma~7]{ELR}.
Let $\tilde C_d$ be a constant such that every ball $B\subset\er^d$ of a radius $\rho$ can be covered by $M$ balls of radius $r<\rho$ such that 
\begin{equation}\label{estballbyballs}
Mr^d\leq \tilde C_d\cdot \omega_d\cdot\rho^d.
\end{equation}

Fix a convex body $K\subset\er^d$ and $r\leq\wth K$. 
Using \cite[Lemma~7]{ELR} we can cover $K$ by balls $B_1,\dots,B_{M_1}$ of diameter $\delta:=\sqrt{d}\cdot\wth K$ with
\begin{equation}\label{estKbyballs}
M_1\cdot(\wth K)^d\leq 2^d\sqrt{d}\cdot d!\cdot\vol_d(K). 
\end{equation}
Using \eqref{estballbyballs} we can cover every ball $B_i$ by balls $B^i_1,\dots,B^i_{M_2}$ of diameter $r$ such that
\begin{equation}\label{ballbyballs2}
M_2\cdot r^d\leq \tilde C_d\cdot \delta^d\cdot \omega_d.
\end{equation}
Multiplying \eqref{estKbyballs} and \eqref{ballbyballs2} we obtain
$$
M_1\cdot M_2\cdot r^d(\wth K)^d\leq 2^d\sqrt{d}\cdot d!\tilde C_d (\sqrt{d})^d (\wth K)^d \omega_d\cdot\vol_d(K).
$$
This means that $K$ can be covered by $M:=M_1\cdot M_2$ balls of the form $B^i_j$, $i=1,\dots,M_1$, $j=1,\dots,M_2$, all of diameter $r$, with
$$
Mr^d\leq C_d\cdot\vol_d(K),
$$
where $C_d:=2^dd!\tilde C_d (\sqrt{d})^{d+1} V_d$.
\end{proof}

\begin{lem}\label{diameter'}
Let $K\subset\er^d$ be a convex body. Then for $\tilde K:=K+B(0,1)$, $0<t<1$, and $\nu\in S^{d-1}$, the spherical diameter of the set 
$$
N_{\nu,t}(\tilde K):= \bigcup_{x\in C(\tilde K,\nu,t)} \nor(\tilde K,x)
$$
is smaller than $2\sqrt{3t}.$
\end{lem}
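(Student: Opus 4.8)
The plan is to estimate the spherical (angular) distance between any two outer normals $\nu_1 \in \nor(\tilde K, x_1)$ and $\nu_2 \in \nor(\tilde K, x_2)$ with $x_1, x_2 \in C(\tilde K, \nu, t)$. The key structural fact I would exploit is that $\tilde K = K + B(0,1)$ has a ``rolling ball'' property from outside: at each boundary point $x$ of $\tilde K$ and each outer normal $u \in \nor(\tilde K, x)$, we have $x - u \in K$, so $\tilde K \supset B(x-u, 1)$, and moreover the whole ball $B(x-u,1)$ touches $\bd \tilde K$ at $x$ with normal $u$. Equivalently, for $u \in \nor(\tilde K, x)$ the point $x$ is the unique nearest point of $\tilde K$ to $x + s u$ for every $s > 0$; this is exactly the statement that $\tilde K$ has reach $\geq 1$, and the unit normal bundle of a set of positive reach is well behaved. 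The cleanest route is probably to avoid invoking reach theory and argue directly.

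First I would bound how far the contact points can be from each other and how close $\nu_i$ must be to $\nu$. Since $x_1, x_2 \in C(\tilde K,\nu,t)$ both satisfy $x_i \cdot \nu \geq h_{\tilde K}(\nu) - t$, while $(x_i - \nu_i) \in K$ gives $(x_i - \nu_i)\cdot \nu \le h_K(\nu) = h_{\tilde K}(\nu) - 1$, so $\nu_i \cdot \nu \geq 1 - t$. This already forces the angle between $\nu_i$ and $\nu$ to be at most $\arccos(1-t)$, and $|\nu_i - \nu|^2 = 2(1 - \nu_i\cdot\nu) \le 2t$. Then by the triangle inequality $|\nu_1 - \nu_2| \le |\nu_1 - \nu| + |\nu - \nu_2| \le 2\sqrt{2t}$. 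This is slightly worse than the claimed bound, so I would instead be more careful: using that $x_1, x_2$ lie in the cap, the segment $[x_1, x_2] \subset \tilde K$ also has $\cdot \nu \geq h_{\tilde K}(\nu) - t$; combined with the supporting-hyperplane inequality $x_2 \cdot \nu_1 \le x_1 \cdot \nu_1$ (since $\nu_1$ is a normal at $x_1$) and symmetrically, one gets $(x_1 - x_2)\cdot(\nu_1 - \nu_2) \geq 0$, and a quantitative version of this monotonicity — combined with the ball $B(x_i - \nu_i, 1) \subset \tilde K$ lying below the hyperplane $\{y : y\cdot\nu = h_{\tilde K}(\nu)\}$ shifted by at most $t$ — pins $\nu_1 \cdot \nu_2$ below. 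The right computation: the point $x_1 - \nu_1 + \nu_2 \in \tilde K$ (it is in $B(x_1-\nu_1,1)$), so $(x_1 - \nu_1 + \nu_2)\cdot \nu \le h_{\tilde K}(\nu)$, i.e. $\nu_2 \cdot \nu \le h_{\tilde K}(\nu) - x_1\cdot\nu + \nu_1\cdot\nu \le t + \nu_1 \cdot \nu$; symmetrically $\nu_1\cdot\nu \le t + \nu_2\cdot\nu$. Feeding this back together with $\nu_i \cdot \nu \ge 1-t$ and a similar ball-membership argument applied with $\nu_1, \nu_2$ in place of $\nu$ (namely $x_2 - \nu_2 + \nu_1 \in \tilde K$ gives $(x_2-\nu_2+\nu_1)\cdot\nu_1 \le h_{\tilde K}(\nu_1) = x_1 \cdot \nu_1$, hence $1 - \nu_1\cdot\nu_2 \le (x_1 - x_2)\cdot\nu_1$) and estimating $(x_1 - x_2)\cdot\nu_1 \le 3t$ using that both contact points sit in a slab of width $t$ around a hyperplane nearly orthogonal to $\nu_1$. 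This yields $|\nu_1 - \nu_2|^2 = 2(1 - \nu_1\cdot\nu_2) \le 6t$, whence the spherical diameter is at most $|\nu_1 - \nu_2| \le \sqrt{6t} < 2\sqrt{3t}$ — but I should double-check whether "spherical diameter" here means chordal or geodesic distance; for $t$ small the two differ by a factor $1 + O(t)$, and $\sqrt{6t}$ versus $2\sqrt{3t} = \sqrt{12t}$ leaves ample room.

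The main obstacle will be the estimate $(x_1 - x_2)\cdot \nu_1 \le 3t$ (or a comparable bound with constant giving room under $2\sqrt{3t}$): I need to control the displacement of the two contact points in the direction $\nu_1$, not merely in the direction $\nu$. The tool is again ball membership: $x_2 \in \tilde K$ and $x_1 - \nu_1 \in K \subset \tilde K$, so $x_2 \cdot \nu_1 \le h_{\tilde K}(\nu_1) = x_1 \cdot \nu_1$ gives one side for free, and for the reverse direction I write $x_1 - x_2 = (x_1 - p) + (p - x_2)$ where $p$ is the foot of $x_1$ on the hyperplane $H = \{y\cdot\nu = h_{\tilde K}(\nu) - t\}$ bounding the cap, estimate $|x_1 - p|, |x_2 - p|$ using that the cap has width $t$ and the cross-section diameter is controlled because $B(x_i - \nu_i, 1)\subset\tilde K$ forces the cap to be "thin", then split $\nu_1 = (\nu_1\cdot\nu)\nu + \nu_1^\perp$ with $|\nu_1^\perp| = \sqrt{1 - (\nu_1\cdot\nu)^2} \le \sqrt{2t}$. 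The term along $\nu$ contributes $\le t$; the perpendicular term contributes $\le \sqrt{2t}$ times the cap cross-section diameter. Here the genuinely delicate point is bounding that cross-section diameter by something like $C\sqrt{t}$, which is where the positive-reach / external rolling ball of $\tilde K$ is essential (a general convex body's cap of width $t$ can have cross-section of order $\sqrt{t \cdot \diam}$, not $\sqrt t$, but the unit external ball caps the curvature and gives the clean $\sqrt t$ scaling). Once that geometric lemma about caps of $\tilde K = K + B(0,1)$ is in hand, assembling the inequalities above is routine bookkeeping, and I would present it as: (i) external unit ball $\Rightarrow$ cap $C(\tilde K,\nu,t)$ has Euclidean diameter $\le C\sqrt t$; (ii) $\nor$ at cap points makes angle $O(\sqrt t)$ with $\nu$; (iii) triangle inequality and the refined monotonicity estimate give $|\nu_1 - \nu_2| < 2\sqrt{3t}$.
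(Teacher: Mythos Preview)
Your very first computation is already the entire content of the paper's proof. You correctly observe that $x_i-\nu_i\in K$ and $x_i\cdot\nu\ge h_{\tilde K}(\nu)-t$ force $\nu_i\cdot\nu\ge 1-t$. The paper stops right there: interpreting ``spherical diameter'' as geodesic distance on $S^{d-1}$, one has $\cos\theta_i=\nu_i\cdot\nu\ge 1-t$, and the elementary inequality $\cos\sqrt{3t}<1-t$ for $0<t<1$ gives $\theta_i<\sqrt{3t}$; the geodesic triangle inequality then yields diameter $<2\sqrt{3t}$. Your remark that $2\sqrt{2t}$ is ``slightly worse'' is a slip --- as a chordal bound it is already \emph{better} than $2\sqrt{3t}$; the only issue is that the chordal triangle inequality does not convert cleanly to the geodesic bound for $t$ near $1$, and the one-line fix is to run the triangle inequality geodesically from the start.

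Everything after that point is an unnecessary detour, and moreover your step (i) --- that the cap $C(\tilde K,\nu,t)$ has Euclidean diameter $\le C\sqrt t$ --- is false. Take $K$ to be a large $(d-1)$-ball of radius $R$ in a hyperplane; then $\tilde K=K+B(0,1)$ has a flat face of radius $R$ orthogonal to $\nu$, and $C(\tilde K,\nu,t)$ contains that entire face, so its diameter is $\ge 2R$ independently of $t$. The external rolling ball controls the \emph{normal} variation over the cap (which is exactly what you already proved), but it does not control the tangential extent of the cap. So the elaborate estimate of $(x_1-x_2)\cdot\nu_1$ via cap cross-sections cannot be carried out with a uniform constant; fortunately it is not needed.
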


\begin{proof} Let $\nu \in \nor(\tilde K, x_0)$, and $n\in\nor(\tilde K,x), x\in C(\tilde K,\nu,t)$. Then
$$
x_0\cdot \nu - t \le x\cdot \nu \le x_0\cdot \nu.
$$
Furthermore $x-n \in K$, and therefore $x-n + \nu \in \tilde K$. In particular 
$$
 x\cdot \nu + (1  -n\cdot \nu) =(x  -n+\nu)\cdot \nu \le x_0\cdot \nu,
$$
so
\begin{equation}\label{eq:n dot nu}
\cos \theta =n\cdot \nu \ge 1-t
\end{equation}
where $\theta$ denotes the spherical distance between $n,\nu$. Since
$\cos \sqrt{3t} < 1-t$ for $0<t<1$, we find that $\theta < \sqrt{3t}$, and the conclusion follows.
\end{proof}

We are now ready to prove Lemma~\ref{lem:sharp elr}. The proof is an enhancement of a part of the proof of Theorem~1 from \cite{ELR}.



\begin{proof}[Proof of Lemma~\ref{lem:sharp elr} ]

First note that the set $\Sigma_{A,B}\subset \Sigma_{\tilde A,\tilde B}$ with $\tilde A:=A+B(0,1)$ and $\tilde B:=B+B(0,1)$ 
(in fact, equality holds, but we will not need it in our proof).
Indeed, if we choose $(x-y,p)\in \Sigma_{A,B}$ with $p\in \nor(A,x)\cap\nor(B,y)$, then 
$$
p\in(\nor(\tilde A,x+p)\cap\nor(\tilde B,y+p)
$$ 
and, of course, $(x+p)-(y+p)=x-y$.
Define $K:=\tilde A+\tilde B.$

By \cite[Lemma~5]{ELR}, there exists  $\eps_0>0$ such that for every $0<\eps<\eps_0$ 
there are $n_1,\dots,n_M\in S^{d-1}$, $t_1,\ldots,t_M>0$ and a constant $D'=D'(d,A,B)$ such that
\begin{eqnarray}\label{inequalities}
\partial K\subset\bigcup_{i=1}^{M} C(K,n_i,t_i),\quad\quad\sum\limits_{i=1}^{M} \vol_d(C(K,n_i,t_i))\leq D'\eps,
\end{eqnarray}
and
\begin{equation}\label{width}
\wth C(K,n_i,t_i)\in[2\eps,36d\eps]
\end{equation}
for every $i$. We shall assume that $\eps_0<\tfrac{1}{72d}$, ensuring that these widths are all $\leq \frac 1 2$.
Since 
\begin{equation}\label{KplusL}
K=(A+B)+B(0,2)
\end{equation}
and by the fact that $\wth C(K,n_i,t_i)\leq\tfrac 12$ for every $i$, one can see that $\wth C(K,n_i,t_i)=t_i$ for every $i$. 
Hence \eqref{width} is equivalent to
\begin{equation}\label{width2}
t_1,\dots,t_M\in[2\eps,36d\eps].
\end{equation}

Select points $x_i\in C(A,n_i,t_i)$, $y_i\in C(B,n_i,t_i)$, and denote $z_i:=x_i-y_i$. 
We  show that
\begin{equation} \label{E_crucial}
\Sigma_{A,B}\subset\bigcup_{i=1}^M\left[ (z_i+\Delta C(K,n_i,2t_i))\times(N_{n_i,t_i}(\tilde{A})\cap N_{n_i,t_i}(\tilde{B}))\right].
\end{equation}
Let $(u,p)\in \Sigma_{A,B}\subset \Sigma_{\tilde{A},\tilde{B}}$, i.e., $u=x-y$ with $x\in \tilde{A}$, $y\in \tilde{B}$ and $p\in\nor(\tilde{A},x)\cap\nor(\tilde{B},y)$. 
Then there exists $i\leq M$ such that
\begin{equation}  \label{E_cap}
x+y\in C(K,n_i,t_i).
\end{equation}
We claim that
\begin{equation}  \label{E_caps}
x\in C(\tilde{A},n_i,t_i),\quad y\in C(\tilde{B},n_i,t_i).
\end{equation}
If not, then we may assume for definiteness that $x\cdot n_i<h_{\tilde{A}}(n_i)-t_i$, in which case additivity of support functions gives
$$(x+y)\cdot n_i<h_{\tilde{A}}(n_i)-t_i+h_{\tilde{B}}(n_i)=h_K(n_i)-t_i,$$
 contradicting \eqref{E_cap}.
Since \eqref{E_caps} implies that $p\in N_{n_i,t_i}(\tilde{A})\cap N_{n_i,t_i}(\tilde{B})$,
in order to prove \eqref{E_crucial} it thus remains only to show that $u-z_i \in \Delta C(K,n_i,2t_i)$.
To this end we note that  \eqref{E_caps} implies that
$$x-y\in C(\tilde{A},n_i,t_i)-C(\tilde{B},n_i,t_i)$$
and so
\begin{eqnarray*}
u-z_i=(x-y)-(x_i-y_i)&\in&\Delta(C(\tilde{A},n_i,t_i)-C(\tilde{B},n_i,t_i))\\
&=&\Delta(C(\tilde{A},n_i,t_i)+C(\tilde{B},n_i,t_i))\\
&=&\Delta C(K,n_i,2t_i),
\end{eqnarray*}
as claimed. Here we have used that
$$\Delta(U-V)=\Delta(U+V),\quad U,V\subset\rd,$$
and
$$C(K_1,n,t_1)+C(K_2,n,t_2)=C(K_1+K_2,n,t_1+t_2),$$
$K_1,K_2$ convex bodies, $n\in S^{d-1}$, $t_1,t_2>0$.

Given $1\leq i\leq M$, put $H_i:=\{x:\, x\cdot n_i=h_K(n_i)-2t_i\}$ for the hyperplane containing the base $P_i$ of the cap $C(K,n_i,2t_i)$. In fact this base may be described either as the intersection $H_i \cap K$, or alternatively as the projection of the cap onto $H_i$, i.e.
$$
\Pi_{H_{i}}(C(K,n_i,2t_i))=K \cap H_i =C(K,n_i,2t_i)\cap H_i =:P_i.
$$
If not, then there exists an interior point $x\in C(K,n_i,2t_i)$ such that the segment $[x,\Pi_{H_i}(x)]$ intersects the boundary of $K$ at some point $y\in\partial K$, and there  exists a unit outer normal vector $v\in\nor(K,y)$ with $v\cdot n_i\leq 0$. But since  by \eqref{KplusL} the unit ball is a Minkowski summand of $K$, the relation \eqref{eq:n dot nu} implies that $n_i\cdot v\geq 1-2t_i>0$, which is a contradiction.

By linearity, the same is true of the corresponding difference sets, i.e.
$$
\Pi_{\bar H_i}(\Delta C(K,n_i,2t_i))= \Delta P^i = \Delta C(K,n_i,2t_i)\cap \bar H_i,
$$
where $\bar H_i:= \{x:\, x\cdot n_i= 0\}$.
Clearly $\Delta C(K,n_i,2t_i)$ contains the union of two disjoint antipodal cones with common base $\Delta P^i$ and heights $2t_i$,  and therefore 
$$
\vol_d(\Delta C(K,n_i,2t_i)) \ge \frac {4t_i} d \vol_{d-1} \Delta P_i \ge \frac {8 \eps} d \vol_{d-1}( \Delta P_i ).
$$
Using  \eqref{inequalities} we conclude that 
\begin{equation}\label{eq:a}
\sum\limits_{i=1}^{M} \vol_{d-1}(\Delta P^i)\leq D:= \frac{D' d} 8.
\end{equation}

Using \eqref{KplusL} it is easy to see that $P_i$, and hence $\Delta P^i$ also, includes a ball of radius $\sqrt{t_i}$. Thus Lemma~\ref{cover} implies that $\Delta P^i$ may be covered by
$Q_i$ balls of diameter $\sqrt{t_i} \ge \sqrt{2\eps}$ such that 
\begin{equation}\label{eq:b}
Q_i 2^{\frac{d-1}2} \eps^{\frac{d-1}2}\le Q_i{t_i}^{\frac{d-1}2}\leq C_{d-1}\cdot\vol_{d-1}(\Delta P^i).
\end{equation}
Since
$
\Delta C(K,n_i,2t_i)\subset \Delta P^i+[-2t_i,2t_i]n_i
$
for every $i$, we conclude that every set $\Delta C(K,n_i,2t_i)$ can be covered by $Q_i$ balls of diameter $5\sqrt{t_i}$.
Denote these balls by $B_1^i,\dots,B^i_{Q_i}$.

By Lemma~\ref{diameter'} we have a fortiori
$$
\diam(N_{n_i,t_i}(\tilde A)\cap N_{n_i,t_i}(\tilde B))\leq 2\sqrt{3t_i}
$$
for every $i$. 
Therefore by \eqref{E_crucial} the set $\Sigma_{\tilde A,\tilde B}$ can be covered by $\sum_i^{M} Q_i$ sets of the form 
$$
(z_i+B_j^i)\times (N_{n_i,t_i}(\tilde A)\cap N_{n_i,t_i}(\tilde B))\subset \er^d\times S^{d-1},\quad i=1,\dots,M,\quad j=1,\dots,Q_i,
$$
each of which has diameter at most $7\sqrt{t_i}\leq 42\sqrt{d}\;\sqrt{\eps}$. Since by \eqref{eq:a} and \eqref{eq:b}
$$
({\sqrt\eps})^{{d-1}}\sum_i^{M} Q_i\leq  2^{-\frac{d-1} 2} C_{d-1} D,
$$
where the right hand side is independent of $\eps$, 
Lemma~\ref{lem:box}~(1) completes the proof.
\end{proof}


\subsection{The Minkowski dimension of $\noreps f$}
\begin{lem}  \label{duality}
If $f \in \DC(\Rd)$ then 
$$
\graph\partial f\subset T^*\R^d\cong \R^d\times\R^d
$$
has locally finite $d$-content.
\end{lem}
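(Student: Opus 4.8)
The plan is to pass to the convex-conjugate picture, exactly as in Pavlica--Zaj\'i\v cek, and deduce the statement from the already-proven Lemma~\ref{lem:sharp elr}. Working locally, write $f = g - h$ on a bounded convex neighborhood, where $g,h$ are convex; by adding a large multiple of $|x|^2$ to both we may assume $g,h$ are uniformly convex with controlled growth, so that their Legendre transforms $g^*,h^*$ are finite and Lipschitz on a neighborhood, and the epigraphs $A := \epi g^*$ and $B := \epi h^*$ (suitably truncated to become compact convex bodies in $\R^{d+1}$) are well-behaved. The key duality observation is that $\xi \in \partial f(x)$ --- equivalently $x \in \partial g(\cdot) - \partial h(\cdot)$ realized via a common subgradient splitting $\xi = \xi_g - \xi_h$ with $\xi_g \in \partial g(x)$, $\xi_h \in \partial h(x)$ --- corresponds, under conjugation, to a hyperplane $P \subset \R^{d+1}$ that is simultaneously a supporting hyperplane of $A$ at the point $(\xi_g, g^*(\xi_g))$ and of $B$ at $(\xi_h, h^*(\xi_h))$, with the outer normal direction of $P$ determined by $x$. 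Concretely, the pair $(\xi, x)$ is recovered from the pair (common normal direction $w$, difference of contact points projected to the first $d$ coordinates), so that $\graph \partial f$ is the image of $\Sigma_{A,B}$ (or a bounded piece of it) under a locally Lipschitz map.

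The steps, in order, would be: (i) reduce to a local statement on a bounded set, and fix the decomposition $f = g-h$ with $g,h$ uniformly convex after a harmless smooth modification; (ii) define the compact convex bodies $A, B \subset \R^{d+1}$ as truncated epigraphs of $g^*, h^*$, checking that the truncation does not affect the relevant supporting hyperplanes (those with outer normal having strictly negative last coordinate, normalized so the last coordinate is $-1$, i.e. non-vertical supporting planes); (iii) write down explicitly the correspondence between $(x,\xi) \in \graph \partial f$ over the chosen bounded region and elements $(\text{point difference}, \text{normal}) \in \Sigma_{A,B}$, and verify it is realized by a locally Lipschitz map $\Phi$ defined on (a bounded neighborhood of) $\Sigma_{A,B}$ with $\Phi(\Sigma_{A,B}) \supset \graph\partial f \cap (\text{bounded region})$; (iv) invoke Lemma~\ref{lem:sharp elr} to conclude $\Sigma_{A,B}$ has finite $(d-1)$-content, use the invariance of finite $m$-content under locally Lipschitz images to transfer this to... wait --- the target dimension is $d$, not $d-1$. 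The discrepancy is the point: $\Sigma_{A,B}$ lives in $\R^d \times S^{d-1}$ and has finite $(d-1)$-content, but $\graph\partial f \subset \R^d\times\R^d$; the extra dimension comes from the one-parameter family of contact points along each segment in $\partial A$ (resp. the segment structure), i.e. the map $\Phi$ is not finite-to-one but has fibers that are (at most) one-dimensional segments. So step (iii)--(iv) must be: cover $\graph \partial f$ by the union over the covering sets of $\Sigma_{A,B}$ of products (ball of radius $\sqrt{t_i}$) $\times$ (set of diameter $2\sqrt{3t_i}$ in the normal variable) $\times$ (interval of length comparable to the cap width along the segment direction), so each piece has diameter $\lesssim \sqrt{\eps}$ and the number of pieces times $\eps^{d/2}$ stays bounded by $\eps^{1/2}\cdot(\text{bound from Lemma~\ref{lem:sharp elr}})$, which tends to $0$; then Lemma~\ref{lem:box}(1) finishes the local estimate, and a standard covering/partition-of-unity argument upgrades it to local finiteness globally.

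The main obstacle I expect is step (iii): setting up the conjugate-body correspondence cleanly enough that (a) the compactifying truncation of the epigraphs is genuinely harmless --- one needs that every $\xi \in \partial f(x)$ for $x$ in the bounded region arises from a non-vertical supporting hyperplane lying in the ``good'' part of $\partial A \cap \partial B$, which is where uniform convexity of $g,h$ (hence Lipschitz-ness and boundedness of $g^*, h^*$ near the relevant subgradients) is used; and (b) one keeps careful track of the \emph{extra segment direction} so that the covering count produces the exponent $d$ and not $d-1$ or $d+1$. This is essentially the content of Proposition~7.1 of \cite{PZ} together with the enhancement already built into Lemma~\ref{lem:sharp elr}, so the estimates are available; the work is in organizing the bookkeeping. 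A secondary, more routine, obstacle is the reduction in step (i)--(ii): verifying that $\DC$ decompositions can be chosen with uniform convexity and that a locally Lipschitz change of the decomposition (or of coordinates) preserves finite $m$-content, which follows from Lemma~\ref{lem:box}(1).
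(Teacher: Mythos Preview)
Your overall strategy---Pavlica--Zaj\'i\v cek duality via the epigraphs of the conjugates $g^*,h^*$ and an appeal to Lemma~\ref{lem:sharp elr}---is exactly the paper's approach. But your step (iv) contains a dimension miscount that sends you on an unnecessary and incorrect detour.

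The slip is this: you correctly take $A=\epi g^*$ and $B=\epi h^*$ as convex bodies in $\R^{d+1}$, but then you apply Lemma~\ref{lem:sharp elr} as if they lived in $\R^d$, concluding that $\Sigma_{A,B}$ has finite $(d-1)$-content. In fact, with $A,B\subset\R^{d+1}$, Lemma~\ref{lem:sharp elr} gives that $\Sigma_{A,B}\subset\R^{d+1}\times S^d$ has finite $d$-content---which is precisely the exponent you need. There is no ``extra segment direction'' to account for; the extra dimension is already present because the epigraph construction raises the ambient dimension by one. The correspondence $(x,\xi)\leftrightarrow(\text{difference of contact points},\text{common normal})$ is then realized by a locally Lipschitz map from (a bounded piece of) $\Sigma_{A,B}$ \emph{onto} a superset of $\graph\partial f$ over the given compact, with no one-dimensional fibers; concretely, the paper uses $(a,t,b,s)\mapsto(-b/s,a)$, Lipschitz where $-b/s$ stays in the compact $K$. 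So the argument you sketch in the second half of step (iv)---covering by products with an extra interval factor and counting pieces---is not needed and, as written, would not produce the right exponent.

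Once this is corrected, your outline collapses to the paper's proof. Your technical precautions in (i)--(ii) (uniform convexity via adding a multiple of $|x|^2$) are a reasonable alternative to the paper's device of modifying $g,h$ outside a neighborhood to make $g^*,h^*$ globally finite; either works.
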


\begin{proof} This follows by an adaptation of the duality argument by Pavlica and Zaj\'i\v cek in \cite[Proposition 4.2]{PZ}. Let $f = g-h$ for convex functions $g,h$. 
We shall show that
$$
E_K(g,h):=\{(x,u-v):\, x\in K, u\in\partial g(x), v\in\partial h(x)\}
$$
has finite $d$-content for every $K\csubset\er^d$, which will imply the assertion by Lemma~\ref{lem:clarke of max}.

Suppose that both $g$ and $h$ are Lipschitz with a constant $L$ on an open neighbourhood $U$ of $K$ 
and let $g^*(x) := \sup_\xi (x\cdot \xi -g(\xi)) ,h^*(x)$ be the respective conjugate functions to $g,h$.
We may 
assume that both $g^*$ and $h^*$ are finite everywhere; this is equivalent to the assumption that
$
\bigcup_x\partial g(x) = \bigcup_x\partial h(x) = \Rd
$
(cf. \cite[Lemma 2.4]{PZ}), which in turn may be assured by altering $g,h$ outside of $U$ if necessary.
Thus  (\cite[Proposition~11.3]{RW04})
$$
\left(u\in\partial g(x)\;\;\&\;\; x\in K\right)\Longrightarrow \left(x\in\partial g^*(u)\;\;\&\;\; u\in B(0,L)\right)
$$
and similarly for $h$, so that
$$
E_K(g,h)\subset\{(x,u-v):\, u,v\in B(0,L),\, x\in\partial g^*(u)\cap\partial h^*(v)\}=:\tilde E_L(g,h).
$$

Let $A\subset \epi g^*, B \subset \epi h^*$ be compact convex subsets of $\R^{d+1}$ whose boundaries include the graphs of $\restrict{ g^*}{ B(0,L)}, \restrict{ h^*}{ B(0,L)}$ respectively.
Since
$$
\nor(\epi g^*, (u,g^*(u))) = \left\{\frac{(x,-1)}{\sqrt{1+|x|^2}}: x \in \partial g^*(u)\right\},
$$
and analogously for $h^*$, we find that the set $\Sigma_{A,B}$ from Lemma~\ref{lem:sharp elr} includes the set
$$\tilde\Sigma_{A,B}=\left\{\left(u-v,g^*(u)-h^*(v),\frac{(x,-1)}{\sqrt{1+|x|^2}}\right):\, u,v\in B(0,L),\, x\in\partial g^*(u)\cap\partial h^*(v)\right\},$$
which therefore
 has finite $d$-content.
Since $\tilde E_L(g,h)$ is a subset of the image of $\tilde\Sigma_{A,B}$ under the mapping
$$(a,t,b,s)\mapsto\left(-\frac bs,a\right),$$
which is Lipschitz on the set where $|b|^2 + s^2 =1, -\frac b s \in K$, this completes the proof.
\end{proof}


\begin{proof}[Proof of Theorem~\ref{WDCbundle}]
Let $f$ be a DC aura in $\R^d$. 
We claim that the graph of $ \partial f$ includes a subset that is locally biLipschitz equivalent to the Cartesian product of $\noreps f$ with an interval. With Lemma \ref{duality} this implies the desired conclusion. 

By the definition of ${\noreps}f$, if $\xi=(x,u)\in{\noreps}f$ then $tu\in\partial f(x)$ for some $t\ge \eps$. 
By Lemma \ref{lem:local extremum}, $0\in\partial f(x)$ whenever $f(x)=0$, so by the convexity of $\partial f(x)$ it follows that the whole segment $[0,\eps u]\subset \partial f(x)$. Thus the map
$$((x,u),t)\mapsto (x,tu)$$
 yields a biLipschitz embedding of $\noreps f \times [0,\eps]$ into $\graph\partial f$. Since the latter set has locally finite $d$-content the conclusion of Theorem \ref{WDCbundle} follows.
\end{proof}

\begin{proof}[Proof of Theorem~A] Clearly it is sufficient to prove the statement in the case where the convex set $K$ is compact.
Under this assumption, let $H,H'$ be distinct parallel hyperplanes that intersect $K$. Let $T_K^{H,H'}$ be the subset of $T_K$ from Theorem~A induced by boundary segments that intersect both $H$ and $H'$. Since clearly $T_K$ is the union of a countable family of subsets of such type, it will be sufficient to show that $T_K^{H,H'}$ has finite $(d-2)$-content for a fixed pair $H,H'$.

Denote $A=K\cap H$ and $B=(K\cap H')-z$, where $z\perp H$ is the vector with $H'=H+z$; $A,B$ are closed convex subsets of $H$. Observe that if $[x,\bar{y}]$ is a segment from the boundary of $K$ that intersects both $H$ and $H'$ and with direction $v$ and lying in a supporting hyperplane of outward normal direction $w$, then $(x-\bar{y}+z,\Pi_Hw/|\Pi_Hw|)$ belongs to one of the sets $\Sigma_{A,B},\Sigma_{B,A}\subset H\times S^{d-2}$, where $S^{d-2}\subset H$ is the unit sphere of $H$. Now  Lemma~\ref{lem:sharp elr} yields that $\Sigma_{A,B}$ has finite $(d-2)$-content. Inverting this mapping, we obtain that
$$(x-y,u)\mapsto\left(\frac{x-y+z}{|x-y+z|},\frac{|z|^2u-(u\cdot(x-y))z}{||z|^2u-(u\cdot(x-y))z|}\right)$$
maps $\Sigma_{A,B}\cup\Sigma_{B,A}$ onto $T_K^{H,H'}$. Since both denominators in the formula above are bounded from below (by $|z|,|z|^2$, respectively), the mapping is Lipschitz. It follows that $T_K^{H,H'}$ has finite $(d-2)$-content as well and the proof is finished.
\end{proof}

\section{Proof of Theorem B} \label{sect:kinematic}
Throughout this section we take $(M,G)$ to be a Riemannian isotropic space, i.e. $M$ to be a Riemannian manifold and $G$ a group of isometries of $M$ that acts transitively on the tangent sphere bundle $SM$. We choose base points $\bar o \in SM, o \in M$ with $\pi(\bar o) = o$, and denote by $G_{\bar o}, G_o\subset G$ the respective stabilizers of these points. Thus we may identify $SM \simeq G/G_{\bar o}, \ M \simeq G/G_o$. It is clear that the space $\Omega^*(SM)^G$ of $G$-invariant differential forms on $SM$ is isomorphic to the space $(\bigwedge^*T_{\bar o} SM)^{G_{\bar o}}$ of $G_{\bar o}$-invariant elements of the exterior algebra of the tangent space to $SM$ at $\bar o$. In particular, this space has finite dimension.

 For  $A \in \WDC(M)$ we let $N(A)$ denote the {\bf normal cycle} of $A$, i.e. the image of $N^*(A)$ under the diffeomorphism $S^*M \to SM$ induced by the Riemannian metric, and for a \DC~ aura $f$ and $\eps >0$ we define $\noreps f \subset SM$ similarly. We also continue to use the notation $\D(f)$ for the ``gradient cycle", the image in $TM$ of the differential cycle of $f\in \MA(M)$.

\subsection{Generic intersections}
The first part of Theorem B is established in the following.

\begin{prop}\label{prop:max dc}  Let $(M,G)$ be a Riemannian isotropic space and let $f,g$ be \DC~ auras for the compact sets $A,B \subset M$, respectively. Then there exists   $C \csubset G$ of measure zero  such that $h_\gamma := f + g \circ \gamma\inv$ is an aura for $A\cap \gamma B$ whenever $\gamma \notin C$. 

More precisely, every $\gamma_0 \in G\setminus C$ admits a neighborhood $W \subset G\setminus C$ with the following property: there exist open sets $ U \supset A, V \supset B$, and a constant $ \eps_0 >0$, such that 
$$
\ell (\xi + \gamma \eta) > \eps_0
$$
whenever (abbreviating $x:= \pi(\xi), y:= \pi(\eta)$)
\begin{align*}
\gamma &\in W,\\
\gamma y = x \in & (U\cap \gamma V) \setminus (A \cap \gamma B), \\
 \xi \in \partial f(x),& \quad \eta \in \partial g(y).
\end{align*}
\end{prop}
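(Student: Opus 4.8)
The plan is to reduce the statement to a local compactness argument on the sphere bundle, exploiting the characterizations of weak regularity in Definition~\ref{def:weakly reg} together with the Clarke-differential calculus. First I would invoke Lemma~\ref{lem:clarke of max} together with the chain rule for the Clarke differential to observe that for any $\gamma\in G$, $\partial h_\gamma(x) \subset \partial f(x) + \partial(g\circ\gamma\inv)(x) \subset \partial f(x) + (\gamma\inv)^*\partial g(\gamma\inv x)$, so that every element of $\partial h_\gamma(x)$ has the form $\xi + \gamma\eta$ with $\xi\in\partial f(x)$, $\eta\in\partial g(y)$, $y=\gamma\inv x$. Also $h_\gamma\ge 0$ and $h_\gamma\inv(0) = f\inv(0)\cap (g\circ\gamma\inv)\inv(0) = A\cap\gamma B$, so the only thing to check is that $0$ is a weakly regular value of $h_\gamma$ for a.e.\ $\gamma$; and in fact the ``more precise'' displayed estimate, once established on a neighborhood $W$ of $\gamma_0$, immediately gives weak regularity via characterization (2) of Definition~\ref{def:weakly reg} (with $K$ any compact set containing $\ov{U\cap\gamma V}$ for $\gamma\in W$), so it suffices to prove the displayed quantitative claim for all $\gamma_0$ outside a null set $C$.

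Next I would set up the relevant ``obstruction set.'' Fix a length function $\ell$ on $T^*M$. Weak regularity of the individual auras $f,g$ (characterization (2), applied on compact neighborhoods of $A$ and $B$) furnishes $\eps_1>0$, open $U_1\supset A$, $V_1\supset B$ with: if $x\in U_1\setminus A$, $\xi\in\partial f(x)$ then $\ell(\xi)\ge\eps_1$, and similarly for $g$ on $V_1$. Shrinking, we may assume $\ov{U_1},\ov{V_1}$ compact. The failure set of the desired conclusion near a point $x$ is controlled by the set of $(\xi,\eta,\gamma)$ with $x=\pi(\xi)=\gamma\pi(\eta)$, $\xi\in\partial f(x)$, $\eta\in\partial g(y)$, $\ell(\xi)\ge\eps_1$ or $\ell(\eta)\ge\eps_1$ (one of these must hold once $x\notin A\cap\gamma B$ and $x$ lies in $U_1\cap\gamma V_1$), and $\xi+\gamma\eta$ small. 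I would introduce the closed set
$$
Z := \{(\bar\xi,\bar\eta,\gamma)\in S^*M\times S^*M\times G : \gamma\pi(\bar\eta)=\pi(\bar\xi),\ \bar\xi\in\noreps[\eps_1] f,\ \bar\eta\in\noreps[\eps_1] g,\ \text{(a compatibility/cancellation condition)}\}
$$
—more precisely, I would work directly with $\graph\partial f$ and $\graph\partial g$ restricted to $\ell\ge\eps_1$, which are closed, and consider the set of triples where the normalized covectors are antipodal after transport by $\gamma$, i.e.\ $\proj(\xi) = -\gamma\proj(\eta)$. By Lemma~\ref{lem:support df} and Proposition~\ref{prop:support of n*} these support sets have finite $(d-1)$-content (Theorem~\ref{WDCbundle}), which is the crucial input.

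The heart of the argument is then a dimension/transversality count: the ``bad'' set of $\gamma$ is the image, under projection to the $G$-factor, of a subset of $\graph\partial f \times \graph\partial g \times G$ cut out by (i) the base-point matching $\gamma\pi(\eta)=\pi(\xi)$ and (ii) the cancellation $\xi = -\gamma\eta$. Constraint (i) is $d$ equations and constraint (ii), given (i), pins down $\gamma$ on the fiber $G_x$ over $x$ up to the stabilizer directions, cutting the fiber dimension by $d-1$ (the dimension of the sphere $\proj(\eta)$ ranges over). Heuristically the bad set has dimension $\le \dim\graph\partial f + \dim\graph\partial g + \dim G - d - (d-1)$. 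Since $\graph\partial f\cap\{\ell\ge\eps_1\}$ and $\graph\partial g\cap\{\ell\ge\eps_1\}$ are contained in $\R^d\times\R^d$ with the key fact being that, projected appropriately, the normal directions $\noreps[\eps_1] f$, $\noreps[\eps_1] g$ each have finite $(d-1)$-content, one gets that the bad set of $\gamma$ has positive codimension in $G$, hence measure zero. I would make this rigorous not by a slicing theorem but by a direct covering estimate in the spirit of Lemma~\ref{lem:box}: cover $\noreps[\eps_1] f$ and $\noreps[\eps_1] g$ by $\asymp\delta^{-(d-1)}$ balls of radius $\delta$; each pair of balls, together with the matching condition on base points and the antipodality condition on directions, constrains $\gamma$ to a set of $G$-measure $\lesssim\delta^{\dim G_{\bar o}}\cdot\delta^{d}$ or so (a neighborhood of a submanifold of codimension $\ge d + (d-1) - (d-1) = d$... ), and summing over the $\asymp\delta^{-2(d-1)}$ pairs and letting $\delta\to0$ shows the bad set is null. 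Here one uses that $G$ acts transitively on $SM$ so that the map $\gamma\mapsto(\gamma\pi(\eta),\proj(\gamma\eta))$ is a submersion onto $SM$, giving the needed quantitative non-degeneracy of the constraints.

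The main obstacle, I expect, is the bookkeeping in this last covering estimate: one must verify that the two constraints—base-point matching and direction-antipodality—are genuinely \emph{independent} (jointly transversal) in $G$, so that their combined effect really is codimension $2d-1$ rather than less, and one must do this uniformly over $\gamma$ in a compact neighborhood $W$ of $\gamma_0$ and over the relevant compact pieces of the support sets. Transitivity of $G$ on $SM$ is what makes the direction constraint cost a full $d-1$; the base-point constraint costing $d$ is automatic from transitivity on $M$. Once the null set $C$ is produced, the quantitative lower bound $\ell(\xi+\gamma\eta)>\eps_0$ on a neighborhood $W$ follows by a compactness argument: if it failed there would be sequences $\gamma_k\to\gamma_0\notin C$ (after shrinking $W$), $x_k\to x_0$, $\xi_k\to\xi_0\in\partial f(x_0)$, $\eta_k\to\eta_0\in\partial g(y_0)$ with $\xi_k+\gamma_k\eta_k\to0$ and $x_k\notin A\cap\gamma_k B$; by the weak regularity of $f$ and $g$ one of $\ell(\xi_0),\ell(\eta_0)$ is $\ge\eps_1>0$, so the limiting triple $(\xi_0,\eta_0,\gamma_0)$ is a genuine point of the bad set, forcing $\gamma_0\in C$, a contradiction. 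I would close by noting $W$ can be taken inside $G\setminus C$ and choosing $U\subset U_1$, $V\subset V_1$ shrunk so that $U\cap\gamma V$ stays within the controlled region for all $\gamma\in W$.
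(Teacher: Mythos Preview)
Your proposal is correct and follows essentially the same strategy as the paper: fix $\eps$ and neighborhoods from the weak regularity of $f,g$, invoke Theorem~\ref{WDCbundle} for the finite $(d-1)$-content of $\noreps f,\noreps g$, deduce that the ``antipodal collision'' set $C\subset G$ is null, and finish with a sequential compactness argument to get the uniform lower bound on $W$. The one place where the paper's execution differs---and which dissolves the bookkeeping obstacle you anticipate---is the measure-zero step. Instead of treating base-point matching and direction antipodality as two separate constraints whose joint transversality must be checked by a covering estimate, the paper observes that
\[
F':=\{(\xi,\eta,\gamma)\in SM\times SM\times G:\ \gamma\eta=-\xi\}
\]
is a smooth fiber bundle over $SM\times SM$ with fiber diffeomorphic to $G_{\bar o}$ (immediate from transitivity of $G$ on $SM$). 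The bad set is then just the preimage of the compact product $\noreps f\times\noreps g$ under this bundle projection, hence by Lemma~\ref{lem:box}\eqref{item:mink prod} it has finite $(2d-2+\dim G_{\bar o})$-content; its Lipschitz projection $C$ to $G$ inherits this bound, and since $\dim G=2d-1+\dim G_{\bar o}$ the set $C$ is null. This single observation replaces your covering calculation and renders the transversality bookkeeping automatic. One small point in your final compactness step: you note that one of $\ell(\xi_0),\ell(\eta_0)$ is $\ge\eps_1$, but since $\xi_0=-\gamma_0\eta_0$ and $\gamma_0$ is an isometry, both are; and by letting $U,V$ shrink to $A,B$ along the contradicting sequence one forces $x_0\in\partial A$, $y_0\in\partial B$, so that $(\nu(\xi_0),\nu(\eta_0))\in\noreps f\times\noreps g$ and the contradiction with \eqref{eq:antipodes disjoint} goes through.
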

\begin{proof}

By definition of weak regularity, we may find $\eps >0$ and  neighborhoods $ U' \supset A,  V' \supset B$ such that
$\ell(\xi),\ell(\eta) \ge \eps$ whenever $\xi \in \graph (\partial f) \cap \pi\inv( U'\setminus A ), \eta \in \graph (\partial g) \cap \pi\inv( V' \setminus  B)$. Since $A,B$ are compact, it follows that $\noreps f, \noreps g$ are compact as well. 
 By Theorem \ref{WDCbundle} and Lemma \ref{lem:box} \eqref{item:mink prod}, the product $\noreps(f) \times s \noreps(g)$ is compact, with finite $(2d-2)$-content, where $d = \dim M$.

Put $s:SM \to SM$ for the fiberwise antipodal map.

 Consider
$$
F':= \{(\xi,\eta,\gamma) \in SM \times SM \times G: \gamma \eta = - \xi\}.
$$
The projection of $F'$ to the first two factors is clearly a fiber bundle with fibers diffeomorphic to
 $G_{\bar o}$. 
 Thus the  preimage of $\noreps f \times \noreps g$ under the projection of $F'$ is compact and has finite $(2d-2+ \dim G_{\bar o})$-content, so the projection $ C$ of this preimage to the third ($G$) factor has the same property. Since $\dim G = \dim SM + \dim G_{\bar o} = 2d-1 +\dim G_{\bar o}$, it follows that $C$ has measure zero in $G$.

Let $\gamma_0\in G\setminus C$. We prove the more detailed statement of the second paragraph. By construction, 
\begin{equation}\label{eq:antipodes disjoint}
\noreps f \cap s (\gamma_0 \noreps g)  = \emptyset.
\end{equation}
If the conclusion is false then  there exist $\xi_0, \eta_0 \in TM$ and sequences
$$
\gamma_i \to \gamma_0,  \quad \xi_i \to \xi_0, \quad \eta_i \to \eta_0
$$
such that, putting $x_i := \pi(\xi_i), y_i := \pi(\eta_i)$:
\begin{align*}
\xi_i &\in \partial f(x_i),\\
\eta_i &\in \partial g(y_i),\\
U' \setminus A \owns x_i& \to x_0 \in A\quad \text{(for definiteness)}, \\
V' \owns y_i &\to y_0 \in B, \\
\gamma_i y_i &= x_i,\\
\xi_i+ \gamma_i \eta_i &\to 0.
\end{align*}
Then $\ell(\xi_i) \ge \eps$, so $\xi_0 \in \partial f(x_0) \cap \ell\inv[\eps, \infty)$. Since  by continuity $\xi_0 + \gamma_0 \eta_0 = 0$, it follows that $\eta_0 \in \partial g(y_0) \cap \ell\inv[\eps, \infty)$, where $ \gamma_0 y_0 = x_0$. This contradicts \eqref {eq:antipodes disjoint}.
%
%
%
%
%
 \end{proof}

{\bf Remark.} As a side point, a simpler version of this last proof yields the following, correcting an error in 
 \cite[Proposition~7.3]{PR} and \cite[\S2.2.3]{fu94}. There it is stated  that $f+g$ is an aura under  a condition  similar to but weaker than \eqref{eq:strong transverse}, with $\noreps f$ replaced by a larger set.
That statement may be true, but we do not know how to prove it.

\begin{cor}\label{cor:transverse intersection} Suppose $A,B \subset M$ are \WDC~ sets, with auras $f,g$ respectively. Suppose that for all sufficiently small $\eps >0$ 
\begin{equation}\label{eq:strong transverse}
\noreps f \cap s (\noreps g) = \emptyset.
\end{equation}
Then $A \cap B$ is a \WDC~ set, with aura $f+g$.
\end{cor}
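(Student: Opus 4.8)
The plan is to deduce Corollary \ref{cor:transverse intersection} directly from Proposition \ref{prop:max dc}, using the fact that the vanishing hypothesis \eqref{eq:strong transverse} localizes (in a suitable sense) the finiteness-of-content argument that produced the exceptional set $C$. The key observation is that in the proof of Proposition \ref{prop:max dc} the only role of the group $G$ was to provide a one-parameter worth of ``generic'' isometries; if we replace $G$ by the trivial group $\{\mathrm{id}\}$, then the exceptional set $C$ degenerates to either the empty set or all of $\{\mathrm{id}\}$, and condition \eqref{eq:strong transverse} is exactly what guarantees the former alternative. Concretely, I would first fix $\eps>0$ small enough that both the weak-regularity estimates of Definition \ref{def:weakly reg} hold for $f$ on some $U'\supset A$ and for $g$ on some $V'\supset B$, and that \eqref{eq:strong transverse} holds for this $\eps$; since $\noreps f$ and $\noreps g$ are compact (as $A,B$ are compact, by the argument in the proof of Proposition \ref{prop:max dc}), and $s$ is continuous, the disjointness $\noreps f\cap s(\noreps g)=\emptyset$ is stable: there is $\delta>0$ with $\ell(\xi+\eta)\ge\delta$ whenever $\xi\in\noreps f$ and $-\eta\in s\noreps f$-neighborhood... more precisely whenever $\xi\in\graph\partial f\cap\pi\inv(U'\setminus A)$ and $\eta\in\graph\partial g\cap\pi\inv(V'\setminus B)$ with $\pi(\xi)=\pi(\eta)$.

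The second step is to run the contradiction argument of the second half of the proof of Proposition \ref{prop:max dc} verbatim, but with $\gamma_0=\mathrm{id}$ and no $\gamma_i$'s: if $f+g$ failed to be an aura for $A\cap B$, then $0$ would fail to be a weakly regular value, so by Definition \ref{def:weakly reg}(1) and Lemma \ref{lem:clarke of max} there would be points $x_i\to x_0\in A\cap B$ with $(f+g)(x_i)>0$ and covectors $\zeta_i\in\partial(f+g)(x_i)\subset\partial f(x_i)+\partial g(x_i)$ with $\zeta_i\to 0$. Writing $\zeta_i=\xi_i+\eta_i$ with $\xi_i\in\partial f(x_i)$, $\eta_i\in\partial g(x_i)$, and passing to subsequences so that $\xi_i\to\xi_0$, $\eta_i\to\eta_0=-\xi_0$, one argues (exactly as in Proposition \ref{prop:max dc}) that $x_0\in A$ forces $\xi_0\in\partial f(x_0)\cap\ell\inv[\eps,\infty)$ whenever $x_i\notin A$ for infinitely many $i$ — using that $(f+g)(x_i)>0$ means at least one of $f(x_i),g(x_i)$ is positive, and handling the two cases by the weak-regularity estimate for $f$ (resp. $g$). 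Then $\xi_0\in\noreps f$ and $\eta_0=-\xi_0\in s\noreps g$ (after checking $g(x_0)=0$, so $0\in\partial g(x_0)$ and the segment trick of the proof of Theorem \ref{WDCbundle} or simply convexity of $\partial g(x_0)$ gives $\xi_0\in s\,\nor_\eps g$ once $\eta_0$ has length $\ge\eps$), contradicting \eqref{eq:strong transverse}. One must be slightly careful that $(f+g)(x_i)>0$ does not by itself give $f(x_i)>0$; the clean way is to split the sequence into the sub-sequence where $f(x_i)>0$ and the one where $g(x_i)>0$ (one of them is infinite) and apply the relevant one-sided weak-regularity bound, noting that $x_i\to x_0\in A\cap B\subset A\cap V'$ lands in both neighborhoods eventually.

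The third and final step is bookkeeping: $f+g$ is nonnegative (sum of nonnegative functions), DC by Corollary \ref{cor:max}, and $(f+g)\inv(0)=f\inv(0)\cap g\inv(0)=A\cap B$, so once $0$ is shown to be a weakly regular value, $f+g$ is by definition a DC aura for $A\cap B$, and in particular $A\cap B\in\WDC(M)$.

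I expect the main obstacle to be the case analysis in the contradiction step — specifically, extracting from the single inequality $(f+g)(x_i)>0=(f+g)(x_0)$ the information that one of the two summands is bounded away below along a subsequence in a way compatible with the weak regularity of that summand. This is genuinely the heart of why \eqref{eq:strong transverse} is the right hypothesis: the antipodal-disjointness is precisely engineered so that the two one-sided limiting covectors $\xi_0$ and $\eta_0=-\xi_0$ cannot both simultaneously have length $\ge\eps$ and belong to the respective $\nor_\eps$-sets. Everything else (compactness of the $\nor_\eps$ sets, Lemma \ref{lem:clarke of max}, DC-ness and nonnegativity of $f+g$) is routine and already in place in the excerpt.
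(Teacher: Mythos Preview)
Your approach is correct and is exactly what the paper intends: the paper does not give a separate proof but simply remarks that the corollary follows from ``a simpler version of this last proof'' (i.e., of Proposition~\ref{prop:max dc}), and your proposal spells out precisely that simpler version, running the contradiction argument with $\gamma_0=\mathrm{id}$ and no moving $\gamma$. Two small points to tidy: compactness of $A,B$ is neither assumed in the statement nor needed (the contradiction is local, so compactness of the $\noreps$ sets is irrelevant---only the disjointness hypothesis matters); and to place $\nu(\eta_0)$ in $\noreps g$ you should note that $x_0\in\bdry B$, which follows because $\eta_0=-\xi_0\neq 0$ rules out $g\equiv 0$ near $x_0$.
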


\subsection{The main diagram}\label{sect:main diagram} Next we recall a classical construction of integral geometry, formalized current-theoretically in \cite{fu90}. Consider the space 
$$
\EE:=\{(\xi,\eta,\zeta,\gamma) \in SM^3 \times G: \pi(\xi) = \pi(\zeta) = \gamma \pi(\eta) \},
$$
to be thought of as the total space of a fiber bundle $\E$ over $SM \times SM$,with fiber over $(\xi,\eta) \in SM \times SM$ given by
$$
{\EE }_{\xi,\eta}:=\{(\zeta,\gamma)\in SM \times G: \pi(\zeta) = \pi(\xi) =\gamma \pi(\eta) \}\simeq S_oM \times G_o.
$$
The group of this bundle reduces to $G_{\bar o}\times G_{\bar o}$, acting on the model fiber $S_oM\times G_o$ by
\begin{equation}\label{eq:action}
(\gamma_0,\gamma_1)\cdot (\zeta, \bar \gamma) = (\gamma_0\zeta, \gamma_0\bar \gamma\gamma_1\inv).
\end{equation}
There is then a double fibration
\begin{equation}\label{eq:main diagram}
\begin{CD} \EE @>(Z, \Gamma)>> SM \times G\\
@V{(\Xi, H)}VV \\
SM \times SM 
\end{CD}
\end{equation}
where $\Xi,H, Z,\Gamma$ are the restricted projections to the respective factors. The left action of $G\times G$ on $\EE$, given by
$$
(\gamma_0,\gamma_1)\cdot (\xi,\eta,\zeta,\gamma): = (\gamma_0 \xi, \gamma_1\eta, \gamma_0\zeta, \gamma_0\gamma \gamma_1\inv),
$$
intertwines the obvious  left actions of $ G\times G$ on $SM \times SM$ and $SM \times G$ respectively.

\subsubsection{The connecting current}
Each fiber $\EE_{\xi,\eta}$ includes the canonical subset
$$
{\bf C}_{\xi,\eta}:=\{(\zeta,\gamma) \in \EE_{\xi,\eta}: \zeta \in \overline{\xi,\gamma  \eta} \},
$$
where $\overline{\xi,\gamma  \eta}\subset S_{\pi(\xi)}M$ denotes the set of all points lying on a minimizing geodesic connecting ${\xi,\gamma  \eta}$. Typically this set is a geodesic arc, although if $\xi = \gamma \eta$ it is a point, and if $\xi = - \gamma \eta$ then it is all of $S_{\pi(\xi)}M$. The {\it interior} ${\bf C}_{\xi,\eta}^\circ$, consisting of those elements of ${\bf C}_{\xi,\eta}$ for which $\gamma  \eta \ne \pm\xi$ and $\zeta $ is an interior point of $\overline{\xi,\gamma  \eta}$, is then a smooth manifold diffeomorphic to $(0,1)\times (G_o\setminus\{\gamma: \gamma  \bar o \ne \pm \bar o\}) $.

The various $\CC_{\xi,\eta}$ may be identified with the model fiber
$$
{\bf C}:= \{( \zeta,\gamma) \in   S_oM \times G_o: \zeta \in \overline{\bar o, \gamma  \bar o}\} ,
$$
canonically up to the action \eqref{eq:action} of $G_{\bar o}\times G_{\bar o}$.
 Clearly ${\bf C}$ is compact and semialgebraic of dimension $1+\dim G_o$, hence  has finite volume of this dimension.
%
We may take the diffeomorphism
 \begin{align}\label{eq:C current}
(0,1) \times  (G_o\setminus \{\gamma: \gamma  \bar o = \pm\bar o\})  &\to  {\bf C}^\circ \subset S_oM \times G_o, \\
\notag (t,\gamma) &\mapsto ( \proj ((1-t) \bar o + t \gamma  \bar o),\gamma)
 \end{align}
 to preserve orientations, thus endowing $\CC$ with the structure of an integral current with boundary
 \begin{equation}\label{eq:bdry C}
 \partial \CC =  p_{1*} G_o  -  p_{0*} G_o + K,
 \end{equation}
 where $ \supp K \subset  S_oM \times \{\gamma\in G_o: \gamma  \bar o = \pm\bar o\}) $ and $p_0,p_1:G_o \to S_oM\times G_o$ are given by
 \begin{align}
\notag  p_1(\gamma) &:=( \gamma  \bar o,\gamma ),\\
\label{eq:maps in bdry C} p_0(\gamma)&:= ( \bar o,\gamma).
 \end{align}

\begin{lem}\label{lem:main formal}
Fiber integration over $\CC$ yields a well-defined $G\times G$-equivariant operator
$$
\pi_{\CC*}: \Omega^*(\EE) \to \Omega^*(SM \times SM)
$$
of degree $-\dim \CC = - (1+ \dim G_o)$. In particular, if $\beta \in \Omega^*(SM)^G$ and $d\gamma$ is an invariant volume form for $G$ then there are constants $c_{i,j}$ such that
\begin{equation}\label{eq:}
\pi_{\CC *} (Z^*\beta \wedge \Gamma^* d\gamma) = 
\sum_{1\le i,j\le N} c_{i,j} \, \Xi^*\beta_i \wedge H^* \beta_j,
\end{equation}
 where $\beta_1,\dots, \beta_N$ constitute a basis for $\Omega^*(SM)^G$.
\end{lem}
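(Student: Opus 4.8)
\textbf{Proof plan for Lemma \ref{lem:main formal}.}

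The plan is to establish the well-definedness and equivariance of $\pi_{\CC*}$ by reducing to the standard theory of fiber integration, and then to extract the specific formula \eqref{eq:} purely by representation-theoretic bookkeeping. First I would address why $\pi_{\CC*}$ is well-defined as a map $\Omega^*(\EE)\to \Omega^*(SM\times SM)$. The subtlety is that $\CC$ is not a subbundle of $\E$ with smooth fibers: the fibers $\CC_{\xi,\eta}$ degenerate to a point when $\xi=\gamma\eta$ and to all of $S_{\pi(\xi)}M$ when $\xi=-\gamma\eta$. However, the whole $\CC_{\xi,\eta}$-bundle $\CC \subset \EE$ is (via the identifications with the model fiber $\CC$, canonical up to the $G_{\bar o}\times G_{\bar o}$-action \eqref{eq:action}) a \emph{fixed} compact semialgebraic integral current of dimension $1+\dim G_o$, varying smoothly --- indeed $G$-equivariantly --- over $SM\times SM$. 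So the right framework is to view $\pi_{\CC*}(\omega)$ as the fiberwise slice-and-push: for $(\xi,\eta)\in SM\times SM$ set $\langle \pi_{\CC*}\omega, v_1\wedge\dots\wedge v_k\rangle := \int_{\CC_{\xi,\eta}} \iota_{\widetilde v_1}\cdots \iota_{\widetilde v_k}\,\omega$, where $\widetilde v_i$ are local horizontal lifts of the $v_i$ with respect to any connection, and one checks this is independent of the lifts because $\CC_{\xi,\eta}$ is a \emph{cycle relative to its boundary} in each fiber --- more precisely, because integrating a form along the fiber kills the ambiguity, which lives in the image of $d$(vertical) and contributes only boundary terms that need separate care. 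The cleanest route is to observe that locally over a trivializing open set $\EE|_{O} \cong O \times (S_oM\times G_o)$ the operator is literally the classical fiber integration $\int_{\CC}$ over the \emph{fixed} compact semialgebraic current $\CC$ in the model fiber, which by \cite{gmt} (or the theory of integration over the fibers of a fiber bundle with a fixed current fiber) is a well-defined smooth operator of degree $-(1+\dim G_o)$; the transition functions act through \eqref{eq:action}, under which $\CC$ is invariant, so the local definitions patch.

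Next I would verify $G\times G$-equivariance. The left $G\times G$-action on $\EE$ given in Section \ref{sect:main diagram} intertwines the actions on $SM\times SM$ and on $SM\times G$, and crucially it maps $\CC_{\xi,\eta}$ to $\CC_{\gamma_0\xi,\gamma_1\eta}$ as oriented currents: this is immediate from the description $\CC_{\xi,\eta}=\{(\zeta,\gamma):\zeta\in\overline{\xi,\gamma\eta}\}$, since the action sends $(\zeta,\gamma)\mapsto(\gamma_0\zeta,\gamma_0\gamma\gamma_1\inv)$ and a point $\zeta$ on the minimizing geodesic from $\xi$ to $\gamma\eta$ goes to $\gamma_0\zeta$ on the minimizing geodesic from $\gamma_0\xi$ to $\gamma_0\gamma\gamma_1\inv\cdot\gamma_1\eta=\gamma_0\gamma\eta=\gamma_0\cdot(\gamma\eta)$ (using that $G$ acts by isometries). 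Orientation-preservation follows from the explicit parametrization \eqref{eq:C current} being natural under the group action. Hence for any diffeomorphism $\Psi$ of the total space covering a bundle map and carrying fibers to fibers orientation-preservingly, $\pi_{\CC*}\circ \Psi^* = (\text{base map})^*\circ \pi_{\CC*}$ by the change-of-variables in the fiber integral. Applying this to the $G\times G$-action gives equivariance.

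For the final formula \eqref{eq:}, I would argue as follows. The form $Z^*\beta\wedge\Gamma^*d\gamma \in \Omega^*(\EE)$ is invariant under the diagonal-type left action: $d\gamma$ is bi-invariant (or at least left-invariant, which is all we use here) so $\Gamma^*d\gamma$ is invariant under the $G\times G$-action composed with the $G$-action on the $G$-factor; more simply, $Z^*\beta$ is $G$-invariant for the action on the $Z$-factor and $\Gamma^*d\gamma$ transforms by the modular character, but since everything in sight is a volume form pulled back it is genuinely invariant under left translations. The upshot is that $\pi_{\CC*}(Z^*\beta\wedge\Gamma^*d\gamma)$ is a $G$-invariant differential form on $SM\times SM$ of degree $(\deg\beta + \dim G) - (1+\dim G_o) = \deg\beta + (\dim G - 1 - \dim G_o) = \deg\beta + \dim SM - \dim G_o\cdot 0$... let me instead just say: it is a $G$-invariant form on $SM\times SM$ of the appropriate degree. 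By the isotropy hypothesis, $SM\times SM \cong (G/G_{\bar o})\times(G/G_{\bar o})$, and $G$-invariant forms on it are identified with $(G_{\bar o}\times G_{\bar o})$-invariant elements of $\bigwedge^* T_{\bar o}SM\otimes\bigwedge^* T_{\bar o}SM$. In the degree we care about ($\beta$ of top relevant degree $d-1$, giving the curvature-measure setting), the relevant $G$-invariant forms on $SM\times SM$ are spanned by the bidegree-$(d-1,d-1)$ pieces $\Xi^*\beta_i\wedge H^*\beta_j$ --- this is precisely the statement that the invariant algebra factors, which holds because the two $G/G_{\bar o}$ factors are acted on independently by the two copies of $G$ in $G\times G$ and we are restricting to the image of $Z^*\beta\wedge\Gamma^*d\gamma$, whose bidegrees in the two base directions are each $d-1$ by a dimension count on the fibers of $\Xi, H$ versus the fiber $\CC$. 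Therefore $\pi_{\CC*}(Z^*\beta\wedge\Gamma^*d\gamma) = \sum_{i,j} c_{i,j}\,\Xi^*\beta_i\wedge H^*\beta_j$ for suitable constants $c_{i,j}$ depending on $\beta$ (linearly) and on the fixed basis $\beta_1,\dots,\beta_N$.

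\textbf{Main obstacle.} The delicate point is the very first one: making rigorous that fiber integration over the singular, non-locally-trivial family $\CC \hookrightarrow \EE$ produces a \emph{smooth} form on $SM\times SM$, not merely a current or an $L^1_{loc}$ object. The degeneracies over the loci $\{\xi=\pm\gamma\eta\}$ mean the fibers change topology, so one cannot naively invoke the submersion case of fiber integration. I expect the resolution to be exactly the one implicit in the setup: $\CC$ is a \emph{fixed} compact semialgebraic integral current sitting in the model fiber $S_oM\times G_o$, invariant under the structure group $G_{\bar o}\times G_{\bar o}$, so $\pi_{\CC*}$ is the globalization of the perfectly classical operation ``integrate a smooth form over a fixed compact rectifiable set'' --- which \emph{does} output a smooth form, because the integrand depends smoothly on the base parameters and $\CC$ has finite mass. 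The boundary term $K$ in \eqref{eq:bdry C}, supported over the bad locus, will matter when one later applies Stokes (in the proof of Theorem B proper), but for the present Lemma it only needs to be recorded, not analyzed. I would therefore spend the bulk of the writeup pinning down this globalization-of-a-fixed-fiber-current construction and its functoriality, and treat the equivariance and the final formula as comparatively routine consequences.
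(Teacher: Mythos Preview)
Your proposal is correct and in fact supplies the argument that the paper merely defers to \cite{fu90}, Section~1; the paper's own proof consists solely of that citation. The key mechanism you identify---that $\CC$ is a \emph{fixed} compact semialgebraic integral current in the model fiber $S_oM\times G_o$, invariant under the structure group $G_{\bar o}\times G_{\bar o}$, so that fiber integration globalizes from local trivializations---is exactly the content of that reference. One small slip to clean up: where you write ``a $G$-invariant differential form on $SM\times SM$'' you mean $G\times G$-invariant (as your subsequent passage to $(G_{\bar o}\times G_{\bar o})$-invariants confirms); it is precisely this stronger invariance, together with the elementary identity $(\bigwedge^*V\otimes\bigwedge^*W)^{H\times K}=(\bigwedge^*V)^H\otimes(\bigwedge^*W)^K$, that forces the expansion in the $\Xi^*\beta_i\wedge H^*\beta_j$.
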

\begin{proof} Cf. Section 1 of \cite{fu90}.
\end{proof}

The fiber integration operator gives rise to the following current theoretic construction. Let $S,T $ be currents living in $SM$. Then there is a well-defined  fiber product current $S \times T \times_{\E} \CC$ in $\EE$, given in any local trivialization by the corresponding Cartesian product. In particular, if $S,T$ are integral then so is this fiber product. The description of $\CC$ above gives rise to the following alternative local expression.

\begin{lem}\label{lem:mapping C} Let $S,T $ be currents living in $SM$, and let $W \subset G$ be an open set such that
$$
\gamma \in W \implies \spt S \cap s(\gamma\, \spt T) = \emptyset = \spt S \cap \gamma\, \spt T.
$$
Then
$$
(Z,\Gamma)_*(S \times T \times_{\E} \CC ) \with \Gamma\inv(W) = 
 c_*\left[ (S \times T \times (0,1) \times_{\F} G_o)\with \Gamma\inv(W)\right]
$$
where
$$
c(\xi,\eta, t,\gamma):  = (\nu((1-t)\xi + t\gamma\eta),\gamma). \quad \square
$$
\end{lem}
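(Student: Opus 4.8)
The plan is to unwind both sides of the claimed identity using the explicit parametrization \eqref{eq:C current} of the interior of $\CC$, together with commutativity of pushforward and slicing \eqref{eq:funct of slice} and the naturality of fiber products under pullback by local trivializations. First I would observe that since $\CC$ differs from its interior $\CC^\circ$ by a set of lower dimension (the boundary term $K$ in \eqref{eq:bdry C} is supported on a set where $\gamma\bar o = \pm\bar o$), and since the hypothesis on $W$ guarantees that for $\gamma\in W$ none of the problematic configurations $\xi = \pm\gamma\eta$ occur on $\spt S \times \spt T$, the restriction of $S\times T\times_{\E}\CC$ to $(\Xi,H,\Gamma)\inv(\spt S\times\spt T\times W)$ is carried entirely by the smooth locus parametrized by \eqref{eq:C current}. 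On that locus the map $(\xi,\eta,t,\gamma)\mapsto (\xi,\eta,\nu((1-t)\xi + t\gamma\eta),\gamma)$ is a diffeomorphism onto (an open subset of) $\EE$, and it is orientation-preserving because \eqref{eq:C current} was chosen to preserve orientations and the product/fiber-product orientations on both sides are defined via the same local trivializations of $\E$ and $\F$.

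Next I would push forward by $(Z,\Gamma)$. Composing the diffeomorphism above with $(Z,\Gamma)$ gives exactly the map $(\xi,\eta,t,\gamma)\mapsto (\nu((1-t)\xi+t\gamma\eta),\gamma) = (c(\xi,\eta,t,\gamma),\gamma)$ in the first slot and $\gamma$ in the second; but this is precisely $c$ as defined in the statement, once one notes that $c$ as written lands in $SM\times G$. So the pushforward $(Z,\Gamma)_*(S\times T\times_{\E}\CC)$ restricted to $\Gamma\inv(W)$ equals $c_*$ of the corresponding fiber product $S\times T\times (0,1)\times_{\F} G_o$ restricted to $\Gamma\inv(W)$. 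The identification of the fiber product over $\E$ with the fiber product over $\F$ times the interval factor $(0,1)$ is exactly the content of the parametrization \eqref{eq:C current}: the fiber of $\E$ over $(\xi,\eta)$ is $S_oM\times G_o$, the fiber of $\CC$ inside it is (up to lower-dimensional set) $(0,1)\times(G_o\setminus\{\ldots\})$, and $\F$ has fiber $G_o$, so pulling the current $\CC$ back fiberwise and forming the product with $S\times T$ is the same, in any trivialization, as forming the Cartesian product $S\times T\times (0,1)\times G_o$ and then pushing forward by the fiberwise diffeomorphism \eqref{eq:C current}.

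To make this rigorous I would work in a single local trivialization first: choose, as in the proof of Theorem \ref{thm:ma addition}, open $U,V$ covering the relevant part of $M\times M$ with smooth sections $\omega$, trivialize $\E$ over $\Xi\inv(\pi\inv U)\cap H\inv(\pi\inv V)$ and $\F$ correspondingly, check the identity there as a statement about Cartesian products of currents and an orientation-preserving fiberwise diffeomorphism, and then patch, using that both sides are defined locally and agree on overlaps by the uniqueness of the trivialization-independent fiber product. The boundary contributions are handled by noting that $\spt K$ meets no fiber over $(\xi,\eta)\in\spt S\times\spt T$ with $\gamma\in W$, so restricting to $\Gamma\inv(W)$ kills them; this is where the second half of the hypothesis ($\spt S\cap\gamma\spt T = \emptyset$, excluding $\xi = \gamma\eta$) and the first half ($\spt S\cap s(\gamma\spt T)=\emptyset$, excluding $\xi=-\gamma\eta$) are both used.

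The main obstacle I expect is bookkeeping of orientations and signs: one must verify that the orientation on $S\times T\times_{\E}\CC$ induced by the local trivialization of $\E$ matches, under \eqref{eq:C current} and the conventions \eqref{eq:bundle orientation}, \eqref{eq:orientation of F}, the orientation on $S\times T\times(0,1)\times_{\F}G_o$, so that no stray sign appears in front of $c_*$. Since \eqref{eq:C current} was explicitly declared orientation-preserving and the fiber-product orientations on both sides are pinned down by the same section-based trivializations, this should come out with coefficient $+1$, but it is the step requiring the most care. The rest is the routine verification that $c$ restricted to the relevant compact set is Lipschitz (immediate, since $(1-t)\xi+t\gamma\eta$ stays bounded away from the zero section precisely because $\xi\ne-\gamma\eta$ there) so that the pushforwards make sense as integral currents.
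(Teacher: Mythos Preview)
Your proposal is correct and follows exactly the approach the paper intends: the paper gives no proof beyond the $\square$, treating the lemma as an immediate consequence of the orientation-preserving parametrization \eqref{eq:C current} of $\CC^\circ$ together with the hypothesis on $W$ that eliminates the singular locus $\xi=\pm\gamma\eta$. Your write-up simply makes explicit the definition-chasing (local trivialization, identification of fiber products, orientation check, and Lipschitz property of $c$ away from the zero section) that the $\square$ suppresses.
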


\subsection{The formal kinematic formula} \label{sect:slicing} We recall the main construction of \cite{fu90}, applied in the \WDC~ context. Let   $A,B\in \WDC( M)$. 
We put for a.e. $\gamma \in G$  
\begin{align}\label{eq:nor of intersect}
\mathcal J(A,B,\gamma)&:= (-1)^{d\dim G + d -1}Z_* \langle N(A)\times N(B) \times_{\mathcal E} \mathbf{C}, \Gamma, \gamma \rangle \\
\notag &+ N(A)\with \pi\inv (\gamma B) + N(\gamma  B) \with \pi\inv( A).
\end{align}

It will be useful to express the second and third terms of the right hand side of \eqref{eq:nor of intersect} in terms of slicing. 
As usual we denote by $\Gamma$ the projection maps of the spaces
$$
SM \times M \times_{\F} G_o , \quad  M \times SM \times_{\F} G_o
$$
 to $G$, and  put
$\Xi , H$ 
for the respective projections to the $SM$ factors.

\begin{lem}\label{lem:restrict as slice} For a.e. $\gamma \in G$
\begin{align}
\label{eq:slice1} \Xi_{*} \langle N(A) \times B \times_{\F} G_o, \Gamma,\gamma \rangle &= (-1)^{(d-1)\dim G}N(A) \with \pi\inv (\gamma   B), \\
 \label{eq:slice2} H_{*} \langle A \times N(B) \times_{\F} G_o, \Gamma,\gamma \rangle &= (-1)^{d +(d-1)\dim G}  N(  B) \with \pi\inv (\gamma\inv A)
\end{align}
\end{lem}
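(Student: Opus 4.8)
The plan is to prove \eqref{eq:slice1} and \eqref{eq:slice2} by reducing each slicing identity to the basic product-slice formulas \eqref{eq:slice orientation}, using the orientation conventions for $\FFF$ recorded in Section \ref{sect:manifold conventions}, together with the key observation that restriction to $\pi\inv(\gamma B)$ is exactly what a slice of the fiber product $N(A)\times B\times_\F G_o$ over $\Gamma$ computes. I would work locally: cover $M\times M$ by charts $U\times V$ admitting smooth sections $\omega$ as in \eqref{eq:bundle orientation}, so that $SM\times M\times_\F G_o$ is trivialized over $(X,Y)\inv(U\times V)$ via $(\xi,y,\bar\gamma)\mapsto(\xi,y,\omega_{\pi(\xi)}\bar\gamma\omega_y\inv)$, an orientation-preserving local diffeomorphism onto its image. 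Under this trivialization the fiber product becomes the honest Cartesian product $(N(A)\with\pi\inv U)\times(B\cap V)\times G_o$, the map $\Gamma$ becomes $(\xi,y,\bar\gamma)\mapsto\omega_{\pi(\xi)}\bar\gamma\omega_y\inv$, and $\Xi$ becomes the projection to the first factor.

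The computation then proceeds in two stages. First, slicing the Cartesian product $(N(A)\times B)\times G_o$ by the $G_o$-valued map $(\xi,y,\bar\gamma)\mapsto\bar\gamma$ at the point $\bar\gamma_0:=\omega_{\pi(\xi)}\inv\gamma\,\omega_y$ and pushing forward by the projection $\pi_{SM\times M}$ produces, by the second line of \eqref{eq:slice orientation} with $k=\dim(SM\times M)=2d-1$ and $\dim N=\dim G_o$, the factor $(-1)^{(2d-1)\dim G_o}$ times $N(A)\times B$ restricted, via the constraint $\omega_{\pi(\xi)}\bar\gamma_0\omega_y\inv=\gamma$ i.e. $\gamma y=\pi(\xi)$, to the diagonal-type subset $\{(\xi,y):\pi(\xi)=\gamma y\}$; projecting further to the $SM$ factor this is precisely $N(A)\with\pi\inv(\gamma B)$ (for a.e.\ $\gamma$, so that the slice is genuinely the geometric intersection). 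Second, I would reconcile the slicing parameter: slicing over $\Gamma$ at $\gamma\in G$ versus slicing over the fiber coordinate $\bar\gamma$ at $\omega\inv\gamma\omega$ differ, by commutativity of pushforward and slicing \eqref{eq:funct of slice} applied to the diffeomorphism $\Phi(x,y,\bar\gamma)=(x,y,\omega_x\bar\gamma\omega_y\inv)$ of \eqref{eq:bundle orientation}, only through the Jacobian sign of $\Phi$, which is $+1$ by the orientation convention; bookkeeping of $\dim G=2d-1+\dim G_o$ converts $(-1)^{(2d-1)\dim G_o}$ into $(-1)^{(d-1)\dim G}$ modulo an even exponent, giving \eqref{eq:slice1}.

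For \eqref{eq:slice2} the structure is identical but the roles of the two factors are swapped, so there are two additional sources of sign. One is that in $A\times N(B)\times_\F G_o$ the $SM$-valued current sits in the \emph{second} slot, so the relevant product-slice formula is now the \emph{first} line of \eqref{eq:slice orientation}, contributing a sign $(-1)^{k\dim N}$ with $k=\dim(M\times SM)$ again, but the order of the Cartesian factors in $M\times SM$ versus $SM\times M$ introduces a commutation sign $(-1)^{d(d-1)}=+1$; the genuinely new contribution is the factor $(-1)^d$ that already appears in \eqref{eq:Pi2} and \eqref{eq:pi Go 2}, reflecting the asymmetry between $X_*$ and $Y_*$ slices of $\FFF$. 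The constraint $\gamma y=x$ now reads, after pushing forward by $H$ to the second $SM$ factor, as restriction of $N(B)$ to $\pi\inv(\gamma\inv A)$ rather than $\pi\inv(\gamma A)$, because here it is the \emph{domain} point $y=\pi(\eta)$ that is being intersected with $\gamma\inv A$. Collecting the signs yields the stated $(-1)^{d+(d-1)\dim G}$.

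The main obstacle I anticipate is purely the sign bookkeeping: keeping straight the three independent sources of parity — the $(-1)^{k\dim N}$ from whether the sliced factor is the first or second in a Cartesian product \eqref{eq:slice orientation}, the commutation signs from reordering factors in $SM\times M$ versus $M\times SM$ and in $\EE$, and the intrinsic $(-1)^d$ asymmetry of $\FFF$ recorded in \eqref{eq:Pi1}--\eqref{eq:Pi2} — while simultaneously tracking that $\dim G=2d-1+\dim G_o$ so that exponents like $(2d-1)\dim G_o$ and $(d-1)\dim G$ agree modulo $2$. The geometric content (that a slice of a fiber product over $\Gamma$ is a restriction to a translate) is straightforward once a single chart trivialization is fixed; the only subtlety requiring the ``a.e.\ $\gamma$'' qualifier is that the Federer--Fleming slice coincides with the naive geometric intersection exactly for almost every parameter value, which is built into \eqref{eq:slice orientation} and \eqref{eq:funct of slice} and so needs no separate argument here.
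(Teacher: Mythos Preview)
Your strategy differs from the paper's. The paper first invokes \cite{gmt}, Theorem~4.3.8 to reduce from $B$ to $M$ (so that the slice becomes a restriction), and then shows $\Xi_*\langle N(A)\times M\times_\F G_o,\Gamma,\gamma\rangle=(-1)^{(d-1)\dim G}N(A)$ by passing through an aura $f$: it applies \eqref{eq:Pi1} to $\D(f)$ (a $d$-current), then derives the statement for $N(A)$ via \eqref{eq:def nf} and the boundary/slice commutation \eqref{eq:slice boundary}, picking up an extra $(-1)^{\dim G}$. Your approach, working directly with $N(A)$ in a local trivialization and never touching an aura, is a legitimate and in some ways more natural alternative.

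However, the execution has a genuine gap in the sign bookkeeping. You write $\dim G=2d-1+\dim G_o$, but this confuses $G_o$ (stabilizer of $o\in M$) with $G_{\bar o}$ (stabilizer of $\bar o\in SM$); the correct relation for the fiber of $\F$ is $\dim G=d+\dim G_o$. With the correct dimension, your claimed parity $(2d-1)\dim G_o\equiv(d-1)\dim G\pmod 2$ reduces to $(d-2)\dim G_o\equiv 0$, which fails for instance when $d$ and $\dim G_o$ are both odd. Relatedly, your ``first stage'' is not a well-defined slicing operation: you propose to slice $(N(A)\times B)\times G_o$ by the projection $\pi_{G_o}$ at the value $\bar\gamma_0=\omega_{\pi(\xi)}\inv\gamma\omega_y$, but this value depends on $(\xi,y)$, and one cannot slice at a moving target. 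The map $\Gamma\circ\tilde\Phi$ is $G$-valued, not $G_o$-valued, and is not a product projection.

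The clean way to carry out your idea is to use instead the analogue of \eqref{eq:orientation of F}: the orientation-preserving diffeomorphism $SM\times G\to SM\times M\times_\F G_o$, $(\xi,\gamma)\mapsto(\xi,\gamma\inv\pi(\xi),\gamma)$, under which $\Gamma$ becomes the honest projection $\pi_G$ and $\Xi$ becomes $\pi_{SM}$. Then \eqref{eq:slice orientation} with $S=N(A)$, $k=d-1$, $N=G$ gives $(-1)^{(d-1)\dim G}N(A)$ in one step, with no parity conversion needed; restriction to $B$ then follows from \cite{gmt}, Theorem~4.3.8. For \eqref{eq:slice2} the same device works, and the extra $(-1)^d$ comes from \eqref{eq:Pi2} exactly as you say.
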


\begin{proof} We prove \eqref{eq:slice1}, the proof of \eqref {eq:slice2} being similar.
By \cite {gmt}, Theorem 4.3.8,  the left hand side of \eqref{eq:slice1} is equal to the restriction to $\pi\inv (\gamma B)$ of $\Xi_{*} \langle N(A) \times M \times_{\F} G_o, \Gamma,\gamma \rangle $. Thus we wish to show that this last current equals $(-1)^{(d-1)\dim G} N(A)$. 

Let $f$ be an aura for $A$. From \eqref{eq:Pi1}  we deduce that 
\begin{equation}\label{eq:slice df}
\Xi_*\langle \D(f) \times M \times_{\F}G_o, \Gamma, \gamma\rangle =
(-1)^{d\dim G} \D(f).
\end{equation}
Using \eqref {eq:slice boundary} and \eqref {eq:def nf} we calculate
\begin{align*}
\Xi_* \langle N(A) \times M \times_{\F} G_o, \Gamma,\gamma \rangle &= \Xi_{*} \langle \proj_*\partial (\D(f) \with \pi\inv(A)) \times M \times_{\F} G_o, \Gamma,\gamma \rangle \\
&= \proj_*\Xi_{*} \langle \partial (\D(f) \with \pi\inv(A)) \times M \times_{\F} G_o, \Gamma,\gamma \rangle \\
&= (-1)^{\dim G} \proj_*\Xi_{*}\partial \langle  (\D(f) \with \pi\inv(A)) \times M \times_{\F} G_o, \Gamma,\gamma \rangle \\
&= (-1)^{\dim G} \proj_*\partial\left[\Xi_{*} \langle  \D(f) \times M \times_{\F} G_o, \Gamma,\gamma \rangle \with \pi\inv(A)\right]\\
&=(-1)^{(d+1)\dim G} \proj_*\partial\left[\D(f) \with \pi\inv(A)\right] \quad \text{by \eqref{eq:slice df})}\\
&=(-1)^{(d+1)\dim G} N(A).
\end{align*}
\end{proof}

We now prove a formal version of the kinematic formula \eqref{eq:main kf}.

\begin{prop}\label{prop:formal kf} Let $\beta_0 \in \Omega^{d-1,G}(SM)$ be a $G$-invariant  form of degree $d-1$, and let $\beta_1,\dots,\beta_N\in \Omega^{d-1,G}(SM)$ be a basis for the space of all such forms. Then there are constants $c_{ij}, 1\le i,j\le N$, such that for any compact \WDC~ sets $A,B \subset M$ and bounded Borel measurable functions  $\phi,\psi:M \to \R$
\begin{align}
\notag\int_G \int_{\mathcal J(A,B,\gamma)} \pi^*(\phi \cdot (\psi \circ \gamma\inv))\wedge \beta_0 \, d\gamma &= \sum_{i,j} c_{ij} \int_{N(A)} \pi^*\phi \wedge \beta_i \cdot \int_{N(B)}\pi^*\psi\wedge \beta_j \\
\label{eq:formal kf} & +\int_A \phi \cdot \int_{N(B)} \pi^*\psi \wedge \beta_0 \\
\notag & + \int_B\psi \cdot \int_{N(A)} \pi^*\phi\wedge \beta_0
\end{align}
%
%
\begin{proof} 
By the slicing theorem 4.3.2 (1) of \cite{gmt} and Lemma \ref{lem:restrict as slice}, the left hand side of \eqref{eq:formal kf} may be expressed as the sum 
\begin{align*}
(-1)^{d\dim G + d +1}\int_{N(A)\times N(B) \times_{\mathcal E} \mathbf{C}} & \Gamma^*d\gamma\wedge X^* \phi \wedge \, Y^* \psi \wedge  Z^* \beta_0 \quad + \\
&+ (-1)^{(d-1)\dim G} \int_{N(A) \times B \times_{\mathcal F} G_o} \Gamma^*d\gamma \wedge \Xi^*(\phi\wedge\beta_0) \wedge Y^*\psi  \\
 &+  (-1)^{d+ (d-1)\dim G}\int_{A \times N(B) \times_{\mathcal F} G_o} \Gamma^*d\gamma \wedge X^*\phi \wedge H^*(\psi \wedge \beta_0) \\
\end{align*}
corresponding respectively to the three terms in \eqref{eq:nor of intersect}, where (as usual) we have abused notation slightly in the labelling of the maps. By Lemma \ref{lem:main formal}, the first integral  may be expressed as 
$$
\sum_{1\le i,j\le N} c_{ij}\int_{N(A)} \pi^*\phi\wedge \beta_i \ \int_{N(B)} \pi^*\psi \wedge \beta_j.
$$

The second and third integrals become
$$
\int_{N(A) \times B \times_{\mathcal F} G_o}   \Xi^*(\phi\wedge\beta_0) \wedge Y^*\psi  \wedge \Gamma^*d\gamma
+(-1)^d \int_{A \times N(B) \times_{\mathcal F} G_o} X^*\phi \wedge H^*(\psi \wedge \beta_0)\wedge \Gamma^*d\gamma
$$
which by \eqref{eq:pi Go 1}, \eqref{eq:pi Go 2}
 yield the other terms on the right hand side of \eqref{eq:formal kf}.
\end{proof}

\end{prop}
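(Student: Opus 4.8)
The plan is to expand the left-hand side of \eqref{eq:formal kf} according to the definition \eqref{eq:nor of intersect} of $\mathcal J(A,B,\gamma)$ as a sum of three currents, and to treat the three resulting integrals over $G$ one at a time. The unifying tool is the slicing identity of \cite{gmt}, Theorem~4.3.2(1): if $T$ is an integral current of locally finite mass living over $G$ via a projection $\Gamma$, then $\int_G\bigl(\int_{\langle T,\Gamma,\gamma\rangle}\omega\bigr)\,d\gamma$ equals the single integral $\int_T\Gamma^*d\gamma\wedge\omega$ over $T$, up to a sign depending on degrees. The $\gamma$-dependence of the test function $\phi\cdot(\psi\circ\gamma\inv)$ evaporates once it is rewritten, on the relevant total space, as $X^*\phi\wedge Y^*\psi$, where $X,Y$ are the two base projections of the fibre bundle that records the relation $x=\gamma y$. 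Each fibre product appearing in \eqref{eq:nor of intersect} is a fibre product of integral currents over a smooth bundle with compact fibres, hence is itself integral of locally finite mass, so all these slices exist for a.e. $\gamma$; at this formal stage nothing is needed beyond the integrality of $N(A),N(B)$ (Theorem~\ref{thm:properties of n*}).

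For the ``quadratic'' term $(-1)^{d\dim G+d-1}Z_*\langle N(A)\times N(B)\times_{\mathcal E}\mathbf C,\Gamma,\gamma\rangle$ I would apply the slicing identity to rewrite its contribution to the left-hand side as a scalar multiple of $\int_{N(A)\times N(B)\times_{\mathcal E}\mathbf C}\Gamma^*d\gamma\wedge X^*\phi\wedge Y^*\psi\wedge Z^*\beta_0$. Fibre-integrating over $\mathbf C$ and invoking Lemma~\ref{lem:main formal}, which gives $\pi_{\mathbf C*}(Z^*\beta_0\wedge\Gamma^*d\gamma)=\sum_{i,j}c_{ij}\,\Xi^*\beta_i\wedge H^*\beta_j$, while $X^*\phi$ and $Y^*\psi$ are pulled back from $SM\times SM$ and so pass through the fibre integration, the term collapses --- by Fubini for the product current $N(A)\times N(B)$ --- to $\sum_{i,j}c_{ij}\int_{N(A)}\pi^*\phi\wedge\beta_i\cdot\int_{N(B)}\pi^*\psi\wedge\beta_j$. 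Any signs are harmless here, being absorbed into the undetermined constants $c_{ij}$.

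For the two ``linear'' terms I would first use Lemma~\ref{lem:restrict as slice} to realize the restriction of $N(A)$ to $\pi\inv(\gamma B)$ as a pushforward of a slice by $\Gamma$ of the fibre product $N(A)\times B\times_{\mathcal F}G_o$, and symmetrically to express the restriction of $N(\gamma B)$ to $\pi\inv(A)$ via $A\times N(B)\times_{\mathcal F}G_o$. Applying the slicing identity again converts the corresponding integrals over $G$ into scalar multiples of $\int_{N(A)\times B\times_{\mathcal F}G_o}\Gamma^*d\gamma\wedge\Xi^*(\pi^*\phi\wedge\beta_0)\wedge Y^*\psi$ and its mirror image. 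Now fibre-integrate over $G_o$: since $\Xi^*(\pi^*\phi\wedge\beta_0)$ and $Y^*\psi$ are pulled back from $SM\times M$, only $\Gamma^*d\gamma$ must be integrated, and \eqref{eq:pi Go 1} gives $\pi_{G_o*}(\Gamma^*d\gamma)\equiv Y^*(d\vol_M)\bmod X^*\Omega^*(M)$. The $X^*\Omega^*(M)$-correction drops out because $\Xi^*\beta_0$ already carries the full degree $d-1$ of the $N(A)$-factor, so any additional form pulled back from the first $M$-factor annihilates the product; what survives integrates over $N(A)\times B$ to a multiple of $\int_B\psi\cdot\int_{N(A)}\pi^*\phi\wedge\beta_0$. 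The mirror computation, using \eqref{eq:pi Go 2}, yields a multiple of $\int_A\phi\cdot\int_{N(B)}\pi^*\psi\wedge\beta_0$.

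The only real work, as usual in this circle of ideas, is sign bookkeeping: one must check that the parities accumulated from the prefactors $(-1)^{d\dim G+d-1}$, $(-1)^{(d-1)\dim G}$, $(-1)^{d+(d-1)\dim G}$ implicit in \eqref{eq:nor of intersect} and Lemma~\ref{lem:restrict as slice}, from the orientation conventions \eqref{eq:Pi1}, \eqref{eq:Pi2}, \eqref{eq:slice1}, \eqref{eq:slice2}, from the boundary--slice rule \eqref{eq:slice boundary}, and from the reorderings of wedge factors conspire to leave exactly the $+$ signs on the right of \eqref{eq:formal kf}. I anticipate no conceptual obstacle: the substantive ingredients --- Lemma~\ref{lem:main formal} for the quadratic term and the fibre-integration congruences \eqref{eq:pi Go 1}--\eqref{eq:pi Go 2} for the linear ones --- are already established, and the genuinely deep analysis of the paper (finiteness of Minkowski content, and hence representability of $N(A\cap\gamma B)$ as an integral current) is not needed for this formal statement but only afterwards, to identify $\mathcal J(A,B,\gamma)$ with the normal cycle of $A\cap\gamma B$ and thereby upgrade \eqref{eq:formal kf} to Theorem~B.
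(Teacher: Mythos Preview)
Your proposal is correct and follows essentially the same approach as the paper: expand $\mathcal J(A,B,\gamma)$ into its three constituents, use the slicing identity \cite[Theorem~4.3.2(1)]{gmt} together with Lemma~\ref{lem:restrict as slice} to rewrite each $\gamma$-integral as a single integral over the appropriate fibre product, then apply Lemma~\ref{lem:main formal} to the quadratic term and the congruences \eqref{eq:pi Go 1}--\eqref{eq:pi Go 2} to the two linear terms. Your remark that the $X^*\Omega^*(M)$-ambiguity in \eqref{eq:pi Go 1} is killed by the degree count on the $N(A)$-factor is exactly the mechanism that makes the fibre integration work, and your observation that the signs on the quadratic term are absorbed into the $c_{ij}$ is also how the paper handles it.
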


\subsection{Conclusion of the proof of Theorem B}
Together with Proposition \ref{prop:formal kf}, the following concludes the proof of Theorem B.

\begin{thm}\label{thm:J=N reg} If $A,B \subset M$ are \WDC~ subsets of $M$ then
$$\mathcal J(A,B,\gamma) = N(A\cap \gamma B) $$
for a.e. $\gamma \in G$.
\end{thm}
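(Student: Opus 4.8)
The plan is to verify that for almost every $\gamma \in G$, the current $\mathcal J(A,B,\gamma)$ defined by \eqref{eq:nor of intersect} satisfies the defining properties of the normal cycle $N(A\cap \gamma B)$; namely, that it equals $N^*(h_\gamma)$ (transported to $SM$) for a \DC~ aura $h_\gamma$ of $A\cap\gamma B$. By Proposition \ref{prop:max dc}, for a.e.\ $\gamma$ the function $h_\gamma = f + g\circ\gamma\inv$ is such an aura, where $f,g$ are fixed \DC~ auras for $A,B$; moreover Proposition \ref{prop:max dc} provides, near each such $\gamma_0$, the quantitative transversality needed to invoke the fiber-product descriptions of Lemma \ref{lem:mapping C} and Theorem \ref{thm:ma addition}. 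The heart of the argument is thus to compute $N^*(h_\gamma)$ directly from the differential cycle $\D(h_\gamma)$ via the provisional definition \eqref{eq:def nf}, and to match the result with the three terms of $\mathcal J(A,B,\gamma)$.

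First I would apply Theorem \ref{thm:ma addition} to write, for a.e.\ $\gamma$,
$$
\D(h_\gamma) = (-1)^{d\dim G}\,\Sigma_*\langle \D(f)\times \D(g)\times_{\F} G_o,\ \Gamma,\ \gamma\rangle,
$$
where $\Sigma(\xi,\eta,\gamma) = \xi + \gamma\eta$ on the cotangent-bundle version (equivalently the gradient cycle on $TM$). Then $N(A\cap\gamma B) = \nu_*\partial(\D(h_\gamma)\with \pi\inv(A\cap\gamma B))$. The region $\pi\inv(A\cap\gamma B)$ decomposes, on the product $\D(f)\times\D(g)\times_{\F}G_o$, as the locus $\{X\in A\}\cap\{Y\in \gamma\inv A\}$... more precisely one pulls back $\pi\inv(A\cap\gamma B)$ under $\pi\circ\Sigma$ and analyzes it in terms of the base points $x=\pi(\xi),\ y=\pi(\eta)$ with $x=\gamma y$. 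The key geometric input is the scaling/cone structure of $\D(f)$ near $A$ from Proposition \ref{prop:cone structure}, which lets one localize to a neighborhood of the zero section and reduce the boundary computation to the ``connecting current" picture: along a ray $(1-t)\xi + t\gamma\eta$, the relevant contributions to $\partial$ come from (i) the interior of the join (the $\CC$-term, matching the first term of $\mathcal J$), (ii) the endpoint $t=0$ where only $\xi$ survives (matching $N(A)\with\pi\inv(\gamma B)$), and (iii) the endpoint $t=1$ where only $\gamma\eta$ survives (matching $N(\gamma B)\with \pi\inv(A)$). This is exactly the content of Lemma \ref{lem:mapping C} and \eqref{eq:bdry C}, so the bookkeeping is to apply $\partial$ to the fiber product $N(A)\times N(B)\times_{\E}\CC$, use \eqref{eq:bdry C} to split $\partial\CC$ into its $p_{0*}, p_{1*}$ and $K$ pieces, observe that under the transversality hypothesis of Proposition \ref{prop:max dc} the support condition of Lemma \ref{lem:mapping C} holds so the $K$-piece (supported where $\gamma\bar o = \pm\bar o$) makes no contribution, and then identify the $p_{0*}, p_{1*}$ pieces with the restriction terms via Lemma \ref{lem:restrict as slice}. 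The sign $(-1)^{d\dim G + d -1}$ in \eqref{eq:nor of intersect} is precisely what is forced by combining the sign in Theorem \ref{thm:ma addition} with \eqref{eq:slice boundary} and the orientation conventions \eqref{eq:slice orientation}.

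I expect the main obstacle to be the careful justification that the boundary operator commutes with all the slicing and restriction operations in this singular setting, and in particular that no extra ``diagonal" or ``antipodal" boundary pieces appear---this is exactly the point where the old theory of \cite{fu94} needed ad hoc hypotheses, and here it is rescued by the Minkowski-content bound of Theorem \ref{WDCbundle} together with Proposition \ref{prop:max dc}, which guarantee that $\spt N(A)$ and $s(\gamma\,\spt N(B))$ are disjoint (and disjoint after projection to $M$) for a.e.\ $\gamma$, so that the locus $\xi = \pm\gamma\eta$ is avoided and the connecting current $\CC$ restricts to its smooth interior. Once this disjointness is in hand, the computation is a finite sequence of applications of \eqref{eq:funct of slice}, \eqref{eq:slice boundary}, \eqref{eq:slice orientation}, Lemma \ref{lem:mapping C}, and Lemma \ref{lem:restrict as slice}, carried out locally on a neighborhood $W$ of each good $\gamma_0$ and then patched together since the identity is local in $\gamma$. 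A secondary technical point is checking that $h_\gamma$, a priori defined only on $M$, extends to a genuine \DC~ aura on an ambient chart so that Theorem \ref{thm:n def} applies and $N(A\cap\gamma B)$ is well-defined independent of the aura; this is handled exactly as in the proof of Theorem \ref{thm:n def}, by further intersecting with a small ball via an auxiliary aura $\gamma^*h$ and invoking Theorem 1.2 of \cite{PR}.
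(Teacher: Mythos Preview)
Your overall instinct—use Theorem \ref{thm:ma addition} to express $\D(h_\gamma)$ and then exploit the cone structure of Proposition \ref{prop:cone structure}—matches the paper's. But the route the paper actually takes is quite different from the one you sketch, and your description of the ``bookkeeping'' step does not line up with what is really needed.

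The paper never applies the boundary operator $\partial$ at all. Instead it reduces, via \eqref{eq:cone 4}, to an identity of \emph{conic} cycles (Lemma \ref{lem:mu*J}):
\[
\vec N(A\cap\gamma B)=z_*(A\cap\gamma B)+m_*\bigl([0,\infty)\times \J(A,B,\gamma)\bigr).
\]
The left side is computed as $\lim_{t\to\infty} m_{t*}\D(h_\gamma)$; by Theorem \ref{thm:ma addition} and the strong sense of the limit in \eqref{eq:cone 3} this equals $(-1)^{d\dim G}\Sigma_*\langle \vec N(A)\times\vec N(B)\times_{\F}G_o,\Gamma,\gamma\rangle$. One then expands $\vec N(A)\times\vec N(B)$ into the four tensor pieces $z_*A\times z_*B$, $z_*A\times m_*([0,\infty)\times N(B))$, its transpose, and $m_*([0,\infty)\times N(A))\times m_*([0,\infty)\times N(B))$. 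The first three of these yield $z_*(A\cap\gamma B)$ and the cones over the two restriction terms directly from \eqref{eq:Pi1}. The last piece is identified with the cone over the $\CC$-term not via $\partial\CC$ but via an explicit orientation-preserving change of variables $(s,t)\in[0,\infty)\times(0,1)\to(\sigma,\tau)\in[0,\infty)^2$ that turns $\sigma\xi+\tau\gamma\eta$ into $s\,\nu((1-t)\xi+t\gamma\eta)$, valid precisely because Proposition \ref{prop:max dc} guarantees $\xi$ and $\gamma\eta$ are linearly independent for $\gamma\in W$.

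By contrast, you propose to compute $\nu_*\partial(\D(h_\gamma)\with\pi^{-1}(A\cap\gamma B))$ and then say the three terms of $\J$ arise by applying $\partial$ to $N(A)\times N(B)\times_{\E}\CC$ and splitting $\partial\CC$ via \eqref{eq:bdry C}. These are two different boundary computations, and you have not explained how the first reduces to the second. The missing link is exactly the two-cones-to-one-cone-times-interval identification above (or something equivalent); without it there is no evident reason why taking $\partial$ of the restriction $\D(h_\gamma)\with\pi^{-1}(A\cap\gamma B)$ should produce the \emph{slice} $Z_*\langle N(A)\times N(B)\times_{\E}\CC,\Gamma,\gamma\rangle$ rather than some other current. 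Your approach could likely be pushed through, but the step you label ``bookkeeping'' is in fact the substantive geometric identification, and the paper's cone-level argument handles it more cleanly by avoiding $\partial$ altogether.
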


By \eqref{eq:cone 4}, this follows from

\begin{lem}\label{lem:mu*J} For a.e. $\gamma \in G$
\begin{equation}\label{eq:vec n = z}
\vec N(A\cap \gamma B)  = z_*(A\cap \gamma B) + m_*([0,\infty) \times \J(A,B,\gamma)).
\end{equation}
\end{lem}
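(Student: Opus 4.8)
The plan is to reduce \eqref{eq:vec n = z} to the cone-structure identity \eqref{eq:cone 1} applied to a suitable aura for $A\cap\gamma B$, namely $h_\gamma=f+g\circ\gamma\inv$ where $f,g$ are DC auras for $A,B$. By Proposition~\ref{prop:max dc}, for $\gamma$ outside a null set $C$ the function $h_\gamma$ is indeed an aura for $A\cap\gamma B$, and moreover the quantitative second paragraph of that proposition provides, near any such $\gamma_0$, open sets $U\supset A$, $V\supset B$ and $\eps_0>0$ with $\ell(\xi+\gamma\eta)>\eps_0$ whenever $\gamma y=x\in(U\cap\gamma V)\setminus(A\cap\gamma B)$, $\xi\in\partial f(x)$, $\eta\in\partial g(y)$. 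Since by Theorem~\ref{thm:ma addition} we have $\D(h_\gamma)=(-1)^{d\dim G}\Sigma_*\langle\D(f)\times\D(g)\times_{\F}G_o,\Gamma,\gamma\rangle$ for a.e.\ $\gamma$, the support of $\D(h_\gamma)\with\pi\inv((U\cap\gamma V)\setminus(A\cap\gamma B))$ avoids a neighborhood of the zero section; thus Proposition~\ref{prop:cone structure} applies to $h_\gamma$ with $A\cap\gamma B$ in place of $A$, giving
$$
\vec N(A\cap\gamma B)=z_*(A\cap\gamma B)+m_*\bigl([0,\infty)\times N^*(A\cap\gamma B)\bigr)=\lim_{t\to\infty}m_{t*}\bigl(\D(h_\gamma)\with\pi\inv(U\cap\gamma V)\bigr).
$$
So it suffices to show that this limit equals $z_*(A\cap\gamma B)+m_*([0,\infty)\times\J(A,B,\gamma))$, equivalently (by \eqref{eq:cone 4} and the strong sense of the limit in the Remark after Proposition~\ref{prop:cone structure}) that $N^*(A\cap\gamma B)=\J(A,B,\gamma)$ up to the identification $S^*M\simeq SM$ --- but proving that identity directly is circular, so instead I will compute the limit of $m_{t*}\D(h_\gamma)$ from the right-hand side of the Theorem~\ref{thm:ma addition} formula.

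The core computation is to commute the dilation limit $\lim_t m_{t*}$ with the slicing and fiber-product operations in $(-1)^{d\dim G}\Sigma_*\langle\D(f)\times\D(g)\times_{\F}G_o,\Gamma,\gamma\rangle$. First I would observe that $m_t\circ\Sigma=\Sigma\circ(m_t\times m_t\times\mathrm{id})$ on $\tilde\FFF$, so that $m_{t*}\Sigma_*\langle\cdots\rangle=\Sigma_*\langle (m_{t*}\D(f))\times(m_{t*}\D(g))\times_{\F}G_o,\Gamma,\gamma\rangle$ (slicing by $\Gamma$ commutes with the dilations since $\Gamma$ factors through the $G$-projection, which the $m_t$'s fix). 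By \eqref{eq:cone 3} applied to $f$ and to $g$, the dilated gradient cycles converge, in the strong local sense of the Remark, to the conic cycles $\vec N^*(A)=z_*A+m_*([0,\infty)\times N^*(A))$ and $\vec N^*(B)$ respectively. Passing to the limit (restricting everything to the preimage of a compact set, where by the quantitative Proposition~\ref{prop:max dc} the relevant currents are uniformly controlled away from the singular locus so that the fiber product and slice are continuous under this convergence) gives
$$
\vec N(A\cap\gamma B)=(-1)^{d\dim G}\Sigma_*\bigl\langle \vec N(A)\times\vec N(B)\times_{\F}G_o,\ \Gamma,\ \gamma\bigr\rangle,
$$
where I now write $\vec N(A)=z_*A+m_*([0,\infty)\times N(A))$ for the image in $TM$.

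It then remains to expand the right-hand side using the conic-cone structure of $\vec N(A)$ and $\vec N(B)$ and match it with $z_*(A\cap\gamma B)+m_*([0,\infty)\times\J(A,B,\gamma))$. Writing $\vec N(A)=z_*A+m_*([0,\infty)\times N(A))$ and similarly for $B$, bilinearity of the Cartesian/fiber product distributes $\Sigma_*\langle\vec N(A)\times\vec N(B)\times_{\F}G_o,\Gamma,\gamma\rangle$ into four pieces. The $z_*A\times z_*B$ piece, after the fiber product over $\F$ imposes $\gamma\pi(\eta)=\pi(\xi)$ and $\Sigma$ adds the (both zero) covectors, yields exactly $z_*(A\cap\gamma B)$ (up to the sign $(-1)^{d\dim G}$ which is absorbed as in \eqref{eq:Pi1}). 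The mixed piece $z_*A\times m_*([0,\infty)\times N(B))$ produces a cone over $N(\gamma B)\with\pi\inv A$ --- here one recognizes \eqref{eq:slice2} of Lemma~\ref{lem:restrict as slice} with the geometry that $\Sigma$ restricted to $\{\xi=0\}$ is just $\eta\mapsto\gamma\eta$ --- and symmetrically the piece $m_*([0,\infty)\times N(A))\times z_*B$ gives the cone over $N(A)\with\pi\inv(\gamma B)$ via \eqref{eq:slice1}. The remaining piece, the fiber product of the two genuine cones, is precisely the construction that in the isotropic setting produces (after the strong limit and the geodesic-join description of $\CC$ from Lemma~\ref{lem:mapping C}) the cone over $(-1)^{d\dim G+d-1}Z_*\langle N(A)\times N(B)\times_{\E}\CC,\Gamma,\gamma\rangle$; this is the content of the formal identity relating the linear join of two conic cycles in $\R^n$ (or its curved analogue) to the connecting current $\CC$. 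Assembling the four pieces recovers the three terms of $\J(A,B,\gamma)$ in \eqref{eq:nor of intersect}, completing the proof.

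The main obstacle I anticipate is the last step: making rigorous the identification of the ``genuine cone $\times$ genuine cone'' fiber product with the $\CC$-term. This requires a careful analysis of the map $\Sigma$ on the product of two rays emanating from covectors $\xi,\gamma\eta$ over a common base point --- the convex hull (or geodesic join) of the two rays --- and matching it, orientation and multiplicity included, with the geodesic-join parametrization \eqref{eq:C current} of $\CC$ together with the Federer--Fleming slicing conventions \eqref{eq:slice orientation}. Tracking all the sign factors $(-1)^{d\dim G}$, $(-1)^{d-1}$, $(-1)^{d\dim G+d-1}$ through the successive slices, pushforwards, and the two involutions $\iota,I$ is the bookkeeping-heavy heart of the argument; everything else (the dilation-limit interchange, the continuity of fiber products and slices under the strong convergence of \eqref{eq:cone 3}, which is where Proposition~\ref{prop:max dc} and the Minkowski-content estimate of Theorem~\ref{WDCbundle} are essential) is comparatively routine.
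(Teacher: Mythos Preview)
Your proposal is correct and follows essentially the same route as the paper: establish $\vec N(A\cap\gamma B)=(-1)^{d\dim G}\Sigma_*\langle\vec N(A)\times\vec N(B)\times_{\F}G_o,\Gamma,\gamma\rangle$ by commuting the dilation $m_{t*}$ with $\Sigma_*$ and slicing, then expand bilinearly into four pieces and match them with $z_*(A\cap\gamma B)$ plus the three cones in $\J(A,B,\gamma)$. The paper handles the ``cone $\times$ cone'' piece you flag as the obstacle exactly as you anticipate, by working in unsliced form over $\Gamma\inv(W)$ and exhibiting an explicit orientation-preserving change of variables $(s,t)\leftrightarrow(\sigma,\tau)$ between the $[0,\infty)\times(0,1)$ parametrization coming from \eqref{eq:C current} and the $[0,\infty)\times[0,\infty)$ parametrization of the product of rays.
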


\begin{proof}
We claim first that
\begin{equation}\label{eq:n slice}
\vec N(A\cap \gamma B) =  (-1)^{d\dim G}\Sigma_*\langle \vec N(A) \times \vec N(B) \times_{\F} G_o, \Gamma, \gamma \rangle .
\end{equation}

To see this, let $f, g$ be auras for $A,B$ respectively,  let $C\csubset G$ be as in Proposition \ref{prop:max dc}, and let $\gamma_0 \in G\setminus C$. Let $W\owns \gamma_0, U \supset A, V\supset B$ be  neighborhoods as in Proposition \ref{prop:max dc}.   Then Theorem \ref{thm:ma addition} and Proposition \ref{prop:cone structure} imply that for $\gamma \in W$ 
\begin{align*}
 (-1)^{d\dim G}\vec N(A\cap \gamma B) &= (-1)^{d\dim G}\lim_{t\to \infty} m_{t*}(\D(h_\gamma)\with \pi\inv(U \cap \gamma V)) \\
&= \lim_{t\to \infty} m_{t*}  \Sigma_*\langle  (\D(f) \with \pi\inv(U)) \times  (\D(g) \with \pi\inv(V))\times_{\F} G_o,\Gamma,\gamma\rangle\\
&= \lim_{t\to \infty} \Sigma_*\langle m_{t*} (\D(f) \with \pi\inv(U)) \times m_{t*} (\D(g) \with \pi\inv(V))\times_{\F} G_o,\Gamma,\gamma\rangle\\
&= \Sigma_*\langle \vec N(A) \times \vec N(B) \times_{\F} G_o, \Gamma, \gamma \rangle. 
\end{align*}
Here the third equality is justified by the Remark following Proposition \ref{prop:cone structure}.

To complete the proof, we show that the right hand side of \eqref{eq:vec n = z} equals the right hand side of \eqref {eq:n slice}.

From the definition of $\vec N$, the current $\vec N(A) \times \vec N(B) \times_{\F} G_o$ may be expressed as the sum of the four terms
\begin{align}
\label{eq:00} z_*A &\times z_*B \times_{\mathcal F} G_o ,\\
\label{eq:01} z_*A &\times (m_*([0,\infty)\times N(B))) \times_{\mathcal F} G_o ,\\
\label{eq:10}  (m_*([0,\infty)\times N (A)))  &\times  z_*B\times_{\mathcal F} G_o ,\\
\label{eq:11}   (m_*([0,\infty)\times N (A)))  &\times    (m_*([0,\infty)\times N (B))) \times_{\mathcal F} G_o .
\end{align}
From \eqref{eq:Pi1} we deduce that for a.e. $\gamma \in G$ 
$$
\Sigma_*\langle z_*A \times z_*B \times_{\mathcal F} G_o ,\Gamma,\gamma \rangle  = (-1)^{d\dim G} z_*(A\cap \gamma B)
$$
and by the same reasoning the $\Sigma$ images of the $\Gamma$ slices of \eqref{eq:01}, \eqref{eq:10} yield, respectively,
\begin{align*}
(-1)^{d\dim G} m_*([0,\infty) &\times N(\gamma B)) \with \pi\inv ( A),\\
(-1)^{d\dim G} m_*([0,\infty) &\times N(A)) \with \pi\inv (\gamma B)
\end{align*}
 
It remains to show that the same operation, applied to \eqref{eq:11}, yields  
$$(-1)^{d\dim G}m_*([0,\infty) \times Z_*\langle N (A) \times N (B) \times_{\E} \CC,\Gamma,\gamma \rangle).
$$
It is enough to prove this for $\gamma \in W$, where $W \subset G$ is an open set as in Proposition \ref{prop:max dc}. In fact we prove the corresponding fact  in unsliced form, i.e. that the image of
\begin{equation} \label{eq:image 1}
(-1)^{d\dim G + d-1}[0,\infty) \times\left[ (N (A) \times N (B) \times_{\E} \CC )\with \Gamma\inv(W)\right]
\end{equation}
under the map $(t, \xi,\eta, \zeta,\gamma) \mapsto (t\zeta, \gamma)$, corresponding to the right hand side of \eqref{eq:vec n = z}, is identical to the image of
\begin{equation}  \label{eq:image 2}
(-1)^{d\dim G}\left[(m_*([0,\infty)\times N (A)))\times (m_*([0,\infty)\times N (B)))\times_{\F} G_o\right] \with \Gamma\inv(W)
\end{equation}
under the map $(\Sigma,\Gamma):(\xi,\eta,\gamma)\mapsto (\xi +\gamma \eta,\gamma)$, which corresponds to \eqref{eq:n slice}.

By the description \eqref{eq:C current} of the orientation of $\CC$, the image of \eqref{eq:image 1}  is equal to the image of 
$$
(-1)^{d\dim G + d-1}[0,\infty) \times N (A)  \times N (B)\times (0,1) \times_{\F} G_o
$$
under $(s,\xi,\eta,t,\gamma)\mapsto (s\nu((1-t)\xi + t\gamma \eta),\gamma )$,
or in other words equal to the image of
$$
(-1)^{d\dim G }[0,\infty) \times N (A)  \times (0,1)\times N (B) \times_{\F} G_o
$$
under $(s,\xi,t,\eta,\gamma)\mapsto (s\nu((1-t)\xi + t\gamma \eta),\gamma )$.

Meanwhile, the image of \eqref{eq:image 2} is equal to the image of
$$
(-1)^{d\dim G}[0,\infty) \times N (A) \times [0,\infty) \times N (B) \times_{\F} G_o
$$
under $(\sigma,\xi,\tau,\eta,\gamma)\mapsto (\sigma \xi + \tau \gamma \eta,\gamma )$. Since $\xi,\gamma \eta $ are linearly independent for $\gamma \in W$, it is easy to see that the map $(s,t) \mapsto (\sigma,\tau)$, where
\begin{align*}
\sigma &= \frac{s t}{|t\xi + (1-t)\gamma \eta|}\\
\tau&= \frac {s(1-t)}{  |t\xi + (1-t)\gamma \eta|}
\end{align*}
defines an orientation-preserving diffeomorphism between  $ (0,\infty) \times (0,1)\to (0,\infty)\times (0,\infty)$ (a modification of the standard polar coordinate map $(s,t) \mapsto (s\cos \frac{\pi t} 2,s\sin \frac{\pi t} 2)$). This completes the proof.
\end{proof}
\section{Questions and conjectures}
\subsection{Structure of WDC sets}  As mentioned in the Introduction, the class of \WDC~ sets includes the finite unions of semiconvex sets in general position as studied by \cite{zahle}, and also the boundaries of all convex bodies. However, the variety of geometric behavior  exhibited by \WDC~ sets seems clearly much broader than is displayed by these examples. It would be interesting to understand their behavior in more detail.

Here are some specific, and na\"ive, questions. 

\begin{enumerate}
\item Suppose $A \subset \Rd$ is a \WDC~ set such that the intrinsic volumes $\mu_{k+1}(A) = \dots =\mu_d(A) =0$. Is it true that $A$ is rectifiable of dimension $k$? Using Crofton's formula for \WDC~ sets (cf. \cite {PR}), it is not difficult to show that $\mu_k(A)$ equals the $k$-dimensional integral geometric measure of $A$. Thus Federer's structure theorem (\cite{gmt}, Theorem 3.3.13) implies that this question will be settled in the affirmative by showing that the $k$-dimensional Hausdorff measure of $A$ is no greater than $\mu_k(A)$. We expect that this should be resolvable by studying the relation between $A$ and a suitable carrier of $N(A)$.
\item Is the distance function from a \WDC~ set $A$ necessarily \DC? Is it an aura for $A$? 
This would, in particular, provide us with a DC aura of $A$ which is semiconcave on the complement of $A$. 
Another natural question is then whether there is a DC aura for $A$ which is even smooth on $A^c$.
\item We call a set $A\subset\er^d$ {\bf locally WDC} if for every $x\in A$ there is $U_x$, an open neighborhood of $x$, and a WDC set $A_x$ such that $A\cap U_x=A_x\cap U_x.$
Is a locally WDC set necessarily WDC, for instance, are the $k$-dimensional DC~surfaces (see \cite[Section~3.1]{PR}) WDC? Note that they are locally WCD by \cite[Proposition~3.3]{PR}.

It is not difficult to see that the main points of the present paper (in particular the kinematic formulas) apply to locally WDC sets as well.
\item Is the generic projection of a WDC set again WDC?

\end{enumerate}

\subsection{Integral geometric regularity} The class of objects to which kinematic formulas apply seems impossible to describe in classical terms. On the other hand the quest to describe this class in {\it some} way is irresistible. One might attempt to come to terms with this situation as follows.

Let us say that a class $\mathcal C$ of compact subsets of $\Rd$ is an {\bf integral geometric regularity class} (or {\bf igregularity class}) if the following (probably redundant) list of axioms holds:
\begin{enumerate}
\item Every $A \in \mathcal C$ admits a normal cycle $N(A)$.
\item $\mathcal C$ is stable under diffeomorphisms $\phi$ of $\Rd$, with $N(\phi(A)) = \tilde \phi_*N(A)$, where $\tilde \phi:S^*\Rd \to S^* \Rd$ is the induced map.
\item If $A,B \in \mathcal C$ then for a.e. $\gamma \in \barsod$, the intersection $A \cap \gamma B\in \mathcal C$, with
$$
N(A\cap \gamma B) = \mathcal J(A,B,\gamma)
$$
where the right hand side is constructed formally from $N(A),N(B)$ as above.
\end{enumerate}
Thus the class of compact semiconvex sets is an igregularity class, and the present paper, together with \cite{PR}, implies that the same is true of the class of compact WDC subsets of $\Rd$. By \cite{fu94}, the class of compact subanalytic sets is an ``analytical igregularity class," i.e. the axioms above hold if (2) is replaced by stability under real analytic diffeomorphisms.

\begin{conjecture} There exists a unique maximal igregularity class of subsets of $\Rd$.
\end{conjecture}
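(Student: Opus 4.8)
\emph{Proof proposal.} The plan is to exhibit the maximal class as the union of all igregularity classes, and to reduce the fact that this union is itself an igregularity class --- which then makes it visibly the unique maximal one --- to a single amalgamation property of the family $\mathfrak I$ of igregularity classes of compact subsets of $\Rd$, partially ordered by inclusion.

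First I would record the easy structural facts about $\mathfrak I$. An arbitrary intersection of members of $\mathfrak I$ again lies in $\mathfrak I$, and the union of a chain in $\mathfrak I$ again lies in $\mathfrak I$; in both cases axioms (1), (2) are inherited immediately, and axiom (3) holds because any two relevant sets already lie together in a single member (the larger of the two, in the chain case). By Zorn's Lemma $\mathfrak I$ thus has maximal elements. The heart of the matter is the \textbf{amalgamation property}: any two igregularity classes $\mathcal C_1, \mathcal C_2$ are contained in a common member $\mathcal C_3 \in \mathfrak I$. Granting this, the union $\mathcal C_{\max} := \bigcup_{\mathcal C \in \mathfrak I}\mathcal C$ (a genuine set, there being only set-many compact subsets of $\Rd$) satisfies (1) and (2) trivially, and satisfies (3) because two sets $A \in \mathcal C_1$, $B \in \mathcal C_2$ in the union lie together in the amalgam $\mathcal C_3$, so $A \cap \gamma B \in \mathcal C_3 \subset \mathcal C_{\max}$ with the prescribed normal cycle for a.e.\ $\gamma$. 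Hence $\mathcal C_{\max} \in \mathfrak I$, and it is the unique maximal element.

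To amalgamate $\mathcal C_1$ and $\mathcal C_2$ I would take $\mathcal C_3$ to be the smallest class of compact sets containing $\mathcal C_1 \cup \mathcal C_2$ and closed under diffeomorphisms of $\Rd$ and under generic intersection, built by the obvious transfinite recursion (at successor stages adjoin $\phi(A)$ and $A \cap \gamma B$ for $A, B$ already present, $\phi$ a diffeomorphism and $\gamma$ in a suitable conull subset of $\barsod$; take unions at limit stages; the process stabilizes for cardinality reasons), carrying along at each step the only admissible normal cycle, namely $\tilde\phi_* N(A)$ and $\mathcal J(A, B, \gamma)$ as in \eqref{eq:nor of intersect}. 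Axioms (2) and (3) for $\mathcal C_3$ then hold by construction, provided the carried-along current really is the (necessarily unique) normal cycle of the set it is attached to. By a straightforward induction on the generation stage --- using uniqueness of normal cycles and their naturality under diffeomorphisms --- this reduces entirely, including at the very first step where $A \in \mathcal C_1$ and $B \in \mathcal C_2$ come from different classes, to the following abstract strengthening of Theorem \ref{thm:J=N reg}: \emph{if compact $A, B \subset \Rd$ each admit a normal cycle, then for a.e.\ $\gamma \in \barsod$ the current $\mathcal J(A,B,\gamma)$, formed from $N(A), N(B)$ as in \eqref{eq:nor of intersect}, is the normal cycle of $A \cap \gamma B$.}

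Establishing this abstract statement is the step I expect to be the genuine obstacle. In the present paper the corresponding fact for WDC sets (Theorem \ref{thm:J=N reg}) leans decisively on the support estimate of Theorem \ref{WDCbundle} --- that $\noreps f$, and hence $\spt N(A)$, has locally finite $(d-1)$-dimensional Minkowski content --- which is precisely what makes the fiber product $N(A) \times N(B) \times_{\mathcal F} G_o$ a well-defined current and its $\Gamma$-slices exist and behave well for a.e.\ $\gamma$. The bare axioms (1)--(3) do not obviously supply such control: the support of a $(d-1)$-dimensional integral current, although its carrier has finite $\mathcal H^{d-1}$ measure, may have infinite $(d-1)$-Minkowski content and even positive Lebesgue measure, so a priori a set could admit a normal cycle for which the generic-intersection construction degenerates. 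The real work is thus to show that the igregularity axioms secretly force the normal cycle of any member to have support of locally finite $(d-1)$-Minkowski content --- perhaps by exploiting axiom (3) for intersections with large balls, if these can be shown to belong to the class --- or else to carry out the amalgamation by a compactness argument purely at the level of currents. Failing either, the honest fallback is to build the Minkowski-content support condition into the definition of an igregularity class, after which the conjecture becomes a theorem by the argument above.
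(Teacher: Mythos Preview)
This statement is a \emph{conjecture} in the paper, not a theorem: it appears in Section~5 (``Questions and conjectures'') with no proof offered, so there is no argument in the paper to compare your proposal against.

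That said, your reduction is structurally sound and your self-diagnosis is accurate. The Zorn-type framework is fine, and you are right that everything collapses to the amalgamation property, which in turn collapses to the ``abstract strengthening of Theorem~\ref{thm:J=N reg}'' you isolate: that for \emph{any} two compact sets admitting normal cycles, $\mathcal J(A,B,\gamma)$ is the normal cycle of $A\cap\gamma B$ for a.e.\ $\gamma$. But this abstract statement is not a lemma on the way to the conjecture --- it essentially \emph{is} the conjecture, or at any rate is of comparable depth. The paper's proof of Theorem~\ref{thm:J=N reg} goes through Theorem~\ref{WDCbundle} and Proposition~\ref{prop:max dc}, both of which use the DC structure in an essential way (via auras and the Ewald--Larman--Rogers refinement); none of that machinery is available from axioms (1)--(3) alone. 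Your observation that a $(d-1)$-dimensional integral current can have support of positive Lebesgue measure is exactly the obstruction, and your proposed escape routes (bootstrapping the Minkowski-content bound from axiom~(3), or arguing purely at the level of currents) are speculative --- note in particular that an arbitrary igregularity class need not contain balls, so the first route has no obvious starting point.

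In short: you have correctly located the difficulty, but what remains is the whole problem. Your final ``honest fallback'' --- strengthening the axioms to include the support condition --- would indeed turn the conjecture into a theorem by your argument, but that changes the statement rather than proving it.
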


This conjecture may be sharpened as follows. Consider the class $\mathcal N$ of compact sets $A$ with the property that
there exists a monotone sequence $M_1\supset M_2 \supset $ of compact smooth domains $\bigcap_n M_n=  A$, where the masses of $N_{M_n}$ are bounded by a fixed constant.

\begin{conjecture} $\mathcal N$ is an igregularity class.
\end{conjecture}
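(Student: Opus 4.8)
The plan is to verify, directly for $\mathcal N$, the three igregularity axioms; only the intersection axiom is substantial, and even it reduces --- modulo one genuinely new inequality --- to routine current-theoretic limiting arguments. It is convenient first to note that $\mathcal N$ is unchanged if ``compact smooth domains'' is replaced by ``compact sets of positive reach'' in its definition: any set $P$ of positive reach is the decreasing intersection of the $C^{1,1}$ domains $\{d(\cdot,P)\le\eps\}$ for generic small $\eps>0$, whose normal cycles have masses bounded in terms of $\mass N(P)$, and these may be smoothed further, while a compact smooth domain is already of positive reach. For axiom (1), given $A\in\mathcal N$ with witnessing sequence $M_1\supset M_2\supset\cdots$, $\bigcap_nM_n=A$, $\mass N(M_n)\le C$, the $N(M_n)$ are closed integral cycles of uniformly bounded mass, all supported in the fixed compact $\pi\inv(M_1)\subset S^*\Rd$. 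By Federer--Fleming compactness a subsequence converges in flat norm to a closed integral cycle $T$, supported in $\pi\inv(M_1)$ and Legendrian (the constraint $\,\cdot\with\alpha=0$ survives flat limits of bounded mass). To identify $T$ as the normal cycle of $A$, independent of all choices, I would invoke the characterization of normal cycles by their index function: a closed Legendrian integral cycle in $S^*\Rd$ is determined by its index function, which for the smooth domain $M_n$ equals $\ind{M_n}$ and which depends continuously, valued in $L^1_{\loc}$, on the cycle in the flat topology over bounded-mass sets. Since $\ind{M_n}\downarrow\ind A$ in $L^1$, every subsequential limit has index function $\ind A$ a.e., so all such limits agree, the whole sequence converges, and we set $N(A):=T$; the same uniqueness makes this independent of the witnessing sequence.

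\textbf{Diffeomorphisms and transverse intersections.} If $\phi$ is a diffeomorphism of $\Rd$ and $M_n\downarrow A$, then $\phi(M_n)$ decreases to $\phi(A)$ through compact smooth domains with $N(\phi(M_n))=\tilde\phi_*N(M_n)$; since $\tilde\phi$ is Lipschitz on $\pi\inv(M_1)$ with some constant $L$, $\mass N(\phi(M_n))\le L^{d-1}C$, so $\phi(A)\in\mathcal N$ and, passing to the limit, $N(\phi(A))=\tilde\phi_*N(A)$: this is axiom (2). For axiom (3), let $A,B\in\mathcal N$ with witnessing sequences $M_n\downarrow A$, $L_n\downarrow B$. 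By Thom transversality applied to the \emph{countable} family $\{(\partial M_n,\partial L_n)\}_n$, for a.e.\ $\gamma\in\barsod$ the hypersurfaces $\partial M_n$ and $\gamma\,\partial L_n$ are transverse for every $n$; for such $\gamma$ each $P_n:=M_n\cap\gamma L_n$ has positive reach, $P_n\downarrow A\cap\gamma B$, and $N(P_n)=\mathcal J(M_n,L_n,\gamma)$ by \cite{cm}.

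\textbf{Identification of $N(A\cap\gamma B)$.} It remains to bound $\mass N(P_n)$ uniformly in $n$. I would use a ``positive'' kinematic inequality $\int_{\barsod}\mass N(M_n\cap\gamma L_n)\,d\gamma\le c(d)\,\mass N(M_n)\,\mass N(L_n)\le c(d)C^2$; by Fatou this forces $\liminf_n\mass N(M_n\cap\gamma L_n)<\infty$ for a.e.\ $\gamma$. Fixing such a $\gamma$ and a ($\gamma$-dependent) subsequence along which the mass stays bounded, $A\cap\gamma B$ appears as a decreasing intersection of positive-reach sets of uniformly bounded normal-cycle mass, hence lies in $\mathcal N$ by the first paragraph. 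Finally $N(A\cap\gamma B)=\lim_k\mathcal J(M_{n_k},L_{n_k},\gamma)$ by the uniqueness in axiom (1); since $N(M_n)\to N(A)$ and $N(L_n)\to N(B)$ flatly with bounded masses, and the fibre-product, slicing and pushforward operations defining $\mathcal J$ --- together with the restriction terms $N(A)\with\pi\inv(\gamma B)$, which pass to the limit because $\|N(A)\|(\pi\inv(\partial(\gamma B)))=0$ for a.e.\ $\gamma$ --- are all flat-continuous at a.e.\ $\gamma$, one obtains $\mathcal J(M_n,L_n,\gamma)\to\mathcal J(A,B,\gamma)$, so $N(A\cap\gamma B)=\mathcal J(A,B,\gamma)$ for a.e.\ $\gamma$.

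\textbf{Main obstacle.} The real difficulty is the kinematic mass bound $\int_{\barsod}\mass N(M\cap\gamma L)\,d\gamma\lesssim\mass N(M)\,\mass N(L)$: the classical formulas control the \emph{values} of the curvature measures of $M\cap\gamma L$, not their \emph{total variations}, and a near-tangential intersection can make $\mass N(M\cap\gamma L)$ arbitrarily large for a single $\gamma$; what is needed is a genuinely non-negative integral-geometric inequality, which I would try to extract by the methods of \cite{zahle} and \cite{fu94} --- e.g.\ from the kinematic formula for the $U_{PR}$-class applied to the variation measures $|\Phi_k|^{\var}$. Secondary, more technical, points are the flat-continuity of the entire $\mathcal J$-construction at a.e.\ $\gamma$ --- which in particular needs $\mathcal L^d(\partial A)=0$ for $A\in\mathcal N$, itself a consequence of the bounded-mass hypothesis --- and the index-function stability behind the uniqueness of $N(A)$ in axiom (1).
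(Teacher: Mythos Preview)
This statement is posed in the paper as an open \emph{conjecture}; there is no proof to compare against. Immediately after stating it the authors write: ``It is not even known whether every element of $\mathcal N$ admits a normal cycle. Simple examples show that it is not possible to obtain $N(A)$ as a subsequential limit of the $N(M_n)$.'' Your construction for axiom~(1) is exactly the approach they are ruling out: extract a flat-subsequential limit $T$ of the $N(M_n)$ and then argue via an index-function uniqueness principle that all such limits coincide and deserve the name $N(A)$. The authors instead suggest that the correct $N(A)$, if it exists, should arise only after some ``pruning procedure'' applied to such a limit.

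The specific weak point is the identification step. Your assertion that the index function ``depends continuously, valued in $L^1_{\loc}$, on the cycle in the flat topology over bounded-mass sets'' is doing all of the work, and it is not justified: the local index is defined through slicing, which is notoriously discontinuous under flat limits, and agreement $\ind{M_n}\to\ind A$ in $L^1$ says nothing about Legendrian pieces of $T$ sitting over $\Ha^d$-null subsets of $\Rd$ --- precisely the kind of extra mass the ``pruning'' remark anticipates. Note too that the support/content hypotheses whose verification is the technical core of this paper (Theorem~\ref{WDCbundle}) are exactly what one would want, and does not automatically have, for an arbitrary flat limit. Finally, you yourself flag the kinematic mass inequality behind axiom~(3) as the ``main obstacle'' and do not prove it; so even on its own terms the proposal is a program, not a proof.
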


It is not even known whether every element of $\mathcal N$ admits a normal cycle. Simple examples show that it is not possible to obtain $N(A)$ as a subsequential limit of the $N(M_n)$. However, on naive geometric grounds it is natural to suppose that $N(A)$ could be constructed by some kind of pruning procedure from such a subsequential limit.

\begin{conjecture} $\mathcal N$ is the unique maximal  igregularity class of subsets of $\Rd$.
\end{conjecture}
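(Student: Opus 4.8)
The statement splits into two assertions: (I) that $\mathcal N$ is itself an igregularity class (the immediately preceding conjecture), and (II) that every igregularity class $\mathcal C$ satisfies $\mathcal C \subseteq \mathcal N$. Granting both, $\mathcal N$ is igregular and contains every igregular class, hence is the largest such class; in particular it is maximal, and any maximal class $\mathcal C$ must equal $\mathcal N$ (being igregular it lies in $\mathcal N$, and maximality forbids proper containment), which is exactly uniqueness. So the plan is to establish (I) and (II) separately, with (II) carrying the essential difficulty.

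For (I) the first task is to manufacture a normal cycle for an arbitrary $A \in \mathcal N$. Given a witnessing sequence $M_1 \supset M_2 \supset \cdots$ of smooth domains with $\bigcap_n M_n = A$ and $\sup_n \mass N(M_n) < \infty$, the Federer--Fleming compactness theorem (\cite{gmt}, 4.2.17) yields a subsequence along which the Legendrian cycles $N(M_n)$ converge in the flat topology to some $T \in \mathbb I_{d-1}(S^*\Rd)$. As the authors observe, $T \neq N(A)$ in general: surplus sheets can persist over points where the approximating boundaries accumulate. The plan is to define $N(A)$ from $T$ by a canonical pruning, selecting over each base point the Legendrian fibre contribution prescribed by the local linking index of $A$ against small cospheres (a pointwise Gauss--Bonnet normalization), and to verify that the result is independent of the witnessing sequence. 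Diffeomorphism invariance (axiom 2) is then immediate from the functoriality of the construction under $\tilde\phi_*$, while the kinematic formula (axiom 3) would follow by passing the formula of Theorem~\ref{thm:main}, valid for the smooth $M_n$, to the limit: one intersects with a fixed generic $\gamma B$, invokes a Proposition~\ref{prop:max dc}-type transversality to ensure that no mass of $N(M_n)\times N(B)\times_{\mathcal E}\mathbf C$ escapes under the $\Gamma$-slice, and checks that $\mathcal J(M_n,B,\gamma) \to \mathcal J(A,B,\gamma)$ in the flat norm.

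For (II), let $\mathcal C$ be an igregularity class and $A \in \mathcal C$, so that $A$ carries a normal cycle $N(A)$ of locally finite mass. We must produce a monotone decreasing sequence of smooth domains shrinking to $A$ with uniformly bounded normal-cycle mass. The natural route is to regularise $\delta_A := \dist(\cdot,A)$ to a $C^\infty$ function $u_\eps$ with $u_\eps \to \delta_A$, and to take $M_n := u_{\eps_n}\inv([0,s_n])$ for regular values $s_n \downarrow 0$ (Sard), arranging by a diagonal choice of $(\eps_n,s_n)$ that $M_{n+1}\subset M_n$ and $\bigcap_n M_n = A$. Since $s_n$ is regular, $N(M_n)$ is the honest slice of the differential cycle $\D(u_{\eps_n})$ at level $s_n$, exactly as in \eqref{eq:cone 2}, and the coarea/slicing mass inequality (\cite{gmt}, 4.3.1) lets us choose $s_n$ so that $\mass N(M_n) \le 2\,\delta_n\inv\,\mass\bigl(\D(u_{\eps_n})\with\{0<u_{\eps_n}<\delta_n\}\bigr)$. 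Thus the whole matter reduces to the uniform estimate $\mass\bigl(\D(u_{\eps_n})\with\{0<u_{\eps_n}<\delta_n\}\bigr) = O(\delta_n)$, which must be extracted from $\mass N(A)$: morally $N(A)$ records the total curvature of $A$, and a tube-type (Weyl--Steiner) argument should bound the differential-cycle mass over the thin shell $\{0<\delta_A<\delta\}$ by a constant multiple of $\delta$.

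The principal obstacle is precisely this uniform bound, equivalently an approximation theorem of independent interest: every compact set carrying a normal cycle admits a monotone outer smooth approximation of uniformly bounded total curvature, the analogue for sets of bounded total curvature of the strict approximation of BV functions by smooth functions of comparable variation. No curvature-sensitive, monotonicity-preserving mollification with simultaneous control of all Lipschitz--Killing curvatures is presently available, and this is where genuinely new technique is required. I expect (I) to be comparatively tractable once the limit pruning is set up, whereas (II)---and with it the maximality half of the conjecture---is the deep point. It is worth noting that (II) is tightly linked to Question 2 above: were $\delta_A$ known to be a DC aura for every $A \in \mathcal N$, the level-set construction would embed $A$ into the WDC theory automatically and the bound would follow from Theorem~\ref{WDCbundle} together with Lemma~\ref{lem:box}\,(\ref{item:mink prod}); the absence of such regularity for members of an abstract igregularity class is exactly what renders the uniform mass estimate elusive.
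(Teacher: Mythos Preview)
The paper contains no proof of this statement: it is explicitly listed as an open conjecture in Section~5, alongside the weaker Conjecture that $\mathcal N$ is an igregularity class at all. The paper remarks that ``it is not even known whether every element of $\mathcal N$ admits a normal cycle'' and that ``it is not completely clear whether \WDC~ sets belong to $\mathcal N$.'' There is therefore nothing to compare your proposal against.

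Your proposal is a research outline, not a proof, and you are candid about this (``genuinely new technique is required''). The decomposition into (I) and (II) is the natural one, and you correctly locate the two hard points: the pruning of a subsequential flat limit to produce $N(A)$ for (I), and the uniform tube-mass estimate for (II). But both gaps are genuine and wide open. For (I), the paper itself flags the pruning step as the obstacle (``it is natural to suppose that $N(A)$ could be constructed by some kind of pruning procedure''), and no mechanism is known. For (II), note that the paper's own main result furnishes a concrete igregularity class---the compact \WDC~ sets---and yet the authors state they do not know whether \WDC~ sets lie in $\mathcal N$. Thus even the first nontrivial test case of your inclusion $\mathcal C\subset\mathcal N$ is unresolved. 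Your proposed route via mollifying $\delta_A$ and invoking coarea reduces to exactly the missing estimate; the appeal to Question~2 (is $\delta_A$ a \DC~ aura?) is apt, but that question is likewise open, so the reduction does not close the gap.

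In short: your plan is reasonable as a map of the territory, but neither (I) nor (II) is established, and the paper offers no proof because none is known.
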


However, it is not completely clear whether  WDC sets belong to $\mathcal N$.

It also seems possible that the approach of \cite{fu94} might  be made to work in greater generality, as suggested by the following sample statement:

\begin{conjecture} Let $f \in \MA(\Rd)$. Then $f\inv(-\infty,c] \in \mathcal N$ for a.e. $c \in \R$.
\end{conjecture}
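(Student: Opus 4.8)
The plan is to reduce the statement to the main result of the paper, Theorem B together with Lemma~\ref{duality}, by showing that the sublevel sets $\{f \le c\}$ are \WDC~ for a.e.\ $c$. Since $\mathcal N$ properly contains every \WDC~ set (a claim that itself requires the monotone approximation constructed below), it is enough to exhibit, for a.e.\ $c$, a \DC~ aura for $f\inv(-\infty,c]$. The first difficulty is that a general $\MA$ function is not $\DC$, so we cannot simply invoke Theorem~\ref{thm:dc is ma}; instead we must work directly with the differential cycle $\D(f)$ and the associated overestimate of $\spt\D(f)$.

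The key steps, in order, are as follows. First I would establish a Sard-type statement: for a.e.\ $c\in\R$, the value $c$ is weakly regular for $f$ in the sense of Definition~\ref{def:weakly reg}. This is plausible because the ``bad'' set --- the projection to $\R$ of the set of critical values where $df$ (or a Clarke subgradient) vanishes on the level set --- should have measure zero for any $\MA$ function; one argues via the finiteness axiom (3) for $\D(f)$, slicing $\D(f)$ by $\pi$ to recover $\delta_{(x,df(x))}$ for a.e.\ $x$ (axiom $(4')$), and then slicing by $\ell\circ\proj$ along $\spt\D(f)$ to see that the set of $c$ for which the graph of $df$ accumulates at the zero section over $f\inv(c)$ is null. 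Second, given such a weakly regular $c$, I would reconstruct the conormal cycle $N^*(f\inv(-\infty,c])$ directly from $\D(f)$ by the formula $N^*:=\proj_*\partial(\D(f)\with\pi\inv f\inv(-\infty,c])$, exactly as in \eqref{eq:def nf}, and verify via the weak-regularity hypothesis (which keeps the relevant part of $\spt\D(f)$ away from the zero section) that this is a genuine integral current with the properties of Theorem~\ref{thm:properties of n*}; this is the content of \cite{fu94}, Theorem~3.2, whose hypotheses are satisfied here. Third, I would construct the monotone approximating sequence $M_1\supset M_2\supset\cdots$ with $\bigcap_n M_n = f\inv(-\infty,c]$ and uniformly bounded normal-cycle mass: take $M_n := f\inv(-\infty,c+1/n]$ after a small smoothing, or more robustly, mollify $f$ and perturb $c$ slightly to get smooth domains; the uniform mass bound follows from a uniform bound on $\mass(\D(f)\with\pi\inv(K)\cap\ell\inv[0,R])$ near the level set, which in turn is exactly the kind of estimate supplied (in the $\DC$ case) by Lemma~\ref{duality}.

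The main obstacle is precisely this last uniform mass bound in the genuinely $\MA$ (non-$\DC$) case. Lemma~\ref{duality} and Theorem~\ref{WDCbundle} give finite $(d-1)$-content of $\noreps f$ only because of the Pavlica--Zaj\'i\v cek duality and the Ewald--Larman--Rogers estimate, both of which are intrinsically convex-analytic and hence available only for $\DC$ functions. For a general $\MA$ function one has only axiom (3), which bounds the mass of $\D(f)$ over compact sets but says nothing uniform as the level set is approached from above, nor anything about $(d-1)$-content of the normal cycles of the approximants. So the honest version of this conjecture would require either (i) a new a~priori mass estimate for $N(f\inv(-\infty,c])$ in terms of $\D(f)$, valid for a.e.\ $c$ --- perhaps obtainable by a coarea/slicing argument bounding $\int \mass(N(f\inv(-\infty,c]))\,dc$ by $\mass(\D(f)\with\pi\inv K\cap\ell\inv[0,R])$ --- or (ii) settling the prior question of whether membership in $\mathcal N$ even requires such a bound. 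I would pursue route (i): slice the finite-mass current $\D(f)\with(\pi\inv K\cap\ell\inv[\eps,R])$ by the composite map $(f\circ\pi, \ell): \spt\D(f)\to\R\times\R$ and integrate over $c$, using the coarea inequality for slices (\cite{gmt}, 4.3) to extract, for a.e.\ $c$, a slice of finite mass whose $\proj$-pushforward is a carrier for $N^*(f\inv(-\infty,c])$ of controlled mass; iterating over an exhaustion $K\uparrow\Rd$, $R\uparrow\infty$, $\eps\downarrow0$ and a shrinking family of smooth domains pinched between consecutive level sets then yields the required monotone sequence with bounded normal-cycle masses, placing $f\inv(-\infty,c]$ in $\mathcal N$ for a.e.\ $c$.
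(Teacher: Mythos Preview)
This statement is a \emph{conjecture} in the paper, not a theorem: it appears in Section~5 (``Questions and conjectures'') and the paper offers no proof. So there is nothing in the paper to compare your argument against; the relevant question is simply whether your proposal succeeds on its own terms. It does not, and in fact you largely acknowledge this yourself.

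There are several concrete gaps. First, your opening reduction ``Since $\mathcal N$ properly contains every \WDC~ set'' is exactly what the paper says is \emph{not} known: see the sentence immediately following the third conjecture in Section~5 (``it is not completely clear whether \WDC~ sets belong to $\mathcal N$''). So even the \DC~ special case of the conjecture is open, and you cannot use it as a black box. Second, your Sard step and your use of $\noreps f$ and the Clarke differential presuppose that $f$ is locally Lipschitz; but a general $\MA$ function is only assumed to lie in $W^{1,1}_{\mathrm{loc}}$ (see the definition at the start of \S2.5), so Definition~\ref{def:weakly reg} and Lemma~\ref{lem:support df} are not directly available. Third, your proposed coarea argument slices $\D(f)$ by $f\circ\pi$; for this to fall under \cite{gmt}, \S4.3, the slicing map must be Lipschitz, which again fails for general $\MA$ functions. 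Fourth, $\mathcal N$ is a class of \emph{compact} sets, while $f^{-1}(-\infty,c]$ need not be compact for $f\in\MA(\Rd)$; you never address this.

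Most importantly, the step you flag as ``the main obstacle'' --- a uniform mass bound on the normal cycles of the approximating smooth domains $M_n$ --- is the entire content of the conjecture. Your route~(i) does not actually produce such a bound: even if the slicing worked, it would give finite mass for the conormal cycle of $f^{-1}(-\infty,c]$ for a.e.\ $c$, not a \emph{uniform} bound on a sequence of smooth outer approximants $M_n$ shrinking to that set. The passage from ``$N^*(f^{-1}(-\infty,c])$ has finite mass'' to ``there exist smooth $M_n\downarrow f^{-1}(-\infty,c]$ with $\sup_n\mass N(M_n)<\infty$'' is exactly the missing idea, and nothing in the paper (nor in \cite{fu94}) supplies it.
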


\subsection{The Weyl tube principle} Until this point we have not mentioned one of the most striking phenomena of integral geometry, often referred to as ``the Weyl tube formula": if $M \subset \Rd$ is a smoothly embedded submanifold then the Federer curvature measures of $M$ are Riemannian invariants. In particular, these measures may be constructed from the structure of $M$ as an inner metric space. It is easy to show that the latter formulation holds also if $M$ is a (not necessarily smooth) compact convex set. 

Does it hold also for general WDC sets? This is not known even in the semiconvex case, nor for the case of boundaries of convex sets. The sole exception in the latter framework is the case where the ambient dimension $d= 3$, in which case the boundary of a closed convex set $A \subset \R^3$ is known to be a ``manifold of bounded curvature" in the sense of Alexandrov (cf. \cite{resh}).

The \WDC~ case presents a still more primitive obstacle: we do not know whether a general connected \WDC~ set $A$ is always a length space, i.e. whether any two points $x,y \in A$ are always joined by a rectifiable path $\subset A$. This would follow if we knew that such $A$ is a Lipschitz neighborhood retract in the sense of \cite{gmt}; although the proof of  Proposition 1.2 of \cite{fu94} assumes that this is so, this assertion is unjustified at present.

\end{document}